\newtheorem{theorem}{Theorem}[section]
\newtheorem{lemma}[theorem]{Lemma}
\newtheorem{definition}[theorem]{Definition}
\newtheorem{proposition}[theorem]{Proposition}
\newtheorem{corollary}[theorem]{Corollary}
\DeclareMathOperator\USp{USp}
\DeclareMathOperator\rk{rk}
\newcommand{\eps}{\epsilon}
\newcommand{\bbC}{\mathbb{C}}
\newcommand{\bbD}{\mathbb{D}}
\newcommand{\bbZ}{\mathbb{Z}}
\newcommand{\bbN}{\mathbb{N}}
\newcommand{\calA}{\mathcal{A}}
\newcommand{\calD}{\mathcal{D}}
\newcommand{\calE}{\mathcal{E}}
\newcommand{\calR}{\mathcal{R}}
\newcommand{\del}{\partial}
\newcommand{\tr}{\mathrm{tr}}
\newcommand{\End}{\mathrm{End}}
\newcommand{\Hom}{\mathrm{Hom}}
\newcommand{\diag}{\mathrm{diag}}
\newcommand{\laa}{\mathfrak{a}}
\newcommand{\lag}{\mathfrak{g}}
\newcommand{\lah}{\mathfrak{h}}
\newcommand{\lak}{\mathfrak{k}}
\newcommand{\lam}{\mathfrak{m}}
\newcommand{\lan}{\mathfrak{n}}
\newcommand{\laq}{\mathfrak{q}}
\newcommand{\lat}{\mathfrak{t}}
\newcommand{\laso}{\mathfrak{so}}
\newcommand{\laZ}{\mathfrak{Z}}
\newcommand{\SO}{\mathrm{SO}}
\newcommand{\Spin}{\mathrm{Spin}}
\newcommand{\SU}{\mathrm{SU}}
\newcommand{\U}{\mathrm{U}}
\newcommand{\Ad}{\mathrm{Ad}}
\newcommand{\sph}{\mathrm{sph}}
\newcommand{\reg}{\mathrm{reg}}
\newcommand{\F}{\mathrm{F}}
\newcommand{\G}{\mathrm{G}}
\newcommand{\R}{{\mathbb R}}
\newcommand{\N}{{\mathbb N}}
\newcommand{\C}{{\mathbb C}}
\newcommand{\OO}{{\mathbb O}}
\newcommand{\Pro}{{\mathbb P}}
\newcommand{\Z}{{\mathbb Z}}
\newcommand{\UH}{{\mathbb H}}
\newcommand{\Sph}{{\mathbb S}}
\numberwithin{equation}{section}
\begin{document}

\title{Matrix Valued Orthogonal Polynomials for Gelfand Pairs of Rank One}

\author
{Gert Heckman and Maarten van Pruijssen}

\date{\today}

\maketitle

\begin{abstract}
In this paper we study matrix valued orthogonal polynomials of one variable associated with
a compact connected Gelfand pair $(G,K)$ of rank one, as a generalization of earlier work
by Koornwinder \cite{Koornwinder} and subsequently by Koelink, van Pruijssen and Roman \cite{Koelink--van Pruijssen--Roman1},
\cite{Koelink--van Pruijssen--Roman2} for the pair $(\SU(2)\times\SU(2),\SU(2))$, and by
Gr\"unbaum, Pacharoni and Tirao \cite{Grunbaum--Pacharoni--Tirao1} for the pair $(\SU(3),\U(2))$.
Our method is based on representation theory using an explicit determination of the relevant branching rules.
Our matrix valued orthogonal polynomials have the Sturm--Liouville property of being eigenfunctions
of a second order matrix valued linear differential operator coming from the Casimir operator, and in fact
are eigenfunctions of a commutative algebra of matrix valued linear differential operators coming from $U(\mathfrak{g}_{c})^K$.
\end{abstract}

\newpage
\tableofcontents

\newpage

\section{Introduction}\label{intro}

For $N=1,2,3,\cdots$ a fixed positive integer let $\mathbb{M}$ denote the associative algebra
of square matrices of size $N\times N$ with complex entries.
Denote by $\mathbb{M}[x]$ the associative algebra of matrix valued polynomials.
A matrix valued weight function $W$ on some open interval $(a,b)$, with $-\infty\leq a<b\leq \infty$,
assigns to each $x\in(a,b)$ a selfadjoint matrix $W(x)\in\mathbb{M}$ (so $W(x)^{\dagger}=W(x)$),
which is positive definite (denoted $W(x)>0$) almost everywhere on $(a,b)$,
such that the matrix valued moments
\[ \int_a^b x^n W(x)dx \]
are finite (and selfadjoint) for all $n\in\N$.
Such a weight function defines a sesquilinear matrix valued form
\[ \langle P,Q\rangle= \int_a^b P^{\dagger}(x)W(x)Q(x)dx \]
on the polynomial algebra $\mathbb{M}[x]$.
Sesquilinear in the convention of this paper amounts to antilinear in the first and linear in the second argument.
The additional properties
\[ \langle PA,Q\rangle=A^{\dagger}\langle P,Q\rangle\;,\;\langle P,QA\rangle=\langle P,Q\rangle A\;,\;
   \langle P,Q\rangle^{\dagger}=\langle Q,P\rangle \]
for all $A\in\mathbb{M}$ and $P,Q\in\mathbb{M}[x]$ are trivially checked, while
\[ \langle P,P\rangle\geq0\;,\;\langle P,P\rangle=0\Leftrightarrow P=0 \]
holds for all $P\in\mathbb{M}[x]$, since $\{A\in\mathbb{M};A^{\dagger}=A,A\geq0\}$ is a convex cone,
and for $A$ in this cone $A=0\Leftrightarrow\tr{A}=0$.
Observe that $\langle P,P\rangle>0$ as soon as $\det P(x)\neq0$ at some point $x\in(a,b)$.

We can apply the Gram--Schmidt orthogonalization process to the
(right module for $\mathbb{M}$) basis $\{x^n;n\in\N\}$ of $\mathbb{M}[x]$.
By induction on $n$ we can define monic matrix valued polynomials $M_n(x)$ of degree $n$ by
\[ M_n(x)=x^n+\sum_{m=0}^{n-1}M_m(x)C_{n,m}\;,\;\langle M_m(x),x^n \rangle+\langle M_m(x),M_m(x)\rangle C_{n,m}=0 \]
for all $m<n$. Indeed, the matrix $C_{n,m}$ can be solved, because $\langle M_m,M_m\rangle>0$ and hence is invertible.
Since $\langle M_m,M_n\rangle=0$ for $m\neq n$ by construction any matrix valued polynomial $P(x)$ has a unique expansion
\[ P(x)=\sum_n M_n(x)C_n\;,\;\langle M_n,P \rangle=\langle M_n,M_n\rangle C_n \]
in terms of the basis $\{M_n;n\in\N\}$ of the monic orthogonal matrix valued polynomials.
The theory of matrix valued orthogonal polynomials was initiated by Krein \cite{Krein1}, \cite{Krein2},
and further developped by Geronimo \cite{Geronimo}, Duran \cite{Duran}, Gr\"{u}nbaum and Tirao \cite{Grunbaum--Tirao} and others.

In the scalar case $N=1$ with a non negative weight function $w(x)$ on the interval $(a,b)$
the system of monic orthogonal polynomials $p_n(x)$ has been the subject of an extensive study
in mathematical analysis over the past two centuries \cite{Szego}.
The classical orthogonal polynomials with weight functions
\[ w(x)=e^{-x^2/2}\;,\;w(x)=x^{\alpha}e^{-x}\;,\;w(x)=(1-x)^{\alpha}(1+x)^{\beta} \]
on the intervals $(-\infty,\infty)$, $(0,\infty)$, $(-1,1)$ for $\alpha,\beta>-1$
give rise to the Hermite, Laguerre and Jacobi polynomials respectively.
These three classes of orthogonal polynomials $p_n(x)$ are also eigenfunctions with eigenvalue $\lambda_n$
of a second order differential operator. Orthogonal polynomials with this additional
Sturm--Liouville property were characterized by Bochner \cite{Bochner},
who found besides the classical examples certain polynomials related to the Bessel function $J_{n+\frac12}(x)$.

In the matrix setting $N\geq1$ the question studied by Bochner was taken up by Duran \cite{Duran}
and further studied by Duran and Gr\"{u}nbaum \cite{Duran--Grunbaum}, and Gr\"{u}nbaum and Tirao \cite{Grunbaum--Tirao},
but a full list of matrix valued weight functions $W(x)$ with the Sturm--Liouville property
seems to be out of reach until now. Examples of matrix valued orthogonal polynomials
with the Sturm--Liouville property have been found using harmonic analysis for compact Gelfand pairs,
notably for the example $(\SU(2)\times\SU(2),\SU(2))$ (diagonally embedded) by Koornwinder \cite{Koornwinder}
and by Koelink, van Pruijssen and Rom\'{a}n \cite{Koelink--van Pruijssen--Roman1}, \cite{Koelink--van Pruijssen--Roman2},
and for the example $(\SU(3),\U(2))$ by Gr\"{u}nbaum, Pacheroni and Tirao
\cite{Grunbaum--Pacharoni--Tirao1}, \cite{Grunbaum--Pacharoni--Tirao2}.

The main goal of this paper is a uniform construction of a class of matrix valued orthogonal polynomials
with the Sturm--Liouville property, obtained using harmonic analysis for compact Lie groups.
More specifically, let $G$ be a compact connected Lie group, $K$ a closed connected subgroup
and $F$ a non empty face of the cone $P^+_K$ of dominant weights of $K$.
We say that $(G,K,F)$ is a multiplicity free system if for each irreducible representation $\pi^K_{\mu}$
of $K$ with highest weight $\mu\in F$ the induced representation $\mathrm{Ind}_K^G(\pi^K_{\mu})$
decomposes into a direct sum of irreducible representations $\pi^G_{\lambda}$ of G with highest weight $\lambda$,
with multiplicities
\[ m^{G,K}_{\lambda}(\mu)=[\pi^G_{\lambda}:\pi^K_{\mu}]\leq1 \]
for all $\lambda\in P_G^+$.
A necessary condition for $(G,K,F)$ to be a multiplicity free system is that the triple
$(G,K,\{0\})$ is multiplicity free, which is equivalent to $(G,K)$ being a Gelfand pair. 

Henceforth, in this paper we shall assume that $(G,K)$ is a Gelfand pair of rank one.
The classification of such pairs is known from the work of Kr{\"a}mer \cite{Kramer} and Brion \cite{Brion1}.
The space $G/K$ is either a sphere $\Sph^n$ or a projective space $\Pro^n(\F)$
with $n\geq2$ for $\F=\R,\C,\UH$ and $n=2$ for $\F=\OO$.
If $G$ is the maximal connected group of isometries, then $(G,K)$ is a symmetric pair of rank one.
In addition there are two exceptional spheres $\Sph^7=\Spin(7)/\mathrm{G}_2$ and $\Sph^6=\mathrm{G}_2/\SU(3)$,
which are acted upon in a distance transitive way, and so the corresponding pairs
$(G,K)$ are still Gelfand pairs of rank one.
The homogeneous spaces $G/K$ are precisely the distance regular spaces as found by Wang \cite{Wang}.
For $(G,K)$ a rank one Gelfand pair the classification of multiplicity free triples
$(G,K,F)$ is given by the following theorem.

\begin{theorem}\label{classification theorem}
The full list of multiplicity free rank one triples $(G,K,F)$ is given by the Table \ref{table: mfs}
\begin{table}
\begin{center}
\begin{tabular}{|l|l|l|l|r}
\hline
$G$ & $K$ & $\lambda_{\mathrm{sph}}$ & $\mathrm{faces}\;F$ \\ \hline
$\SU(n+1)$ & $\mathrm{S}(\mathrm{U}(n)\times\mathrm{U}(1))$ & $\varpi_1+\varpi_n$ & $\mathrm{any}$ \\ \hline
$\SO(2n+1)$ & $\SO(2n)$ & $\varpi_1$ & $\mathrm{any}$ \\ \hline
$\SO(2n)$ & $\SO(2n-1)$ & $\varpi_1$ & $\mathrm{any}$ \\ \hline
$\USp(2n)$ & $\USp(2n-2)\times\USp(2)$ & $\varpi_2$ & $\rk{F}\leq2$ \\ \hline
$\mathrm{F}_4$ & $\Spin(9)$ & $\varpi_1$ & $\rk F\le 1$ $\mathrm{or}$ \\
&&&$F=\N\omega_1+\N\omega_2$ \\ \hline
$\Spin(7)$ & $\mathrm{G}_2$ & $\varpi_3$ & $\rk{F}\leq1$ \\ \hline
$\mathrm{G}_2$ & $\SU(3)$ & $\varpi_1$ & $\rk{F}\leq1$ \\ \hline
\end{tabular}
\caption{Multiplicity free systems.}\label{table: mfs}
\end{center}
\end{table}
In the third column we have given the highest weight $\lambda_{\mathrm{sph}}\in P_G^+$ of the fundamental
zonal spherical representation in the notation for root systems of Knapp \cite{Knapp},
except for case $(G,K)=(\SO(4),\SO(3))$ that $G$ is not simple and $\lambda_{\mathrm{sph}}=\varpi_1+\varpi_2\in P_G^+=\N\varpi_1+\N\varpi_2$. Observe that $\lambda_{\mathrm{sph}}$ is a primitive vector in $P^{+}_{G}$.
\end{theorem}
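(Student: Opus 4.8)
The plan is to reduce the statement to an explicit analysis of branching multiplicities. By Frobenius reciprocity, $m^{G,K}_{\lambda}(\mu)=[\pi^G_{\lambda}:\pi^K_{\mu}]$ is the multiplicity of the $K$-type $\pi^K_{\mu}$ in the restriction $\pi^G_{\lambda}|_K$, so $(G,K,F)$ is a multiplicity free system precisely when, for every $\mu\in F$, the $K$-type $\pi^K_{\mu}$ occurs at most once in each irreducible $G$-representation $\pi^G_{\lambda}$. This condition is inherited by subfaces: if $F'\subseteq F$ is a face of $P^+_K$ contained in $F$, then every $\mu\in F'$ already lies in $F$, so $F'$ is multiplicity free whenever $F$ is. Hence the multiplicity free faces form an order ideal in the face lattice of $P^+_K$, and it suffices to identify the \emph{maximal} multiplicity free faces in each case. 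Since the faces of $P^+_K$ correspond to subsets of the fundamental weights of $K$, with $\rk F$ the number of generating rays, the four entries of the last column (``any'', $\rk F\leq 2$, $\rk F\leq 1$, and $\rk F\leq 1$ together with the exceptional face $\N\omega_1+\N\omega_2$) are exactly the maximal admissible faces. Because the rank one Gelfand pairs $(G,K)$ are already classified, the proof proceeds case by case through the seven families.

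For the classical families $(\SU(n+1),\mathrm{S}(\U(n)\times\U(1)))$, $(\SO(2n+1),\SO(2n))$, $(\SO(2n),\SO(2n-1))$ and $(\USp(2n),\USp(2n-2)\times\USp(2))$, I would write down $m^{G,K}_{\lambda}(\mu)$ from the classical branching laws: Gelfand--Tsetlin interlacing for the unitary and orthogonal restrictions, and the lattice-point count for the symplectic restriction. In the complex projective and sphere cases the restriction $\pi^G_{\lambda}|_K$ is multiplicity free for every $\lambda$, which immediately yields that every face $F$ is admissible. In the quaternionic case $\USp(2n)\downarrow\USp(2n-2)\times\USp(2)$ the multiplicities genuinely exceed one; from the lattice-point description one reads off that $m^{G,K}_{\lambda}(\mu)\leq 1$ for all $\lambda$ exactly when the support of $\mu$ is confined to a face of rank at most two, and a weight $\mu$ spanning a rank three face is exhibited as occurring with multiplicity two in a suitable $\pi^G_{\lambda}$.

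For the exceptional pairs $(\mathrm{F}_4,\Spin(9))$, $(\Spin(7),\mathrm{G}_2)$ and $(\mathrm{G}_2,\SU(3))$ there are no off-the-shelf interlacing patterns, so I would compute the branching rules directly, for instance via the Weyl character formula or a Brauer--Klimyk style algorithm, or by decomposing tensor products with the small fundamental representations and organizing the result along the $\lambda_{\mathrm{sph}}$-string provided by the rank one structure. For the two exceptional spheres the target groups $\mathrm{G}_2$ and $\SU(3)$ are small enough that one checks multiplicity one already fails on each rank two edge, forcing $\rk F\leq 1$. The pair $(\mathrm{F}_4,\Spin(9))$ is the delicate one: here one must verify that all rank one faces are admissible, that the single two-dimensional face $\N\omega_1+\N\omega_2$ remains multiplicity free, and that every \emph{other} two-dimensional face, and a fortiori every face of higher rank, fails.

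The main obstacle is twofold. First, controlling the branching multiplicities in the symplectic and especially the $\mathrm{F}_4$ case, where $\pi^G_{\lambda}|_K$ is far from multiplicity free in general and the admissible faces are cut out by delicate arithmetic conditions. Second, establishing sharpness: for every face $F$ not contained in one of the listed maximal faces one must produce an explicit pair $(\lambda,\mu)$ with $\mu\in F$ and $m^{G,K}_{\lambda}(\mu)\geq 2$. I expect the isolation of the exceptional admissible face $\N\omega_1+\N\omega_2$ for $\mathrm{F}_4$, and the proof that it is genuinely maximal while all competing two-dimensional faces fail, to be the most demanding point.
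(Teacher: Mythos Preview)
Your plan for the \emph{sufficiency} direction---checking that each listed face is multiplicity free via explicit branching rules---is exactly what the paper does: the first three lines are classical (Weyl, Murnaghan), and the remaining four are worked out case by case in \S\S\ref{G2}, \ref{Spin7}, \ref{symplectic}, \ref{F4} using Kostant's multiplicity formula (Lemma~\ref{branching rule lemma}) and, for the symplectic case, Lepowsky's branching rule.

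Where your proposal diverges is on the \emph{necessity} direction (sharpness). You propose to produce, for each face $F$ not in the table, an explicit pair $(\lambda,\mu)$ with $m^{G,K}_\lambda(\mu)\ge 2$. The paper does not do this. In fact the paper explicitly says it proves the theorem ``only in one direction'' and defers the exclusion of the remaining faces to external references (\cite{van Pruijssen}, \cite{Brion1}, \cite{He et al}). More importantly, the underlying tool is different from your counterexample hunt: the paper invokes the equivalence (Proposition~\ref{prop: reducing to M}, proved via spherical varieties) that $(G,K,F)$ is multiplicity free if and only if $\pi^K_\mu|_M$ is multiplicity free for all $\mu\in F$, where $M=Z_K(T)$. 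This reduces the whole classification to the much smaller branching problem $K\downarrow M$, where the answer can be read off known tables of spherical subgroups (or, in the $\mathrm{F}_4$ case, the classification in \cite{He et al}). Your direct approach is in principle workable, but you have correctly identified the $\mathrm{F}_4$ case as the hardest point, and without the $K\downarrow M$ reduction the exclusion of the five non-admissible rank-two faces of $\Spin(9)$ would be a substantial computation that the paper avoids entirely.
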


The first three cases are well known through work of Weyl and Murnaghan \cite{Knapp}. In this paper we prove this theorem only in one direction, namely that all cases in the table give multiplicity free systems by working out the explicit branching rules in \S\S \ref{G2}, \ref{Spin7}, \ref{symplectic} and \ref{F4}. To exclude the case of the symplectic group with $\rk(F)\ge3$ we refer to \cite[Lem.~2.2.15]{van Pruijssen}, based on a result of Brion \cite[Prop.~3.1]{Brion1} or to \cite[Thm.~8.3]{He et al}.

The group $G$ for the two-point-homogeneous space $G/K$ admits a Cartan decomposition $G=KTK$ with $T\subset G$ a one dimensional torus with Lie algebra $\mathfrak{t}\subset\mathfrak{k}^{\perp}$. Denote $M=Z_{K}(T)$, the centralizer of $T$ in $K$. A triple $(G,K,F)$ is a multiplicity free system if and only if the restriction of $\pi^{K}_{\mu}$ to $M$ decomposes multiplicity free for all $\mu\in F$, which is proved in \cite[Prop.~2.2.9]{van Pruijssen} using the theory of spherical varieties. In the symmetric space examples this result goes back to Kostant and Camporesi \cite{Kostant, Camporesi1}.


For each of these triples $(G,K,F)$ we determine for all $\mu\in F$ the induced spectrum
\[ P^+_G(\mu)=\{\lambda\in P^+_G;m^{G,K}_{\lambda}(\mu)=1\} \]
explicitly through a case by case analysis.
We claim that if $\lambda\in P^+_G(\mu)$ then also $\lambda+\lambda_{\mathrm{sph}}\in P^+_G(\mu)$.
This can be derived from the Borel--Weil theorem.
Indeed, if $V^G_{\lambda}=H^0(G_c/B_c,L_{\lambda})$ denotes the Borel--Weil realization
of the finite dimensional representation of $G$ with highest weight $\lambda\in P_G^+$
then the intertwining projection
\[ V^G_{\lambda}\otimes V^G_{\lambda_{\mathrm{sph}}}\rightarrow V^G_{\lambda+\lambda_{\mathrm{sph}}} \]
onto the Cartan component of the tensor product is just realized
by the pointwise multiplication of holomorphic sections.

A spherical function of type $\mu\in F$ is a smooth map
$\Phi:G\rightarrow\End(V^K_{\mu})$ with the transformation rule
\begin{eqnarray}\label{trafo rule}
\Phi(kgk')=\pi^K_{\mu}(k)\Phi(g)\pi^K_{\mu}(k')
\end{eqnarray}
for all $g\in G$ and $k,k'\in K$.
The vector space $\mathcal{H}(G,K,\mu)$ of (say finite for $G$ on the left and the right)
spherical functions of type $\mu$ has a natural scalar valued Hermitian inner product
\[ \langle\Phi,\Phi'\rangle=\int_G\tr(\Phi(g)^{\dagger}\Phi'(g))dg \]
with the dagger coming from the (unique up to positive scalar) unitary structure on $V^K_{\mu}$, and $dg$ the normalized Haar measure on $G$.
Because $(G,K,F)$ is a multiplicity free system the elementary spherical functions
$\Phi^{\mu}_\lambda$ indexed by $\lambda\in P_G^+(\mu)$ form a basis for $\mathcal{H}(G,K,\mu)$,
which is orthogonal,
\[ \langle\Phi^{\mu}_{\lambda},\Phi^{\mu}_{\lambda'}\rangle=
   \frac{(\dim\mu)^2}{\dim\lambda}\delta_{\lambda,\lambda'}, \]
as a consequence of the Schur orthogonality relations.

With $\phi=\phi_{\mathrm{sph}}$ the fundamental zonal spherical function of $(G,K)$, the product $\phi\Phi^{\mu}_{\lambda}$ is again a spherical function of type $\mu$, and therefore has an expansion
\begin{eqnarray}\label{eqn: expansion}
 \phi\Phi^{\mu}_{\lambda}=\sum_{\lambda'}c_{\lambda,\lambda'}\Phi^{\mu}_{\lambda'}
\end{eqnarray}
with $\lambda'\in P_G^+(\mu)$.
For $\lambda,\lambda'\in P_G^+(\mu)$ the coefficient $c_{\lambda,\lambda'}=0$ unless
\begin{eqnarray}\label{eqn: inequality in well} \lambda-\lambda_{\mathrm{sph}}\preceq\lambda'\preceq\lambda+\lambda_{\mathrm{sph}},
\end{eqnarray}
where $\preceq$ is the usual partial ordering on $P^{+}_{G}$, and the leading coefficient $c_{\lambda,\lambda+\lambda_{\mathrm{sph}}}$ is non-zero. This allows one to define a degree $d:P^+_G(\mu)\rightarrow\N$ by
\[ d(\lambda+\lambda_{\mathrm{sph}})=d(\lambda)+1\;,\;\min\{d(P^+_G(\mu)\cap\{\lambda+\Z\lambda_{\mathrm{sph}}\})\}=0 \]
for all $\lambda\in P^+_G(\mu)$.

The bottom $B(\mu)$ of the induced spectrum $P_G^+(\mu)$ is defined as
\[ B(\mu)=\{\lambda\in P_G^+(\mu);d(\lambda)=0\} \]
giving $P_G^+(\mu)=B(\mu)+\N\lambda_{\mathrm{sph}}$ the structure of a well.
We have determined explicitly the structure of the bottom $B(\mu)$ with $\mu\in F$ for all
multiplicity free triples $(G,K,F)$ in the above table.
The first three lines of this table follow from a straightforward application of branching rules
going back to Weyl for the unitary group and Murnaghan for the orthogonal groups \cite{Knapp}.
The case of the symplectic group follows using the branching rule of Lepowsky \cite{Knapp,Lepowsky}, which under the
restriction $\rk{F}\leq2$ we are able to make completely explicit in \S\ref{symplectic}.
The remaining last two lines with the exceptional group of type $\mathrm{G}_2$ appearing turn out to be manageable as well and are treated in \S\S\ref{G2}, \ref{Spin7}.
The appropriate branching rules for the symmetric case $(\F_{4},\Spin(9))$ are calculated in \S \ref{F4}, using computer algebra.

Behind all these explicit calculations is a general multiplicity formula for branching rules going back to Kostant \cite{Lepowsky, Vogan} and rediscovered by Heckman \cite{Heckman}.
On the basis of our explicit knowledge of the bottom $B(\mu)$ for $\mu\in F$
we are able to verify case by case the following degree inequality in \S\S \ref{G2}, \ref{Spin7}, \ref{symplectic} and \ref{F4}.

\begin{theorem}\label{degree inequality theorem}
The degree $d:P^+_G(\mu)\rightarrow\N$ satisfies the inequality
\[ d(\lambda)-1\leq d(\lambda')\leq d(\lambda)+1 \]
for all $\lambda'\in P^+_G(\mu)$ with $c_{\lambda,\lambda'}\neq0$.
\end{theorem}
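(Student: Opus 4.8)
The plan is to reduce the two-sided estimate to a single monotonicity property of the degree, and then to establish that property from the explicit shape of the bottom $B(\mu)$. First I would turn the hypothesis $c_{\lambda,\lambda'}\neq0$ into a clean statement about degrees. By \eqref{eqn: inequality in well} the non-vanishing of $c_{\lambda,\lambda'}$ forces $\lambda-\lambda_{\mathrm{sph}}\preceq\lambda'\preceq\lambda+\lambda_{\mathrm{sph}}$. Moreover $\lambda+\lambda_{\mathrm{sph}}\in P^+_G(\mu)$ with $d(\lambda+\lambda_{\mathrm{sph}})=d(\lambda)+1$ (the former by the Borel--Weil argument preceding \eqref{eqn: expansion}, the latter by the definition of $d$), and, provided $d(\lambda)\ge1$, also $\lambda-\lambda_{\mathrm{sph}}\in P^+_G(\mu)$ with $d(\lambda-\lambda_{\mathrm{sph}})=d(\lambda)-1$; when $d(\lambda)=0$ the desired lower bound $d(\lambda')\ge d(\lambda)-1=-1$ is automatic since $d$ is $\N$-valued. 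Thus it suffices to prove the monotonicity statement
\[ \sigma\preceq\tau\ \Longrightarrow\ d(\sigma)\le d(\tau)\qquad(\sigma,\tau\in P^+_G(\mu)), \]
and then apply it to the pairs $(\sigma,\tau)=(\lambda',\lambda+\lambda_{\mathrm{sph}})$ and $(\sigma,\tau)=(\lambda-\lambda_{\mathrm{sph}},\lambda')$ to obtain the upper and lower bounds respectively.

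To prove monotonicity I would look for a linear functional $\ell$ on the weight lattice normalized by $\ell(\lambda_{\mathrm{sph}})=1$ and nonnegative on the positive roots, so that $\ell$ is itself monotone for $\preceq$; such an $\ell$ exists because $\lambda_{\mathrm{sph}}$ is a nonzero dominant (indeed primitive) weight. Using the well decomposition $P^+_G(\mu)=B(\mu)+\N\lambda_{\mathrm{sph}}$, every $\tau\in P^+_G(\mu)$ has a unique expression $\tau=\beta(\tau)+d(\tau)\lambda_{\mathrm{sph}}$ with $\beta(\tau)\in B(\mu)$, whence $\ell(\tau)=\ell(\beta(\tau))+d(\tau)$. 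For $\sigma\preceq\tau$ the monotonicity of $\ell$ gives
\[ d(\sigma)-d(\tau)\le\ell(\beta(\tau))-\ell(\beta(\sigma)), \]
so the whole estimate collapses to controlling the values of $\ell$ on the finite set $B(\mu)$. In particular, if $\ell$ can be chosen constant on $B(\mu)$, then $d$ agrees with $\ell$ up to an additive constant and monotonicity is immediate.

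The main obstacle is exactly this last point: there is no reason a priori for $B(\mu)$ to lie in a single level set of an admissible $\ell$, and cross-column dominance relations $\sigma\preceq\tau$ between distinct rays $\beta+\N\lambda_{\mathrm{sph}}$ must be ruled out or shown harmless. This is where the case-by-case input becomes unavoidable. Using the explicit bottoms $B(\mu)$ computed in \S\S\ref{G2}, \ref{Spin7}, \ref{symplectic}, \ref{F4}, together with the Kostant--Lepowsky branching formula, I would, for each of the seven families of Table~\ref{table: mfs}, either exhibit a functional $\ell$ with $\ell(\lambda_{\mathrm{sph}})=1$, $\ell\ge0$ on positive roots, and $\ell$ constant on $B(\mu)$, or, where $B(\mu)$ is genuinely slanted, bound $\ell(\beta(\tau))-\ell(\beta(\sigma))$ by analysing which pairs of bottom elements can become comparable in $\preceq$ after translation by a multiple of $\lambda_{\mathrm{sph}}$. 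I expect the symplectic family (with $\rk F\le2$) and the $\mathrm{F}_4$ case to demand the most careful bookkeeping, since there the bottom is largest and the dominance geometry is most intricate.
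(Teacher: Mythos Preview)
Your reduction to the monotonicity statement
\[
\sigma\preceq\tau\ \Longrightarrow\ d(\sigma)\le d(\tau)\qquad(\sigma,\tau\in P^+_G(\mu))
\]
is a genuine over-reduction: this monotonicity is \emph{false}. Take $(G,K)=(\Spin(7),\mathrm{G}_2)$ and $\mu=n\omega_1$ with $n\ge2$. By Corollary~\ref{(B_3,G_2,F_1) multiplicity free} one has $B(\mu)=\{klm:k+l+m=n\}$ and $\lambda_{\mathrm{sph}}=\varpi_3$, so $d(klm)=k+l+m-n$. Set $\sigma=\varpi_1+(n+1)\varpi_3$ and $\tau=2\varpi_2+(n-1)\varpi_3$; both lie in $P^+_G(\mu)$, $\tau-\sigma=\alpha_2$ is a simple root so $\sigma\preceq\tau$, yet $d(\sigma)=2>1=d(\tau)$. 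The point is that the unique linear functional $\ell$ with $\ell(\lambda_{\mathrm{sph}})=1$ that is constant on $B(\mu)$, namely $\ell(klm)=k+l+m$, satisfies $\ell(\alpha_2)=-1<0$; so your two desiderata ``$\ell\ge0$ on positive roots'' and ``$\ell$ constant on $B(\mu)$'' are incompatible here, and your fallback (bounding $\ell(\beta(\tau))-\ell(\beta(\sigma))$) still presupposes the first. The same obstruction appears for $(\mathrm{F}_4,\Spin(9))$, where for $\mu\in\N\omega_1+\N\omega_2$ the bottom functional $\ell(\lambda)=\sum\lambda_i$ has $\ell(\alpha_3)=-1$.

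The gap is that you only exploit the ordering consequence \eqref{eqn: inequality in well} of $c_{\lambda,\lambda'}\neq0$, whereas the actual constraint is stronger: since $\pi^G_{\lambda'}$ occurs in $\pi^G_{\lambda}\otimes\pi^G_{\lambda_{\mathrm{sph}}}$, the difference $\lambda'-\lambda$ must be a \emph{weight} of $V^G_{\lambda_{\mathrm{sph}}}$, not merely an element of the $\preceq$-interval $[-\lambda_{\mathrm{sph}},\lambda_{\mathrm{sph}}]$. In the $\Spin(7)$ example the element $\tau-\sigma=\alpha_2$ lies in that interval but is not a weight of the $8$-dimensional spin representation, so $c_{\sigma,\tau}=0$ after all. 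The paper's argument (carried out at the end of \S\S\ref{G2}, \ref{Spin7}, \ref{symplectic} and in the corollary after Theorem~\ref{thm: F4bottom}) proceeds exactly along your linear-functional idea but with this stronger input: one takes the $\ell$ for which $B(\mu)$ is a level set and $\ell(\lambda_{\mathrm{sph}})=1$ (so $d=\ell-\mathrm{const}$), lists the finitely many weights $\nu$ of $V^G_{\lambda_{\mathrm{sph}}}$ in fundamental-weight coordinates, and checks directly that $|\ell(\nu)|\le1$ for each of them. No $\preceq$-monotonicity of $\ell$ is needed or true.
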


As stated before, in all cases of our table the restriction of $\pi^K_{\mu}$ for $\mu\in F$ to the centralizer $M$ of
a Cartan circle $T$ in $G$ is multiplicity free.
Moreover, the irreducible constituents are indexed in a natural way by the bottom $B(\mu)$, as we shall explain in \S\ref{mfs}.
The restriction of the elementary spherical function $\Phi^{\mu}_{\lambda}$ to the Cartan circle $T$
takes values in $\End_M(V^K_{\mu})$, and so is block diagonal by Schur's Lemma: $\End_M(V^K_{\mu})\cong\C^{N_{\mu}}$
with $N_{\mu}$ the cardinality of the bottom $B(\mu)$. Operators on the left become vectors on the right.
In view of this isomorphism, $\Phi^{\mu}_{\lambda}(t)$ for $t\in T$ is identified with the function $\Psi^{\mu}_{\lambda}(t)$ taking values in $\C^{N_{\mu}}$. We define for $n\in\N$ the matrix valued spherical functions $\Psi^{\mu}_n(t)$,
whose columns are the vector valued functions $\Psi^{\mu}_{\lambda}(t)$ with $\lambda\in P_G^+(\mu)$ of degree $d(\lambda)=n$.
Observe that both rows and columns of the matrix $\Psi^{\mu}_n(t)$ are indexed by the bottom $B(\mu)$.
Finally we can define our matrix valued polynomials $P^{\mu}_n(x)\in\mathbb{M}[x]$ of size $N_{\mu}\times N_{\mu}$ as functions of a real variable $x$ by
\[ \Psi^{\mu}_n(t)=\Psi^{\mu}_0(t)P^{\mu}_n(x) \]
with $t\mapsto x$ a new variable, defined by $x=c\phi+(1-c)$ for some $c>0$ (with $\phi$ the fundamental
zonal spherical function as before) in order to make the orthogonality interval $x(T)$ equal to $[-1,1]$.

The crucial fact that $P^{\mu}_n(x)$ is a matrix valued polynomial in $x$ of degree $n$ with invertible
leading coefficient $D^{\mu}_n$ (inductively given by $D^{\mu}_n=D^{\mu}_{n+1}A^{\mu}_n$) follows
from a three term recurrence relation
\[ xP^{\mu}_n(x)=P^{\mu}_{n+1}(x)A^{\mu}_n+P^{\mu}_{n}(x)B^{\mu}_n+P^{\mu}_{n-1}(x)C^{\mu}_n \]
which is obtained using the expansion (\ref{eqn: expansion}). 
Theorem \ref{degree inequality theorem} together with the ordering relation (\ref{eqn: inequality in well}) and $c_{\lambda,\lambda+\lambda_{\sph}}\ne0$ imply that the matrices $A_{n}$ are triangular with non-zero diagonal, and hence are invertible.
The matrix valued weight function is given by
\[ W^{\mu}(x)=(\Psi^{\mu}_0(t))^{\dagger}D^{\mu}\Psi^{\mu}_0(t)w(x) \]
with $w(x)=(1-x)^{\alpha}(1+x)^{\beta}$ the usual scalar weight function for the Cartan decomposition
$G=KTK$ and suitable $\alpha,\beta\in\N/2$ given in terms of root multiplicities.
The matrix $D^{\mu}$ is diagonal with entries the dimensions of the irreducible constituents of the restriction of $\pi^K_{\mu}$ to $M$,
which as a set was indexed by the bottom $B(\mu)$ as should. The diagonal matrix $D^{\mu}$ arises from the identification
$\End_M(V^K_{\mu})\cong\C^{N_{\mu}}$ with the trace form of the left operator side
and the standard Hermitian form on the right vector side.

The matrix valued polynomials $P^{\mu}_n(x)$ are orthogonal with respect to the
weight function $W^{\mu}(x)$ and have diagonal square norms, since
\[ \langle P^{\mu}_n,P^{\mu}_{n'}\rangle_{\nu,\nu'}=\langle\Phi^{\mu}_{\lambda},\Phi^{\mu}_{\lambda'}\rangle \]
with $\lambda=\nu+n\lambda_{\mathrm{sph}},\lambda'=\nu'+n'\lambda_{\mathrm{sph}}\in P^+_G(\mu)=B(\mu)+\N\lambda_{\mathrm{sph}}$.
Finally, the monic orthogonal polynomials $M^{\mu}_n(x)=x^n+\cdots$ and the orthogonal polynomials
$P^{\mu}_n(x)=M^{\mu}_n(x)D^{\mu}_n$ are related by eliminating the invertible leading coefficient $D^{\mu}_n$.

By Lie algebraic methods the polynomials $P^{\mu}_n(x)$ are shown to be eigenfunctions
of a commutative algebra $\mathbb{D}^{\mu}\subset\mathbb{M}[x,\del_{x}]$ of matrix valued differential operators
\[ DP^{\mu}_n=P^{\mu}_n\Lambda^{\mu}_n(D) \]
with $\Lambda^{\mu}_n(D)$ a diagonal eigenvalue matrix for all $D\in\mathbb{D}^{\mu}$.
The desired second order operator for the orthogonal polynomials with
the Sturm--Liouville property comes from the quadratic Casimir operator.
The dimension of the affine variety underlying the commutative algebra $\mathbb{D}^{\mu}$
is equal to the affine rank of the well $P^+_G(\mu)$.

Our explicit results on branching rules provide examples of the convexity theorem for
Hamiltonian actions of connected compact Lie groups on connected symplectic manifolds
with a proper moment map \cite{Heckman}, \cite{Guillemin--Sternberg2},
\cite{Guillemin--Sternberg3}, \cite{Guillemin--Sternberg4}, \cite{Kirwan}.
The multiplicities occur at the integral points in the moment polytopes
in accordance with the $[Q,R]=0$ principle of geometric quantization \cite{Guillemin--Sternberg1}.

In the next section we first discuss the pair $(G,K)=(G_2,\SU(3))$, which is an instructive
example to illustrate the various aspects of the representation theory and the construction
of the matrix valued orthogonal polynomials.

\textbf{Acknowledgement.} We thank Noud Aldenhoven for his help in programming certain branching rules, which gave us a good idea about the multiplicity freeness in the symplectic case. Furthermore, we thank Erik Koelink and Pablo Rom{\'a}n for fruitful discussions concerning matrix valued orthogonal polynomials.


\section{The pair $(G,K)=(\G_2,\SU(3))$}\label{G2}

In this section we take $G$ of type $\mathrm{G}_2$ and $K=\SU(3)$ the subgroup of type $A_2$.
Having the same rank the root systems $R_G$ of $G$ and $R_K$ of $K$ can be drawn in one picture,
and $R_K$ consists of the $6$ long roots.
The simple roots $\{\alpha_1,\alpha_2\}$ in $R_G^+$ and $\{\beta_1,\beta_2\}$ in $R_K^+$ are
indicated in Figure \ref{figure: roots for G2} and $P_G^+=\N\varpi_1+\N\varpi_2$ is contained in $P_K^+=\N\omega_1+\N\omega_2$.

\begin{figure}[ht]
\begin{center}
\begin{tikzpicture}[scale=1]


\pgfmathsetmacro\ax{2}
\pgfmathsetmacro\ay{0}
\pgfmathsetmacro\bx{2 * cos(120)}
\pgfmathsetmacro\by{2 * sin(120)}
\pgfmathsetmacro\lax{2*\ax/3 + \bx/3}
\pgfmathsetmacro\lay{2*\ay/3 + \by/3}
\pgfmathsetmacro\lbx{\ax/3 + 2*\bx/3}
\pgfmathsetmacro\lby{\ay/3 + 2*\by/3}


\begin{scope}

\clip (0,0) circle (3);

\foreach \k in {1,...,12} 
  \draw[dashed] (0,0) -- (\k * 30 + 30:40);

\foreach \a in {-10,...,-1,1,2,3,4,5}
   \draw[dotted] (\a*\lax-50*\lbx,\a*\lay-50*\lby) -- (\a*\lax+50*\lbx,\a*\lay+50*\lby);

\foreach \a in {-40,-39,...,40}
   \draw[dotted] (-50*\lax+\a*\lbx,-50*\lay+\a*\lby)-- (50*\lax+\a*\lbx,50*\lay+\a*\lby);

\foreach \a in {-40,-39,...,40}
   \draw[dotted] (-50*-\lax-50*\lbx+\a*\lbx,-50*-\lay-50*\lby+\a*\lby)-- (50*-\lax+50*\lbx+\a*\lbx,50*-\lay+50*\lby+\a*\lby);

\draw[thick,->] (0,0) -- (\ax,\ay) node[below] {\(\alpha_2=\beta_{2}\)};
\draw[thick,->] (0,0) -- (\bx,\by) node[left]{\(\beta_{1}\)};
\draw[thick,->] (0,0) -- (-\ax,-\ay); 
\draw[thick,->] (0,0) -- (-\bx,-\by); 
\draw[thick,->] (0,0) -- (\ax+\bx,\ay+\by) node[right] {\(\varpi_{2}\)};
\draw[thick,->] (0,0) -- (-\ax-\bx,-\ay-\by);

\draw[thick,->] (0,0) -- (\lax,\lay) node[right] {\(\omega_{2}\)};
\draw[thick,->] (0,0) -- (\lbx,\lby) node[above]{\(\varpi_{1}=\omega_{1}\)};
\draw[thick,->] (0,0) -- (-\lax,-\lay);
\draw[thick,->] (0,0) -- (-\lbx,-\lby);
\draw[thick,->] (0,0) -- (\lax-\lbx,\lay-\lby);
\draw[thick,->] (0,0) -- (\lbx-\lax,\lby-\lay) node[left] {\(\alpha_1\)};

\end{scope}
\end{tikzpicture}

\end{center}
\caption{Roots for $\G_{2}$.}\label{figure: roots for G2}
\end{figure}
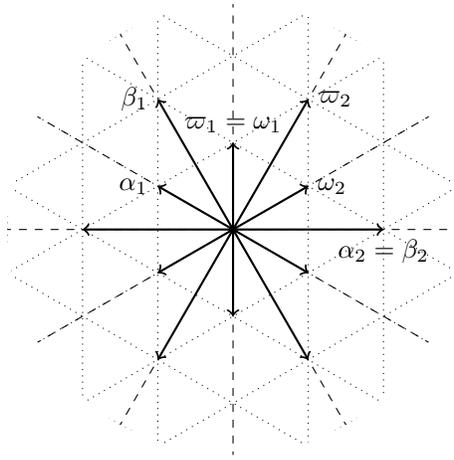

The branching rule from $G$ to $K$ is well known, see for example \cite{Heckman}.
In the picture below $s_1\in W_G$ is the orthogonal reflection in the mirror $\R\varpi_2$.
For $\lambda\in P_G^+$ the multiplicities $m_{\lambda}(\mu)$ for $\mu\in P_K^+$ are
supported in the gray region in the left picture. They have the familiar pattern of
the weight multiplicities for $\SU(3)$ as discussed in the various text books
\cite{Humphreys}, \cite{Fulton--Harris}. They are one on the outer hexagon, and increase by one
on each inner shell hexagon, untill the hexagon becomes a triangle, and from that moment on they stabilize.
Hence the restriction to $K$ of any irreducible representation of $G$ with highest weight $\lambda\in P_G^+$
is multiplicity free on the two rank one faces $\N\omega_1$ and $\N\omega_2$ of the dominant cone $P_K^+$.
In other words, the triples $(\mathrm{G}_2,\mathrm{A}_2,F_i=\N\omega_i)$ are multiplicity free for $i=1,2$,
which proves the last line of the table in Theorem{\;\ref{classification theorem}}.

The irreducible spherical representations of $G$ containing the trivial representation of $K$
have highest weight in $\N\varpi_1$, and $\lambda_{\mathrm{sph}}=\varpi_1$ is the fundamental
spherical weight. Given $\mu=n\omega_1\in F_1$ (and likewise $\mu=n\omega_2\in F_2$)
the corresponding induced spectrum of $G$ is multiplicity free by Frobenius reciprocity,
and by inversion of the branching rule has multiplicity one on the well shaped region
\[ P_G^+(\mu)=B(\mu)+\N\varpi_1\;,\;B(\mu)=\{k\varpi_1+l\varpi_2;k+l=n\}  \]
with bottom $B(\mu)$. The bottom is given by a single linear relation.

If we take $M$ the $\SU(2)$ group corresponding to the roots $\{\pm\alpha_2\}$
and denote by $p:P_G^+\rightarrow P_M^+=\N(\tfrac12\alpha_2)$ the natural projection
along the spherical direction $\varpi_1$, then $p$ is a bijection from the bottom $B(\mu)$
onto the image $p(B(\mu))$, which is just the restricted spectrum $P_M^+(\mu)$ for $M$
of the irreducible representation of $K$ with highest weight $\mu$.

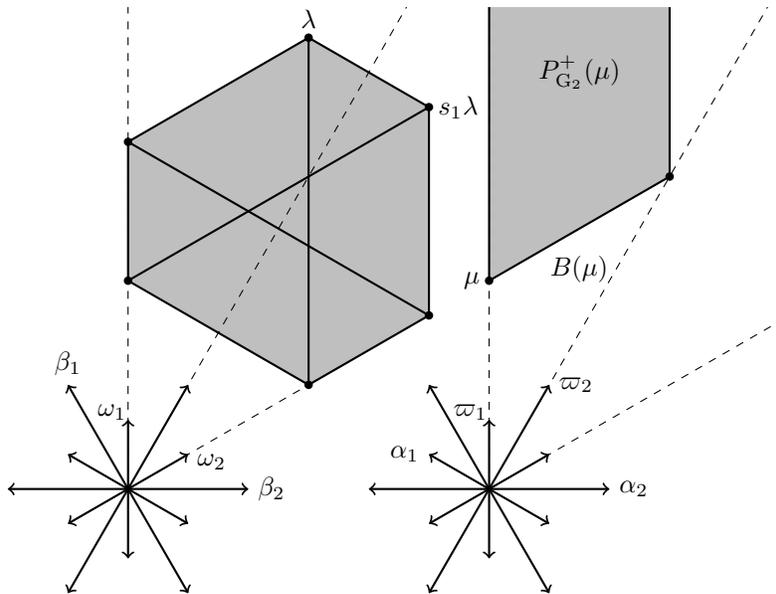
\begin{figure}[ht]
\begin{center}
\begin{tikzpicture}[scale=.8]


\pgfmathsetmacro\ax{2}
\pgfmathsetmacro\ay{0}
\pgfmathsetmacro\bx{2 * cos(120)}
\pgfmathsetmacro\by{2 * sin(120)}
\pgfmathsetmacro\lax{2*\ax/3 + \bx/3}
\pgfmathsetmacro\lay{2*\ay/3 + \by/3}
\pgfmathsetmacro\lbx{\ax/3 + 2*\bx/3}
\pgfmathsetmacro\lby{\ay/3 + 2*\by/3}


\begin{scope}

\clip (-2,-2) rectangle (3+4*\ax,8);

\draw[thick, fill=lightgray] (0,3*\lby) -- (0,5*\lby) -- (3*\lax+5*\lbx,3*\lay+5*\lby) -- (5*\lax,5*\lay+3*\lby) -- (5*\lax,5*\lay) -- (3*\lax,3*\lay) --cycle;

\draw[dashed] (0,0) -- (0,10);
\draw[dashed] (0,0)-- (8*\lax,8*\lay+8*\lby);
\draw[dashed] (0,0) -- (5*\lax,5*\lay);

\draw[thick,->] (0,0) -- (\ax,\ay) node[right] {\(\beta_{2}\)};
\draw[thick,->] (0,0) -- (\bx,\by) node[above] {\(\beta_{1}\)};
\draw[thick,->] (0,0) -- (-\ax,-\ay);
\draw[thick,->] (0,0) -- (-\bx,-\by);
\draw[thick,->] (0,0) -- (\ax+\bx,\ay+\by);
\draw[thick,->] (0,0) -- (-\ax-\bx,-\ay-\by);

\draw(\lax,\lay-1/8) node[right]{\(\omega_{2}\)};

\draw[thick,->] (0,0) -- (\lax,\lay);
\draw[thick,->] (0,0) -- (\lbx,\lby);
\draw (\lbx+1/8,\lby+1/8) node[left]{\(\omega_{1}\)};
\draw[thick,->] (0,0) -- (-\lax,-\lay);
\draw[thick,->] (0,0) -- (-\lbx,-\lby);
\draw[thick,->] (0,0) -- (\lax-\lbx,\lay-\lby);
\draw[thick,->] (0,0) -- (\lbx-\lax,\lby-\lay);

\fill (0,3*\lby) circle (2pt);
\fill (0,5*\lby) circle (2pt);
\fill (3*\lax+5*\lbx,3*\lay+5*\lby)circle (2pt) node[above] {\(\lambda\)};
\fill (5*\lax,5*\lay+3*\lby)circle (2pt) node[right] {\(s_{1}\lambda\)};
\fill (5*\lax,5*\lay) circle (2pt);
\fill (3*\lax,3*\lay) circle (2pt);

\draw[thick] (0,3*\lby) -- (5*\lax,5*\lay+3*\lby);
\draw[thick] (0,5*\lby) -- (5*\lax,5*\lay);
\draw[thick] (3*\lax+5*\lbx,3*\lay+5*\lby) -- (3*\lax,3*\lay);

\draw[thick, fill=lightgray] (3*\ax,3*\lby) -- (3*\ax,9*\lby) -- (3*\lax+3*\ax,3*\lay+9*\lby) -- (3*\lax+3*\ax,3*\lay+3*\lby) -- cycle;

\draw[dashed] (0+3*\ax,0) -- (0+3*\ax,10);
\draw[dashed] (0+3*\ax,0)-- (15*\lax+3*\ax,15*\lay+15*\lby);
\draw[dashed] (0+3*\ax,0) -- (15*\lax+3*\ax,15*\lay);

\draw[thick,->] (0+3*\ax,0) -- (\ax+3*\ax,\ay) node[right] {\(\alpha_{2}\)};
\draw[thick,->] (0+3*\ax,0) -- (\bx+3*\ax,\by);
\draw[thick,->] (0+3*\ax,0) -- (-\ax+3*\ax,-\ay);
\draw[thick,->] (0+3*\ax,0) -- (-\bx+3*\ax,-\by);
\draw[thick,->] (0+3*\ax,0) -- (\ax+\bx+3*\ax,\ay+\by) node[right] {\(\varpi_{2}\)};
\draw[thick,->] (0+3*\ax,0) -- (-\ax-\bx+3*\ax,-\ay-\by);

\draw(\lax+3*\ax,\lay-1/8);
\draw[thick,->] (0+3*\ax,0) -- (\lax+3*\ax,\lay);
\draw[thick,->] (0+3*\ax,0) -- (\lbx+3*\ax,\lby);
\draw (\lbx+3*\ax+1/8,\lby+1/8) node[left]{\(\varpi_{1}\)};
\draw[thick,->] (0+3*\ax,0) -- (-\lax+3*\ax,-\lay);
\draw[thick,->] (0+3*\ax,0) -- (-\lbx+3*\ax,-\lby);
\draw[thick,->] (0+3*\ax,0) -- (\lax-\lbx+3*\ax,\lay-\lby);
\draw[thick,->] (0+3*\ax,0) -- (\lbx-\lax+3*\ax,\lby-\lay) node[left] {\(\alpha_{1}\)};

\fill (3*\ax,3*\lby) circle (2pt) node[left] {\(\mu\)};
\fill (3*\lax+3*\lbx+3*\ax,3*\lay+3*\lby) circle (2pt);
\draw (3/2*\lax+3*\ax,6*\lby) node {\(P_{\mathrm{G}_{2}}^{+}(\mu)\)};
\draw (3/2*\lax+3*\ax,7/2*\lby) node[below] {\(B(\mu)\)};


\end{scope}
\end{tikzpicture}
\end{center}
\caption{Branching from $\G_{2}$ to $\SU(3)$ on the left and the $\mu$-well on the right.}\label{intro: figure G2A2 branching}
\end{figure}

There is warning about the choice of the various Cartan subalgebras.
In order to compute branching rules it is natural and convenient (as we did above)
to choose the Cartan subalgebra of $K$ contained in the Cartan subalgebra of $G$.
The other choice is that we start with a rank one Gelfand pair $(G,K)$,
and choose the Cartan circle group $T$ in $G$ perpendicular to $K$.
If $M$ is the centralizer of $T$ in $K$, then $MT$ is a subgroup in $G$ of full rank.
A maximal torus in $MT$ is then a maximal torus for $G$ as well.
But this maximal torus need not contain a maximal torus for $K$,
as is clear from the present example. It will only do so if the rank of $K$
is equal to the rank of $M$, which is equal to the rank of $G$ minus $1$,
and a maximal torus of $M$ is a maximal torus of $K$ as well.


\section{Multiplicity free systems}\label{mfs}

Connected compact irreducible Gelfand pairs $(G,K)$ have been classified by Kr\"{a}mer
for $G$ a simple Lie group and by Brion for $G$ a semisimple Lie group \cite{Kramer}, \cite{Brion1}.
We shall assume that $G$ and $K$ are connected, and that the connected space $G/K$ is also simply connected.
The pair $(G,K)$ is called rank one if the Hecke algebra $\mathcal{H}(G,K)$ of zonal  spherical
(so bi-$G$-finite and bi-$K$-invariant) functions is a polynomial algebra $\C[\phi]$ with one generator,
the fundamental elementary zonal spherical function $\phi=\phi_{\mathrm{sph}}$.
We shall assume throughout this paper that $(G,K)$ is a rank one Gelfand pair, with $G/K$ simply connected.
The corresponding spaces $G/K$ are just the distance regular spaces found by Wang \cite{Wang}.

Indeed, for $K<G$ compact connected Lie groups the homogeneous space $G/K$ equipped with an invariant Riemannian metric
is distance transitive for the action of $G$ on $G/K$ if and only if the action of $K$ on the tangent space $T_{eK}G/K$
is transitive on the unit sphere. This is equivalent with the algebra $P(T_{eK}G/K)^K$ of polynomial invariants being
a polynomial algebra in a single generator (the quadratic norm), which in turn is equivalent with the Hecke algebra
$\mathcal{H}(G,K)$ being a polynomial algebra $\C[\phi]$ in the single generator $\phi=\phi_{\mathrm{sph}}$.
If $\mathcal{H}(G,K)=\C[\phi]$ has a single generator then it is commutative as convolution algebra,
which is equivalent with $(G,K)$ being a Gelfand pair.

Let $\mathfrak{k}<\mathfrak{g}$ be the Lie algebras of $K<G$.
By definition the infinitesimal Cartan decomposition $\mathfrak{g}=\mathfrak{k}\oplus\mathfrak{p}$
is the orthogonal decomposition with respect to minus the Killing form on $\mathfrak{g}$.
Since $(G,K)$ has rank one the adjoint homomorphism $K\rightarrow\SO(\mathfrak{p})$ is a surjection.
Fix a (maximal Abelian) one dimensional subspace $\mathfrak{t}$ in $\mathfrak{p}$.
Any two such are clearly conjugated by $K$, and let $T<G$ be the corresponding Cartan circle group.
Let $M<N$ be the centralizer and normalizer of $T$ in $K$ with Lie algebra $\mathfrak{m}$.
The Weyl group $W=N/M$ has order $2$ and acts on $T$ by $t\mapsto t^{\pm1}$.

The subgroup $MT$ has maximal rank in $G$, and choosing a maximal torus in $MT$ for $G$ defines a natural
restriction map from the weight lattice $P_G$ of $G$ to the weight lattice of the circle $T$.
The next result for symmetric pairs is just the Cartan--Helgason theorem.

\begin{proposition}\label{Cartan circle group}
Suppose $G$ is simply connected and $K$ is connected, so that $G/K$ is simply connected.
Then $T\cap K$ has order $2$, except for the Gelfand pair $(G,K)=(\Spin(7),G_2)$ where it has order $3$.
\end{proposition}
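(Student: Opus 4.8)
The plan is to exploit that $T\cap K$ is the stabilizer in the circle $T$ of the base point $eK$ for the $T$-action on $G/K$, hence a closed subgroup of $T\cong\U(1)$; since $\lat\subset\lap$ meets $\lak$ only in $0$, this subgroup is finite cyclic, and its order counts how many times the one-parameter group $s\mapsto\exp(sX)$, with $X$ a generator of $\lat$, wraps around the closed geodesic $T\cdot eK$. Concretely, if $K=\mathrm{Stab}_G(v_0)$ is the stabilizer of the base point in a suitable realization, then $T\cap K=\ell\Z/\tau\Z$ is cyclic of order $\tau/\ell$ generated by $\exp(\ell X)$, where $\tau=\min\{s>0:\exp(sX)=e\}$ and $\ell=\min\{s>0:\exp(sX)v_0=v_0\}$. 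I would split the list of Theorem~\ref{classification theorem} into the five symmetric families and the two non-symmetric exceptional pairs $(\G_2,\SU(3))$ and $(\Spin(7),\G_2)$, treating the former conceptually and the latter by explicit computation.

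For the symmetric pairs I would argue as follows. The infinitesimal Cartan involution $\theta$ (equal to $+1$ on $\lak$ and $-1$ on $\lap$) integrates, because $G$ is simply connected, to an involution $\Theta$ of $G$; moreover $G^{\Theta}$ is then connected, so $G^{\Theta}=K$. For $t=\exp(H)\in T$ with $H\in\lat\subset\lap$ one has $\Theta(t)=\exp(\theta H)=\exp(-H)=t^{-1}$, whence $t\in K=G^{\Theta}$ if and only if $t^{2}=e$. Since the circle $T$ contains a unique element of order two, $T\cap K$ has order $2$. This is the group-level content of the Cartan--Helgason theory invoked just before the statement.

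For the two exceptional pairs, which are not symmetric, I would compute $T\cap K$ directly inside a maximal torus. Here $G/K$ is the sphere $\Sph^{6}$ resp.\ $\Sph^{7}$, realized through the faithful $7$-dimensional representation of $\G_2$ resp.\ the faithful $8$-dimensional spin representation $\Delta$ of $\Spin(7)$, and $K=\mathrm{Stab}_G(v_0)$. One reads off $\tau$ and $\ell$ from the $T$-weights of that representation, whence $T\cap K$ is cyclic of order $\tau/\ell$. For $(\Spin(7),\G_2)$ the spin weights are $\tfrac12(\pm\theta_1\pm\theta_2\pm\theta_3)$; the $\G_2$-fixed spinor $v_0$ sits in the two extreme weight spaces $\pm\tfrac12(\theta_1+\theta_2+\theta_3)$, while the complement $\mathbf 3\oplus\overline{\mathbf 3}$ occupies the six middle weights. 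Choosing the $\G_2$-symmetric direction $\theta_1(X)=\theta_2(X)=\theta_3(X)$ then assigns $v_0$ the frequency $\tfrac32$ and the complement the frequency $\tfrac12$, so $\tau/\ell=3$, and $\exp(\ell X)=\diag(1,1,\zeta,\zeta,\zeta,\bar\zeta,\bar\zeta,\bar\zeta)$ with $\zeta=e^{2\pi i/3}$ is the promised order-$3$ element of $T\cap K$. The same bookkeeping for $(\G_2,\SU(3))$, where the $7$-dimensional representation has $T$-weights $0,\pm a,\pm 2a$ with $v_0$ in the extreme frequency $2a$, yields $\tau/\ell=2$.

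I expect the genuine obstacle to be entirely in this last step. As the warning after the $\G_2$ example explains, the Cartan circle $T$ lies in a maximal torus $\tilde T$ that is \emph{not} the one adapted to the branching rules, so $X$ must be located and the weights of $\Delta$ resp.\ of the $7$-dimensional representation evaluated in this rotated torus. Pinning down that the $\G_2$-fixed spinor really occupies the extreme spin weights (equivalently, that the $\SU(3)$-trivial lines of $\Delta$ carry the frequencies $\pm\tfrac32$ rather than the middle frequencies), and checking that $\exp(\ell X)$ genuinely lies in $K=\mathrm{Stab}_G(v_0)$ and not merely fixes a line, is exactly what separates the exceptional value $3$ from the generic value $2$.
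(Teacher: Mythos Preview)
Your treatment of the symmetric pairs coincides with the paper's: from $\Theta(t)=t^{-1}$ on $T$ and $K=G^{\Theta}$ (using that $G$ is simply connected so that $G^{\Theta}$ is connected) one gets that $T\cap K$ is the $2$-torsion of the circle, hence of order $2$.

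For the two exceptional pairs your route is genuinely different. Rather than realizing $K$ as a point stabilizer and locating the fixed vector among the $T$-weight spaces, the paper invokes the principle that $\lambda_{\sph}|_T$ generates the character lattice of $T/(T\cap K)$, so that $|T\cap K|$ is simply the value of $\lambda_{\sph}$ on the primitive cocharacter of $T$; both numbers are then read off Bourbaki's tables: $\varpi_3$ paired with $2(e_1{+}e_2{+}e_3)$ gives $3$ for $(\Spin(7),\G_2)$, and $\varpi_1$ paired with $-e_2{+}e_3$ gives $2$ for $(\G_2,\SU(3))$. Your computation is really the same number in disguise---the representations you choose are precisely the fundamental spherical ones $V_{\lambda_{\sph}}$, and the ``extreme $T$-frequency'' of $v_0$ equals $\lambda_{\sph}|_T$ on the primitive cocharacter---but it costs you the verification you correctly flag at the end: pinning down which $T$-weight space carries $v_0$. (For $(\G_2,\SU(3))$ this is slightly delicate because, as the warning after \S\ref{G2} says, the maximal torus containing $T$ does not contain a maximal torus of $K$, so one must argue via $M=\SU(2)$ rather than by symmetry alone.) The paper's phrasing bypasses that step entirely, at the price of stating rather than proving the ``crucial remark''; your geometric picture, on the other hand, makes the closed geodesic on the sphere visible and explains why the answer is a small integer.
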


\begin{proof}
The crucial remark is that the highest weight $\lambda_{\mathrm{sph}}\in P_G^+$ of the fundamental zonal spherical
representation of $(G,K)$ after restriction to $T$ becomes a generator for the weight lattice of $T/(T\cap K)$.
For a symmetric pair $(G,K)$ with Cartan involution $\theta:G\rightarrow G$ we have $K=G^{\theta}$ and $\theta(t)=t^{-1}$ for $t\in T$.
Hence $T\cap K$ has order $2$ for $(G,K)$ a symmetric pair. In the remaining two cases we use the notation of Bourbaki \cite{Bourbaki}.

For $(G,K)=(\Spin(7),G_2)$ the weight lattice of $G$ is naturally identified with $\Z^3$
with basis $\epsilon_i$, and likewise the dual coroot lattice becomes $\Z^3$ with basis $e_i$.
The character lattice of $T/(T\cap K)$ has generator $\varpi_3=(\epsilon_1+\epsilon_2+\epsilon_3)/2$,
which takes the value $3$ on the generator $2(e_1+e_2+e_3)$ of the coroot lattice of $T$.

For $(G,K)=(G_2,\SU(3))$ the weight lattice of $G$ is naturally identified with
$\{\xi\in\Z^3;\xi_1+\xi_2+\xi_3=0\}$, and likewise the dual coroot lattice becomes $\{x\in\Z^3;x_1+x_2+x_3=0\}$.
The character lattice of $T/(T\cap K)$ has generator $\varpi_1=2\alpha_1+\alpha_2=-\epsilon_2+\epsilon_3$,
which takes the value $2$ on the generator $-e_2+e_3$ of the coroot lattice of $T$.
\end{proof}

In the next definition we explain the well shape of the induced spectrum
$P_G^+(\mu)=B(\mu)+\N\lambda_{\mathrm{sph}}$ with bottom $B(\mu)$.
This idea goes back to Kostant and Camporesi \cite{Kostant}, \cite{Camporesi1}.

\begin{definition}
For $\mu\in P_K^+$ the highest weight of an irreducible representation $\pi^K_{\mu}$ of $K$ the
induced representation $\mathrm{Ind}^G_K(\pi^K_{\mu})$ decomposes as a direct sum of irreducible
representations $\pi^G_{\lambda}$ of $G$ with branching multiplicities
\[ m^{G,K}_{\lambda}(\mu)=[\pi^G_{\lambda}:\pi^K_{\mu}] \]
for all $\lambda\in P_G^+$ by Frobenius reciprocity. We denote
\[ P_G^+(\mu)=\{\lambda\in P_G^+;m^{G,K}_{\lambda}(\mu)\geq1\} \]
for the induced spectrum. In the introduction we have explained using the Borel--Weil theorem that $\lambda\in P_G^+(\mu)$
implies $\lambda+\lambda_{\mathrm{sph}}\in P_G^+(\mu)$. In turn we see that $P_G^+(\mu)=B(\mu)+\N\lambda_{\mathrm{sph}}$
has the shape of a well with
\[ B(\mu)=\{\lambda\in P_G^+(\mu);\lambda-\lambda_{\mathrm{sph}}\notin P_G^+(\mu)\} \]
the bottom of the induced spectrum $P_G^+(\mu)$.
\end{definition}

To arrive at a good theory of matrix valued orthogonal polynomials we have to restrict ourselves
to multiplicity free triples $(G,K,\mu)$ and $(G,K,F)$ for $\mu\in P_K^+$ a suitable dominant weight for $K$
and $F$ a suitable facet of the dominant cone $P_K^+$ for $K$.

\begin{definition}
The triple $(G,K,\mu)$ with $\mu\in P_K^+$ a highest weight for $K$ is called multiplicity free if
the branching multiplicity $m_{\lambda}(\mu)\leq1$ for all $\lambda\in P_G^+$,
so if the induced representation $\mathrm{Ind}^G_K(\pi^K_{\mu})$ decomposes multiplicity free as a representation of $G$.
Likewise, $(G,K,F)$ is called a multiplicity free system with $F$ a facet of the dominant integral cone $P_K^+$ if $(G,K,\mu)$ is multiplicity free for all $\mu\in F$.
\end{definition}

Camporesi calculated the bottoms $B(\mu)$ of the well $P_G^+(\mu)$ explicitly in the first three examples of the table
in Theorem{\;\ref{classification theorem}} using the classical branching laws of Weyl for the unitary group
and Murnaghan for the orthogonal groups \cite{Camporesi1},\cite{Knapp}. In the fourth example of the symplectic group
he obtained partial results, because of the complexity of the branching law of Lepowsky (from $G$ to $K$)
\cite{Lepowsky} and of Baldoni Silva (from $K$ to $M$) \cite{Baldoni Silva} in that case.
However, in this symplectic case the restriction on a multiplicity free system$(G,K,F)$ is
just strong enough to find a completely explicit description of the bottom.

\begin{proposition}\label{prop: reducing to M}
Let $F$ be a facet of the dominant integral cone $P_K^+$.
Then the branching multiplicity $m^{G,K}_{\lambda}(\mu)\leq1$ for all $\mu\in F$ and all dominant weights $\lambda\in P_G^+$ if and only if the branching
multiplicity $m^{K,M}_{\mu}(\nu)\leq1$ for all $\mu\in F$ and
all dominant weights $\nu\in P_M^+$.
\end{proposition}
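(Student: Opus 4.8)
The plan is to translate each of the two multiplicity-freeness conditions into the commutativity of an explicit algebra, and then to confront the two algebras by means of the Cartan decomposition $G=KTK$. On the $(G,K)$ side, $\Ind_K^G(\pi^K_\mu)=\bigoplus_\lambda m^{G,K}_\lambda(\mu)\,\pi^G_\lambda$, so the condition $m^{G,K}_\lambda(\mu)\le1$ for all $\lambda$ is equivalent to the commutativity of the $\mu$-spherical Hecke algebra $\mathcal{H}_\mu$, which I realize concretely as the convolution algebra of continuous functions $f\colon G\to\End(V^K_\mu)$ obeying $f(k_1gk_2)=\pi^K_\mu(k_1)f(g)\pi^K_\mu(k_2)$. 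On the $(K,M)$ side, $m^{K,M}_\mu(\nu)\le1$ for all $\nu$ is by definition the multiplicity-freeness of $\pi^K_\mu|_M$, equivalently the commutativity of $\End_M(V^K_\mu)\cong\bigoplus_\nu\Mat_{m^{K,M}_\mu(\nu)}(\C)$. Thus the proposition asks that $\mathcal{H}_\mu$ be commutative for every $\mu\in F$ exactly when $\End_M(V^K_\mu)$ is.

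First I would use that $G=KTK$ with $M=Z_K(T)$ to put the two algebras on a common footing. Restriction to $T$ is an injective linear map $\mathcal{H}_\mu\to C(T,\End(V^K_\mu))$, and because $M$ centralizes $T$ the transformation rule forces $f(t)\in\End_M(V^K_\mu)$ for $t\in T$. Writing $\dot w\in N$ for a representative of the nontrivial element of the Weyl group $W=N/M$ of order two, the relation $\dot w t\dot w^{-1}=t^{-1}$ (which also yields $g^{-1}\in KgK$ for all $g$) identifies the image with the $W$-equivariant functions $f(t^{-1})=\pi^K_\mu(\dot w)f(t)\pi^K_\mu(\dot w)^{-1}$, and evaluation $f\mapsto f(t_0)$ at a regular $t_0$ is surjective onto $\End_M(V^K_\mu)$. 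In this way both $\mathcal{H}_\mu$ and $\End_M(V^K_\mu)$ are modeled on $\End_M(V^K_\mu)$-valued functions on $T$, and the whole question becomes whether the convolution product matches the commutativity of the pointwise product.

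The step I expect to be the main obstacle is precisely that convolution is \emph{not} pointwise after restriction to $T$: evaluating $(f*h)(t_0)=\int_G f(t_0g^{-1})h(g)\,dg$ through $G=KTK$ requires re-decomposing elements $t_0k^{-1}t^{-1}\in KTK$, and the resulting addition formula couples the $\End_M$-values against $\pi^K_\mu(k)$ with $k\notin M$. As a result commutativity cannot be transferred by a soft argument in either direction. In particular the naive Gelfand trick fails: the anti-involution $\sigma(f)(g)=f(g^{-1})^\dagger$ does lie in $\mathcal{H}_\mu$, but it is only conjugate-linear and so cannot serve to force commutativity, whereas replacing $\dagger$ by a transpose would destroy the bi-$K$-equivariance unless $\pi^K_\mu$ happens to be real. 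Neither evaluation at $t_0$ nor restriction to $T$ is an algebra homomorphism, so the surjection and the injection above do not by themselves propagate commutativity.

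To overcome this I would bring in the rank one spherical geometry, as in \cite[Prop.~2.2.9]{van Pruijssen}. Complexifying, $G_c/K_c$ is a spherical variety with a single spherical root, and by the Vinberg--Kimelfeld criterion the multiplicity-freeness of $\bigoplus_{\mu\in F}\Ind_K^G(\pi^K_\mu)$ is equivalent to a density condition for a Borel subgroup $B\subset G_c$ acting on the associated sections. The local structure theorem for this rank one variety describes the generic $B$-orbit behaviour in terms of the subgroup $MT$; since the one-dimensional $T$-direction contributes only the commuting spherical grading by $\lambda_{\mathrm{sph}}$, the criterion collapses to the multiplicity-freeness of $\pi^K_\mu|_M$, that is, to commutativity of $\End_M(V^K_\mu)$, for every $\mu\in F$. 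Treating the whole facet at once is natural here because $F\cap P_K^+$ is a sub-semigroup and the criterion applies to the entire multigraded algebra. In the symmetric cases this reduction is the classical analysis of Kostant and Camporesi \cite{Kostant},\cite{Camporesi1} via the restricted root system, which offers an independent confirmation of the equivalence.
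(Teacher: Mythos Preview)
Most of what you wrote is not a proof but a diagnosis of why the Hecke-algebra/Gelfand-trick approach does not go through; that is fine as motivation, but it occupies three of your four paragraphs and ends with you conceding that neither restriction to $T$ nor evaluation at a regular point is an algebra map, so no argument has been produced.

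Your final paragraph does point at the right mechanism --- complexify and use the Vinberg--Kimelfeld criterion that multiplicity-freeness is equivalent to a Borel subgroup having an open orbit --- and this is exactly the route the paper takes. But as written it is only a pointer: you invoke a ``local structure theorem'' and assert that ``the one-dimensional $T$-direction contributes only the commuting spherical grading'', without saying what variety is being acted on, which Borel is used, or why the $T$-direction factors out. That last step is the whole content of the proposition, so this is a genuine gap.

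Here is how the paper makes it precise. Let $P_c\subset K_c$ be the parabolic whose Levi is the stabilizer of the facet $F$. The multiplicity-freeness of $\Ind_K^G(\pi^K_\mu)$ for all $\mu\in F$ is equivalent, by Vinberg--Kimelfeld, to $G_c/P_c$ being a spherical $G_c$-variety; likewise the multiplicity-freeness of $\pi^K_\mu|_M$ for all $\mu\in F$ is equivalent to $K_c/P_c$ being a spherical $M_c$-variety. Now one uses the (Iwasawa-type) decomposition
\[
\mathfrak{g}_c=\mathfrak{k}_c\oplus\mathfrak{t}_c\oplus\mathfrak{n}_c,
\]
valid also in the two non-symmetric rank one cases, and chooses the Borel of $G_c$ in the adapted form $B_{G_c}=B_{M_c}T_cN_c$ with $B_{M_c}$ a Borel of $M_c$. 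Because $(G,K)$ has rank one, $T_cN_c$ already has an open orbit through $eK_c$ in $G_c/K_c$. Hence $B_{M_c}T_cN_c$ has an open orbit on $G_c/P_c$ if and only if $B_{M_c}$ has an open orbit on the fibre $K_c/P_c$. This is the concrete content you were gesturing at with ``the $T$-direction contributes only the spherical grading''; it is the open $T_cN_c$-orbit in $G_c/K_c$ that makes the reduction work, and that is precisely where the rank one hypothesis enters.
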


\begin{proof}
Let us complexify all our compact Lie groups $G,K,M,T$ to complex reductive algebraic groups $G_c,K_c,M_c,T_c$.
The statement of the proposition translates into the following geometric statement.
For $P_c$ the parabolic subgroup of $K_c$ with Levi component the stabilizer of $F$
the variety $G_c/P_c$ is spherical for $G_c$ if and only if $K_c/P_c$ is spherical for $M_c$.
Here we say that a variety with an action of a reductive group is spherical if the Borel subgroup has an open orbit.
Observe that (also for the non symmetric pairs) we have an infinitesimal Iwasawa decomposition
\[ \mathfrak{g}_c=\mathfrak{k}_c\oplus\mathfrak{t}_c\oplus\mathfrak{n}_c \]
with $\mathfrak{n}_c$ the direct sum of those root spaces $\mathfrak{g}_c^{\alpha}$ for which the restriction of
$\alpha$ to $\mathfrak{t}_c$ is a positive multiple of the restriction of $\lambda_{\mathrm{sph}}$ to $\mathfrak{t}_c$.
Taking the Borel subgroup of $G_c$ of the form $B_{M_{c}}T_cN_c$ with $B_{M_{c}}$ a Borel subgroup for $M_c$
the equivalence of $G_c/P_c$ having an open orbit for $B_{M_{c}}T_cN_c$ is equivalent to $K_c/P_c$
having an open orbit for $B_{M_{c}}$ follows, since the orbit of $T_cN_c$ through $K_c$ is open in $G_c/K_c$.
\end{proof}

Let us take the Cartan subalgebra of $\mathfrak{g}_c$ a direct sum of $\mathfrak{t}_c$
and a Cartan subalgebra of $\mathfrak{m}_c$, and extend a set of positive roots for
$\mathfrak{m}_c$ to a set of positive roots for $\mathfrak{g}_c$.
Let $V^G_{\lambda}$ be an irreducible representation of $G$ with highest weight $\lambda\in P_G^+$.
Because $M_cT_cN_c$ is a standard parabolic subgroup of $G_c$ the vector space
\[ (V^G_{\lambda})^{\mathfrak{n}_c}=\{v\in V^G_{\lambda};Xv=0\;\forall X\in\mathfrak{n}_c\} \]
is an irreducible representation of $M_{c}$ with highest weight $\nu\in P_M^+$.
Clearly $\nu=p(\lambda)$ with $p:P_G^+\rightarrow P_M^+$ the natural projection
along the spherical direction $\N\lambda_{\mathrm{sph}}$.
The Iwasawa decomposition
$\mathfrak{g}_c=\mathfrak{k}_c\oplus\mathfrak{t}_c\oplus\mathfrak{n}_c$ of the above proof gives the
Poincar\'{e}--Birkhoff-Witt factorization $U(\mathfrak{g}_c)=U(\mathfrak{k}_c)U(\mathfrak{t}_c)U(\mathfrak{n}_c)$
and we conclude that $U(\mathfrak{k}_c)(V^G_{\lambda})^{\mathfrak{n}_c}=V^G_{\lambda}$.

\begin{proposition}\label{projection from induced to restricted spectrum}
Let $(G,K,F)$ be a multiplicity free system and let $\mu\in F$.
Then the natural projection $p:P_G^+\rightarrow P_M^+$ is a surjection
from the induced spectrum $P_G^+(\mu)$ for $G$ onto the restricted spectrum
\[ P_M^+(\mu)=\{\nu\in P_M^+;m^{K,M}_{\mu}(\nu)\geq1\} \]
for $M$, and thefore $p:B(\mu)\rightarrow P_M^+(\mu)$ is a bijection.
Note that $m^{K,M}_{\mu}(\nu)\leq1$ for all $\nu\in P_M^+$ by the previous proposition.
\end{proposition}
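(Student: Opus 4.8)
The plan is to reduce everything to showing that $p$ restricts to a bijection $B(\mu)\to P_M^+(\mu)$. Since $p$ is the projection along $\N\lambda_{\mathrm{sph}}$ we have $p(\lambda_{\mathrm{sph}})=0$, so $p$ is constant on each column $\lambda+\N\lambda_{\mathrm{sph}}$ and hence $p(P_G^+(\mu))=p(B(\mu))$; thus surjectivity of $p$ onto $P_M^+(\mu)$ and the asserted bijection are both equivalent to $p|_{B(\mu)}$ being injective and onto $P_M^+(\mu)$. Injectivity is immediate from the well structure: if $\lambda,\lambda'\in B(\mu)$ satisfy $p(\lambda)=p(\lambda')$ then $\lambda-\lambda'\in\R\lambda_{\mathrm{sph}}$, and since $\lambda_{\mathrm{sph}}$ is primitive in $P_G^+$ we get $\lambda'=\lambda+k\lambda_{\mathrm{sph}}$ with $k\in\Z$; were $k\neq0$, one of $\lambda,\lambda'$ would have its $\lambda_{\mathrm{sph}}$-predecessor in $P_G^+(\mu)$, contradicting that both lie in the bottom. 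So it remains to prove the two inclusions $p(B(\mu))\subseteq P_M^+(\mu)$ and $P_M^+(\mu)\subseteq p(B(\mu))$.

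For the inclusion $p(P_G^+(\mu))\subseteq P_M^+(\mu)$ I would argue directly from the two facts recalled just above the proposition: that $W:=(V^G_{\lambda})^{\mathfrak n_c}$ is an irreducible $M_c$-module isomorphic to $\pi^M_{p(\lambda)}$, and that $U(\mathfrak k_c)W=V^G_{\lambda}$. Fix $\lambda\in P_G^+(\mu)$ and let $\pi^K_{\mu'}$ be any $K$-type occurring in $V^G_{\lambda}$, with $M$-equivariant isotypic projection $\mathrm{pr}_{\mu'}$ onto the corresponding component. If $\mathrm{pr}_{\mu'}$ vanished on $W$, then $W$ would lie in the proper $K$-submodule complementary to that isotypic component, and applying $U(\mathfrak k_c)$ would keep it there, contradicting $U(\mathfrak k_c)W=V^G_{\lambda}$. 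Hence $\mathrm{pr}_{\mu'}$ is a nonzero $M$-map out of the irreducible module $W\cong\pi^M_{p(\lambda)}$, so $\pi^M_{p(\lambda)}$ occurs in $\pi^K_{\mu'}|_M$, i.e. $p(\lambda)\in P_M^+(\mu')$. Taking $\mu'=\mu$, which occurs in $V^G_{\lambda}$ precisely because $\lambda\in P_G^+(\mu)$, yields $p(\lambda)\in P_M^+(\mu)$.

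The reverse inclusion $P_M^+(\mu)\subseteq p(B(\mu))$ is the main obstacle, and here the plan is to pass to the geometric model used in the proof of Proposition \ref{prop: reducing to M}. With $P_c<K_c$ the parabolic stabilizing the facet $F$, induction in stages along the bundle $G_c/P_c\to G_c/K_c$ with fibre $K_c/P_c$ identifies $\Ind_K^G\pi^K_{\mu}$ with $H^0(G_c/P_c,\calL_{\mu})$, where $H^0(K_c/P_c,\calL_{\mu})=\pi^K_{\mu}$. Taking $\mathfrak n_c$-invariants and using $(V^G_{\lambda})^{\mathfrak n_c}=\pi^M_{p(\lambda)}$ together with multiplicity freeness gives $(\Ind_K^G\pi^K_{\mu})^{\mathfrak n_c}=\bigoplus_{\lambda\in P_G^+(\mu)}\pi^M_{p(\lambda)}$, whose set of $M_c$-types is exactly $p(B(\mu))$. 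I would then compute the same invariants through the open orbit $T_cN_c\cdot eK_c$ in $G_c/K_c$, whose openness is exactly the input established in the proof of Proposition \ref{prop: reducing to M}. Since an $N_c$-invariant section is determined along this orbit by its restriction to the fibre over $eK_c$, evaluation there produces an $M_c$-equivariant map $(\Ind_K^G\pi^K_{\mu})^{\mathfrak n_c}\to\pi^K_{\mu}$, and the goal is to show it is onto $\pi^K_{\mu}|_M$ in sufficiently high $T_c$-degree, so that every $\nu\in P_M^+(\mu)$ is realized as some $p(\lambda)$.

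The hard part is precisely this surjectivity in the last step: prescribing an $N_c$-invariant section freely on the fibre over the open orbit is unproblematic, but one must then extend it to a regular global section on all of $G_c/P_c$, and controlling this extension across the boundary of the open orbit is the real content; twisting in the ample $\lambda_{\mathrm{sph}}$-direction, which merely shifts the $T_c$-degree and leaves $p$ unchanged, is what eventually makes enough sections available. Granting this, injectivity together with the two inclusions shows that $p\colon B(\mu)\to P_M^+(\mu)$ is a bijection, and the surjectivity of $p\colon P_G^+(\mu)\to P_M^+(\mu)$ then follows because $p$ collapses each column to its bottom point.
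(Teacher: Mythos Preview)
Your reduction to the bijection $p|_{B(\mu)}$ and the injectivity argument are fine, and your proof of the inclusion $p(P_G^+(\mu))\subseteq P_M^+(\mu)$ is essentially the paper's: there it is phrased via the unitary structure (a nonzero $u\in(V^G_\lambda)^{\mathfrak n_c}$ cannot be orthogonal to every vector of an irreducible $K$-summand $V$, else $U(\mathfrak k_c)u$ would miss $V$), but that is the same projection argument you give.

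The gap is in the reverse inclusion. You yourself flag that extending an $N_c$-invariant section across the boundary of the open $T_cN_c$-orbit is ``the real content'' and then write ``Granting this''; so the key step is left unproven. The paper does not attempt a direct geometric extension argument here at all. Instead it defers surjectivity to Proposition~\ref{prop: asymptotics}, namely the monotonicity $m^{G,K}_{\lambda+k\lambda_{\mathrm{sph}}}(\mu)\le m^{G,K}_{\lambda+s\lambda_{\mathrm{sph}}}(\mu)$ for $k\le s$ together with the asymptotic identity
\[
\lim_{n\to\infty}m^{G,K}_{\lambda+n\lambda_{\mathrm{sph}}}(\mu)=m^{K,M}_{\mu}(p(\lambda)).
\]
Given $\nu\in P_M^+(\mu)$ one picks any $\lambda\in P_G^+$ with $p(\lambda)=\nu$; the limit is then $\ge1$, so $\lambda+n\lambda_{\mathrm{sph}}\in P_G^+(\mu)$ for $n$ large and $p(\lambda+n\lambda_{\mathrm{sph}})=\nu$. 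The proof of Proposition~\ref{prop: asymptotics} in turn quotes Kostant and Wallach for symmetric pairs, Kitagawa for general spherical pairs, and verifies the two non-symmetric rank-one cases by hand from the explicit branching computations of \S\S\ref{G2} and \ref{Spin7}.

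Your geometric sketch is in the right spirit and is close to the spherical-variety reasoning behind Kitagawa's stability result, but as written it is an outline rather than a proof: the extension step after twisting by powers of $\lambda_{\mathrm{sph}}$ is precisely the content that the asymptotic multiplicity proposition packages, and it requires an actual argument.
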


\begin{proof}
Let $\langle\cdot,\cdot\rangle$ be a unitary structure on $V^G_{\lambda}$ for $G$.
Suppose $V$ is an irreducible subrepresentation of $K$ in the restriction of $V^G_{\lambda}$ to $K$.
If $u$ is a nonzero vector in $(V^G_{\lambda})^{\mathfrak{n}_c}$ then $\langle u,v\rangle\neq0$
for some $v\in V$. Indeed $\langle u,v\rangle=0$ for all $v\in V$ contradicts
$U(\mathfrak{k}_c)(V^G_{\lambda})^{\mathfrak{n}_c}=V^G_{\lambda}$.
Hence the restriction of $V$ to $M$ contains a copy of $V^M_{p(\lambda)}$ by Schur's Lemma.
One of the subspaces $V$ is a copy of $V^K_{\mu}$, and so $m_{\mu}(p(\lambda))\geq1$.
This proves that the natural projection $p$ maps the induced spectrum $P_G^+(\mu)$ of $G$
inside the restricted spectrum $P_M^+(\mu)$ of $M$.

It remains to show that
\[ p:P_G^+(\mu)\rightarrow P_M^+(\mu) \]
is onto for all $\mu\in F$. This follows from Proposition \ref{prop: asymptotics}.
\end{proof}


\begin{proposition}\label{prop: asymptotics}
Let $\lambda\in P^{+}_{G}$, $\mu\in P^{+}_{K}$ and let $p:P_G^+\rightarrow P_M^+$ be the natural projection. Then
\begin{itemize}
\item $m^{G,K}_{\lambda+k\lambda_{\sph}}(\mu)\le m^{G,K}_{\lambda+s\lambda_{\sph}}(\mu)$ if $k\le s$ and
\item $\lim_{n\to\infty}m^{G,K}_{\lambda+n\lambda_{\sph}}(\mu)=m^{K,M}_{\mu}(p(\lambda)).$
\end{itemize}
\end{proposition}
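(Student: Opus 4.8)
The plan is to rewrite both assertions in terms of the Hom-spaces $\Hom_K(V^K_\mu,V^G_\lambda)$, whose dimension is $m^{G,K}_\lambda(\mu)$ by Frobenius reciprocity, and to sandwich the sequence $n\mapsto m^{G,K}_{\lambda+n\lambda_{\sph}}(\mu)$ between a monotonicity statement (which gives the first bullet and the existence of the limit) and the constant upper bound $m^{K,M}_\mu(p(\lambda))$; the limit is then forced to equal this bound by producing enough maps in the reverse direction.

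First I would prove $m^{G,K}_{\lambda}(\mu)\le m^{G,K}_{\lambda+\lambda_{\sph}}(\mu)$, whence the first bullet by iteration. Since $V^G_{\lambda_{\sph}}$ is spherical it carries a nonzero $K$-fixed vector $v_0$, which under the Borel--Weil isomorphism $V^G_{\lambda_{\sph}}\cong H^0(G_c/B_c,L_{\lambda_{\sph}})$ corresponds to a nonzero holomorphic section $s_0$. As $v_0$ is $K$-fixed, the composite
\[ \psi\colon V^G_\lambda\longrightarrow V^G_\lambda\otimes V^G_{\lambda_{\sph}}\longrightarrow V^G_{\lambda+\lambda_{\sph}},\qquad v\mapsto v\otimes v_0\mapsto \mathrm{pr}(v\otimes v_0), \]
of $v\mapsto v\otimes v_0$ with the Cartan projection $\mathrm{pr}$ is $K$-equivariant, and under Borel--Weil it is the multiplication of sections $s\mapsto s\cdot s_0$. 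This is injective because $G_c/B_c$ is an integral variety and $s_0\not\equiv0$: a product of nonzero sections of line bundles on an integral variety is nonzero. Post-composition with an injective $K$-map is injective on $\Hom_K(V^K_\mu,-)$, so $m^{G,K}_\lambda(\mu)\le m^{G,K}_{\lambda+\lambda_{\sph}}(\mu)$; in particular the sequence in the second bullet is non-decreasing and its limit exists.

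Next I would establish the uniform bound $m^{G,K}_\lambda(\mu)\le m^{K,M}_\mu(p(\lambda))$ for every $\lambda$, applied to $\lambda+n\lambda_{\sph}$ (note $p(\lambda+n\lambda_{\sph})=p(\lambda)$). Grade $V^G_\lambda$ by $\mathfrak{t}_c$-weight; the top piece is exactly $(V^G_\lambda)^{\mathfrak{n}_c}\cong V^M_{p(\lambda)}$ (it sits in a single $\mathfrak{t}_c$-degree, as $\mathfrak{t}_c$ acts by a scalar on this irreducible $M_c$-module), and the projection $\pi_{\mathrm{top}}\colon V^G_\lambda\to V^M_{p(\lambda)}$ onto it is $M$-equivariant since $M$ centralises $T$. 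Sending a $K$-embedding $\iota\colon V^K_\mu\hookrightarrow V^G_\lambda$ to the $M$-map $\pi_{\mathrm{top}}\circ\iota$ defines a linear map $\Theta\colon\Hom_K(V^K_\mu,V^G_\lambda)\to\Hom_M(V^K_\mu,V^M_{p(\lambda)})$, and the target has dimension $m^{K,M}_\mu(p(\lambda))$. I claim $\Theta$ is injective: if $\iota\ne0$ then $W:=\iota(V^K_\mu)$ is a nonzero $\mathfrak{k}_c$-stable subspace, so let $J$ be the top $\mathfrak{t}_c$-degree occurring in $W$ and pick $w\in W$ with nonzero degree-$J$ component $w_J$. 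Because $\theta$ interchanges $\mathfrak{g}_c^{\beta}$ and $\mathfrak{g}_c^{-\beta}$, the degree-raising parts of $\mathfrak{k}_c$ surject onto the positive restricted root spaces; applying such an element $\xi\in\mathfrak{k}_c$ to $w$ and reading off the degree-$(J{+}1)$ (resp.\ degree-$(J{+}2)$) component, which must vanish since $\xi w\in W$ has no component above degree $J$, forces $\mathfrak{n}_c w_J=0$. Hence $w_J\in(V^G_\lambda)^{\mathfrak{n}_c}$, which lies in the top $\mathfrak{t}_c$-degree; as $w_J\ne0$ this forces $J$ to be that top degree, so $\Theta(\iota)=\pi_{\mathrm{top}}\circ\iota\ne0$. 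Thus $m^{G,K}_\lambda(\mu)\le m^{K,M}_\mu(p(\lambda))$, and the limit in the second bullet is at most $m^{K,M}_\mu(p(\lambda))$.

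It remains to prove the matching lower bound, and this is where the real work lies. Assembling the injective $K$-maps $\psi_n\colon V^G_{\lambda+n\lambda_{\sph}}\hookrightarrow V^G_{\lambda+(n+1)\lambda_{\sph}}$ into a tower, the limit multiplicity equals the multiplicity of $V^K_\mu$ in the direct limit $V_\infty=\varinjlim_n V^G_{\lambda+n\lambda_{\sph}}$; under Borel--Weil, inverting $s_0$ identifies $V_\infty$ with the sections of $L_\lambda$ over the $K$-stable open set $U=(G_c/B_c)\setminus Z(s_0)$. The plan is to match the $K$-isotypic structure of $V_\infty$ with that of $\Ind_M^K V^M_{p(\lambda)}$, whose $\mu$-multiplicity is $\dim\Hom_M(V^K_\mu,V^M_{p(\lambda)})=m^{K,M}_\mu(p(\lambda))$ by Frobenius reciprocity for $M<K$; this meets the upper bound and closes the argument. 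Concretely one expects that $U$ retracts $K$-equivariantly onto the closed $K$-orbit $K/M$ in $G_c/B_c$ and that the localisation of $L_\lambda$ there has fibre $V^M_{p(\lambda)}$. The hard part will be exactly this identification of the stable limit with the $M$-induced representation; it is the geometric incarnation of the Kostant--Heckman branching formula, which computes the stable branching multiplicity in the $\lambda_{\sph}$-direction as the $M$-branching multiplicity, and I would fall back on that formula (as cited in the paper) should the direct-limit identification prove too delicate to carry out by hand.
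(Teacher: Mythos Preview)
Your monotonicity argument for the first bullet is correct and is exactly the paper's argument, with the welcome addition that you spell out why the composite $\psi$ is injective via Borel--Weil (multiplication by a nonzero holomorphic section on the integral variety $G_c/B_c$); the paper states this more tersely.

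For the second bullet you take a different route. The upper bound via injectivity of your map $\Theta$ is a clean argument the paper does not give, but there is a gap: you invoke a Cartan involution $\theta$ to show that the positive $\mathfrak{t}_c$-degree components of $\mathfrak{k}_c$ surject onto $\mathfrak{n}_c$, and two of the rank-one pairs under consideration, $(\G_2,\SU(3))$ and $(\Spin(7),\G_2)$, are \emph{not} symmetric and carry no such $\theta$. The repair exists---for each $Z$ in a positive restricted root space there is still a unique $Z'$ in a negative one with $Z+Z'\in\mathfrak{k}_c$, the substitute for $I+\theta$ mentioned in \S\ref{DEs}---and with this in hand your descent on the top degree $J$ goes through (noting also that for $(\G_2,\SU(3))$ there are three positive restricted levels, not two, so one processes degrees $J{+}3,J{+}2,J{+}1$ in turn).

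The lower bound, however, is not established. Your direct-limit picture is only an outline, and the fallback you name is not quite the right one: the stability of branching multiplicities along the spherical direction is due to Kostant and Wallach for symmetric pairs and to Kitagawa for spherical pairs, and these are what the paper cites. The paper then goes further for the two non-symmetric pairs and verifies the limit independently by explicit computation: using the branching rules worked out in \S\ref{G2} and \S\ref{Spin7}, one lets the spherical coordinate of $\lambda$ tend to infinity, watches the constraints coming from all but two of the projected orbit points $q(w_i\lambda)$ run off to infinity, and matches the surviving multiplicity expression to the $K\downarrow M$ branching. Your proposal contains no analogue of this step, so as written the second bullet is not proved for the non-symmetric pairs.
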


\begin{proof} Every irreducible $K$-representation that occurs in the $K$-module $V_{\lambda}$ also occurs in the $K$-module $V_{\lambda+\lambda_{\sph}}$. Indeed, let $v_{K}\in V_{\lambda_{\sph}}$ be a non-zero $K$-fixed vector and consider the composition of $V_{\lambda}\to V_{\lambda}\otimes V_{\lambda_{\sph}}:v\mapsto v\otimes v_{K}$ and the projection $V_{\lambda}\otimes V_{\lambda_{\sph}}\to V_{\lambda+\lambda_{\sph}}$. Both maps intertwine the $K$-action and the first statement follows.

For $(G,K)$ a symmetric pair (even of arbitrary rank)
the second statement is a result of Kostant \cite[Thm.~3.5]{Kostant} and Wallach \cite[Cor.~8.5.15]{Wallach}. For spherical pairs $(G,K)$ a similar stability result is shown by Kitagawa \cite[Cor.~4.10]{Kitagawa}. However, since we have control over the branching rules of the remaining non-symmetric pairs, we present our own proof. 

Consider the triple $(G,K,M)=(\G_2,\SU(3),\SU(2))$. Let $\lambda=n_1\varpi_1+n_2\varpi_2\in P_G^+$ with $n_1$ relatively large, $\mu=m_1\omega_1+m_2\omega_2\in P_K^+$ and $\nu=n_2p(\varpi_2)\in P_M^+$. 
On the one hand, we find 
\[ m^{G,K}_{\lambda}(\mu)=\min\{m_1+1,m_2+1,m_1+m_2-n_2+1,n_2+1\} \]
as in clear from the left side of Figure \ref{intro: figure G2A2 branching}. Indeed $m_1+1$ comes from the disctance
of $\mu$ to the face $\N\omega_1$, and similarly $m_2+1$ for the face $\N\omega_2$. 
The expression $m_1+m_2-n_2+1$ comes from the middle linear constraint,
while $n_2+1$ comes from the middle truncation. 
The other three constraints disappear as $n_1$ gets large. 
On the other hand we get 
\[ m^{K,M}_{\mu}(\nu)=\min\{n_2+1,\min\{m_1,m_2\}+1,m_1+m_2-n_2+1\} \]
as easily checked from the familiar branching from $\SU(3)$ to $\SU(2)$. It follows that $m^{G,K}_{\lambda}(\mu)=m^{K,M}_{\mu}(\nu)$ for large $n_{1}$.

The proof for the case $(\Spin(7),\G_{2},\SU(3))$ is postponed to the end of Section \ref{Spin7}, where we discuss the branching rules that are needed.
\end{proof}


\section{The pair $(G,K)=(\Spin(7),\mathrm{G}_2)$}\label{Spin7}

In this section we take $G=\Spin(7)$ with complexified Lie algebra $\mathfrak{g}$ of type $\mathrm{B}_3$.
Let $\mathfrak{t}_G\cong\C^3$ be a Cartan subalgebra with positive roots $R_G^+$ given by
\[ e_i-e_j,\;e_i+e_j,\;e_i \]
for $1\leq i<j\leq3$, and basis of simple roots $\alpha_1=e_1-e_2,\alpha_2=e_2-e_3,\alpha_3=e_3$.
The fundamental weights $\varpi_1=e_1,\varpi_2=e_1+e_2,\varpi_3=(e_1+e_2+e_3)/2$ are a basis
over $\N$ for the cone $P_G^+$ of dominant weights.

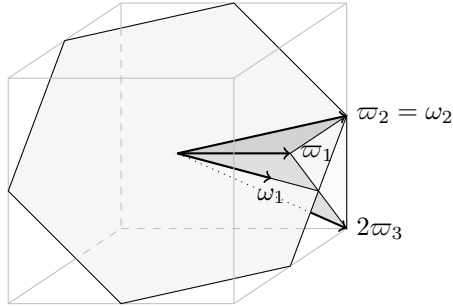
\begin{figure}[ht]
\begin{center}
\begin{tikzpicture}[scale=.5]
\begin{scope}

\clip (-6,-6) rectangle (8,6);


\draw[thin,lightgray] (9/2,-2) -- (3/2,-4);
\draw[thin,lightgray] (-3/2,4) -- (-9/2,2);
\draw[thin,lightgray] (-9/2,-4) -- (-27/10,-14/5);
\draw[thin,lightgray] (3/2,-4) -- (-9/2,-4) -- (-9/2,2);
\draw[thin,lightgray] (-3/2,4) -- (9/2,4);
\draw[thin,lightgray] (9/2,4) -- (9/2,-2);
\draw[thin,lightgray] (9/2,-2) -- (27/8,-2);

\draw[fill, lightgray!15] (0,0) -- (15/4,-1) -- (3,-3) -- (-3/2,-4) -- (-9/2,-1) -- (-3,3) -- (3/2,4) -- (9/2,1) -- cycle;
\draw[thin] (99/28,-11/7) -- (3,-3) -- (-3/2,-4) -- (-9/2,-1) -- (-3,3) -- (3/2,4) -- (9/2,1) ;

\draw[fill,gray!35] (0,0) -- (9/2,1) -- (3,0) -- cycle;

\draw[fill,gray!25] (0,0) -- (3,0) -- (15/4,-1) -- cycle;
\draw[fill,gray!25] (15/4,-1) -- (9/2,-2) -- (99/28,-11/7) -- cycle;

\def\mypath{(15/4,-1) -- (3,0) -- (9/2,1) --cycle}
\fill [lightgray!15] \mypath;

\draw[fill,lightgray!5] (15/4,-1) -- (9/2,1) -- (9/2,-2) --cycle;
\draw[thin] (15/4,-1) -- (3,0) -- (9/2,1) --cycle;

\draw[thin] (0,0) -- (15/4,-1);
\draw[thick,->] (0,0) -- (3,0) node[right] {\(\varpi_{1}\)};
\draw[thick,->] (0,0) -- (9/2,1) node[right] {\(\varpi_{2}=\omega_{2}\)};
\draw[thick,->] (0,0) -- (5/2,-2/3) node[below] {\(\omega_{1}\)};
\draw[thin] (9/2,1) -- (9/2,-2) -- (15/4,-1) -- (99/28,-11/7);

\draw[dotted] (0,0) -- (99/28,-11/7);
\draw[thick,->] (99/28,-11/7) -- (9/2,-2) node[right]{\(2\varpi_{3}\)};

\draw[thin,lightgray] (9/2,4) -- (3/2,2);
\draw[thin,lightgray] (-9/2,2) -- (3/2,2) -- (3/2,-4);

\draw[thin,lightgray] (-3/2,4) -- (-3/2,10/3);

\draw[dashed,lightgray] (-3/2,10/3) -- (-3/2,-2) -- (-27/10,-14/5);

\draw[dashed,lightgray] (-3/2,-2) -- (17/5,-2);

\end{scope}

\end{tikzpicture}
\end{center}
\caption{Fundamental weights for $\Spin(7)$ and $\G_{2}$.}

\end{figure}

As the Cartan subalgebra $\mathfrak{t}_K$ for $K=\mathrm{G}_2$ we shall take the orthogonal complement of $h=(-e_1+e_2+e_3)$.
The elements $e_1+e_3,e_1+e_2,e_2-e_3$ are the long positive roots in $R_K^+$, while
\[ \epsilon_1=(2e_1+e_2+e_3)/3,\;\epsilon_2=(e_1+2e_2-e_3)/3,\;\epsilon_3=(e_1-e_2+2e_3)/3 \]
are the short positive roots in $R_K^+$. The natural projection $q:R_G^+\rightarrow P_K^+$
is a bijection onto the long roots and two to one onto the short roots in $R_K^+$.
Note that $\epsilon_i=q(e_i)$ for $i=1,2,3$.
The simple roots in $R_K^+$ are $\{\beta_1=\epsilon_3,\beta_2=\epsilon_2-\epsilon_3\}$ with corresponding
fundamental weights $\{\omega_1=\epsilon_1,\omega_2=\epsilon_1+\epsilon_2\}$.
Observe that $\omega_1=q(\varpi_1)=q(\varpi_3)$ and $\omega_2=q(\varpi_2)$,
and hence $q:P_G^+\rightarrow P_K^+$ is a surjection.
Note that the natural projection $q:P_G\rightarrow P_K$ is equivariant for the action of
the Weyl group $W_M\cong\mathfrak{S}_3$ of the centralizer $M=\SU(3)$ in $K$ of $h$.
The Weyl group $W_G$ is the semidirect product of $C_2\times C_2\times C_2$
acting by sign changes on the three coordinates and the permutation group $\mathfrak{S}_3$.

As a set with multiplicities we have
\[ A=q(R_G^+)-R_K^+=\{\epsilon_1,\epsilon_2,\epsilon_3\} \]
whose partition function $p_A$ enters in the formula for the branching from $\mathrm{B}_3$ to $\mathrm{G}_2$.
Note that $p_A(k\epsilon_1+l\epsilon_2)=p_A(k\epsilon_1+m\epsilon_3)=k+1$ for $k,l,m\in \N$
and $p_A(\mu)=0$ otherwise.

\begin{lemma}\label{branching rule lemma}
For $\lambda\in P_G^+$ and $\mu\in P_K^+$ the multiplicity $m_{\lambda}^{G,K}(\mu)\in \N$
with which an irreducible representation of $K$ with highest weight $\mu$ occurs in the
restriction to $K$ of an irreducible representation of $G$ with highest weight $\lambda$
is given by
\[ m_{\lambda}^{G,K}(\mu)=\sum_{w\in W_G}\det(w)p_A(q(w(\lambda+\rho_G)-\rho_G)-\mu) \]
and if we extend $m_{\lambda}^{G,K}(\mu)\in \Z$ by this formula for all $\lambda\in P_G$ and $\mu\in P_K$ then
\[ m_{w(\lambda+\rho_G)-\rho_G}^{G,K}(v(\mu+\rho_K)-\rho_K)=\det(w)\det(v)m_{\lambda}^{G,K}(\mu) \]
for all $w\in W_G$ and $v\in W_K$. Here $\rho_G$ and $\rho_K$ are the Weyl vectors of $R_G^+$ and $R_K^+$ respectively.
\end{lemma}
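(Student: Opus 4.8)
The plan is to read the statement off Weyl's character formula for $G$ restricted to the maximal torus $T_K$ of $K$ (with Lie algebra $\mathfrak{t}_K$), i.e.\ to specialize the Kostant--Heckman branching formula to the projection $q$. Let $\chi_\lambda$ denote the character of $V^G_\lambda$, with Weyl numerator $\sum_{w\in W_G}\det(w)e^{w(\lambda+\rho_G)}$ and denominator $\delta_G=\prod_{\alpha\in R_G^+}(e^{\alpha/2}-e^{-\alpha/2})$. Since restricting functions from the maximal torus of $G$ to $T_K$ amounts on weights to applying $q$, one has $\chi_\lambda|_{T_K}=\Psi_\lambda/(\delta_G|_{T_K})$ with $\Psi_\lambda=\sum_{w\in W_G}\det(w)e^{q(w(\lambda+\rho_G))}$. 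The first and geometrically essential step is the denominator factorization. The explicit description of $q$ preceding the lemma shows that $q$ sends every $\alpha\in R_G^+$ to a positive root of $K$ and realizes the multiset identity $q(R_G^+)=R_K^+\sqcup A$; grouping the factors accordingly gives
\[ \delta_G|_{T_K}=\Big(\prod_{\beta\in R_K^+}(e^{\beta/2}-e^{-\beta/2})\Big)\Big(\prod_{a\in A}(e^{a/2}-e^{-a/2})\Big)=\delta_K\,\delta_A, \]
where $\delta_K$ is the Weyl denominator of $K$ and $\delta_A$ the second product.

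I would then expand $\delta_A^{-1}$ as a formal series. With $\rho_A=\tfrac12\sum_{a\in A}a$ we have $\delta_A^{-1}=e^{-\rho_A}\sum_{\xi}p_A(\xi)e^{-\xi}$, so
\[ \delta_K\,\chi_\lambda|_{T_K}=\frac{\Psi_\lambda}{\delta_A}=\sum_{w\in W_G}\sum_{\xi}\det(w)\,p_A(\xi)\,e^{\,q(w(\lambda+\rho_G))-\rho_A-\xi}. \]
At this point I would record the numerical identity $q(\rho_G)=\rho_K+\rho_A$ (a direct computation gives $\rho_A=\epsilon_1$ and $q(\rho_G)-\rho_K=\epsilon_1$), which makes the exponent equal to $\mu+\rho_K$ exactly when $\xi=q(w(\lambda+\rho_G)-\rho_G)-\mu$. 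Extracting the coefficient of $e^{\mu+\rho_K}$ then identifies it with $\widetilde m_\lambda(\mu):=\sum_{w\in W_G}\det(w)\,p_A(q(w(\lambda+\rho_G)-\rho_G)-\mu)$, so that $\delta_K\,\chi_\lambda|_{T_K}=\sum_{\mu\in P_K}\widetilde m_\lambda(\mu)\,e^{\mu+\rho_K}$.

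To see that $\widetilde m_\lambda(\mu)$ is the branching multiplicity for dominant $\mu$, I would multiply the branching identity $\chi_\lambda|_{T_K}=\sum_{\mu\in P_K^+}m^{G,K}_\lambda(\mu)\,\chi^K_\mu$ by $\delta_K$ and use Weyl's formula for $K$ to rewrite the right-hand side as $\sum_{\mu\in P_K^+}m^{G,K}_\lambda(\mu)\sum_{v\in W_K}\det(v)e^{v(\mu+\rho_K)}$. For $\mu\in P_K^+$ the weight $\mu+\rho_K$ is regular dominant, hence the unique element of the form $\nu+\rho_K$ in its $W_K$-orbit, so comparing the coefficients of $e^{\mu+\rho_K}$ on the two sides yields $\widetilde m_\lambda(\mu)=m^{G,K}_\lambda(\mu)$, which is the asserted formula.

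It remains to establish the two covariances of the $\Z$-valued extension $\widetilde m$. The covariance in $\lambda$ is formal: substituting $\lambda\mapsto w(\lambda+\rho_G)-\rho_G$ and reindexing the sum by $w'\mapsto w'w$ produces the factor $\det(w)$. For the covariance in $\mu$ the key point is that $\chi_\lambda|_{T_K}$ is the character of the genuine $K$-module $V^G_\lambda|_K$ and is therefore $W_K$-invariant, whence $\delta_K\,\chi_\lambda|_{T_K}$ is $W_K$-alternating; reading off coefficients in $\sum_\mu\widetilde m_\lambda(\mu)e^{\mu+\rho_K}$ gives $\widetilde m_\lambda(v(\mu+\rho_K)-\rho_K)=\det(v)\widetilde m_\lambda(\mu)$ for dominant $\lambda$, and the $W_G$-covariance then extends this to all $\lambda\in P_G$ (both sides vanishing when $\lambda+\rho_G$ is $W_G$-singular). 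Combining the two gives the stated $W_G\times W_K$-equivariance. I expect the main obstacle to be the bookkeeping behind the factorization $\delta_G|_{T_K}=\delta_K\delta_A$ — checking that every positive root of $G$ lands on a positive root of $K$ with the right multiplicity so that $q(R_G^+)=R_K^+\sqcup A$ holds on the nose — together with pinning down $\rho_A=\epsilon_1$ and the identity $q(\rho_G)=\rho_K+\rho_A$ that produces the compact form of the formula.
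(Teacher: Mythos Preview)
Your argument is correct and is exactly the derivation the paper has in mind: the text does not give its own proof but simply records that the lemma ``was obtained in \cite{Heckman} as a direct application of the Weyl character formula,'' and your restriction of $\chi_\lambda$ to $T_K$, factorization $\delta_G|_{T_K}=\delta_K\delta_A$ via the multiset identity $q(R_G^+)=R_K^+\sqcup A$, and expansion of $\delta_A^{-1}$ through $p_A$ reproduce that argument verbatim. The checks $\rho_A=\tfrac12(\epsilon_1+\epsilon_2+\epsilon_3)=\epsilon_1$ and $q(\rho_G)=\rho_K+\rho_A$ are right, and your handling of the $W_G\times W_K$ covariance (the $W_G$ part by reindexing, the $W_K$ part from $W_K$-invariance of the restricted character, then extended to all $\lambda$ via the first) is the standard one.
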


This lemma was obtained in Heckman \cite{Heckman} as a direct application of the Weyl charcter formula.
The above type formula, valid for any pair $K<G$ of connected compact Lie groups \cite{Heckman},
might be cumbersome for practical computations of the multiplicities, because of the (possibly large) alternating
sum over a Weyl group $W_G$ and the piecewise polynomial behaviour of the partition function.
However in the present (fairly small) example one can proceed as follows.

If $\lambda=k\varpi_1+l\varpi_2+m\varpi_3=klm=(x,y,z)$ with
\[ x=k+l+m/2,y=l+m/2,z=m/2\Leftrightarrow k=x-y,l=y-z,m=2z \]
then $\lambda$ is dominant if $k,l,m\geq0$ or equivalently $x\geq y\geq z\geq0$.
We tabulate the $8$ elements $w_1,\cdots,w_8\in W_G$ such that the projection
$q(w_i\lambda)\in\N\epsilon_1+\N\epsilon_2$ is dominant for $R_M^+$ for all $\lambda$ which are dominant for $R_G^+$.
Clearly the projection of $(x,y,z)$ is given by
\[ q(x,y,z)=x\epsilon_1+y\epsilon_2+z\epsilon_3=(x+z)\epsilon_1+(y-z)\epsilon_2 \]
and $\rho_G=\varpi_1+\varpi_2+\varpi_3=(2\tfrac12,1\tfrac12,\tfrac12)$ is the Weyl vector for $R_G^+$.

\begin{table}[ht]
\begin{center}
\begin{tabular}{|l|l|l|l|l|} \hline
$i$&$\det(w_i)$&$w_i\lambda$&$q(w_i\lambda)$&$q(w_i\rho_G-\rho_G)$\\ \hline
$1$&$+$&$(x,y,z)$&$(x+z)\epsilon_1+(y-z)\epsilon_2$&$0$\\ \hline
$2$&$-$&$(x,y,-z)$&$(x-z)\epsilon_1+(y+z)\epsilon_2$&$-\epsilon_3$\\ \hline
$3$&$+$&$(x,z,-y)$&$(x-y)\epsilon_1+(y+z)\epsilon_2$&$-\epsilon_1-\epsilon_3$\\ \hline
$4$&$-$&$(x,-z,-y)$&$(x-y)\epsilon_1+(y-z)\epsilon_2$&$-\epsilon_1-\epsilon_2-\epsilon_3$\\ \hline
$5$&$-$&$(y,x,z)$&$(y+z)\epsilon_1+(x-z)\epsilon_2$&$-\epsilon_3+0$\\ \hline
$6$&$+$&$(y,x,-z)$&$(y-z)\epsilon_1+(x+z)\epsilon_2$&$-\epsilon_3-\epsilon_3$\\ \hline
$7$&$+$&$(z,x,y)$&$(y+z)\epsilon_1+(x-y)\epsilon_2$&$-\epsilon_3-\epsilon_2$\\ \hline
$8$&$-$&$(-z,x,y)$&$(y-z)\epsilon_1+(x-y)\epsilon_2$&$-\epsilon_3-\epsilon_1-\epsilon_2$\\ \hline
\end{tabular}
\end{center}
\caption{Projection of $w\lambda$ in $P^{+}_{M}$.}
\end{table}

In the picture below the location of the points $q(w_i\lambda)\in P_M^+$, indicated by the number $i$,
with the sign of $\det(w_i)$ attached, is drawn.
Observe that $q(w_1\lambda)=(k+m)\omega_1+l\omega_2\in P_K^+$ for all $\lambda=klm\in P_G^+$.

\begin{figure}[ht]
\begin{center}
\begin{tikzpicture}[scale=.6]


\pgfmathsetmacro\ax{2}
\pgfmathsetmacro\ay{0}
\pgfmathsetmacro\bx{2 * cos(120)}
\pgfmathsetmacro\by{2 * sin(120)}
\pgfmathsetmacro\lax{2*\ax/3 + \bx/3}
\pgfmathsetmacro\lay{2*\ay/3 + \by/3}
\pgfmathsetmacro\lbx{\ax/3 + 2*\bx/3}
\pgfmathsetmacro\lby{\ay/3 + 2*\by/3}


\begin{scope}

\clip (5/3,10/4) circle (5);

\draw[dashed] (0,0) -- (0,10);
\draw[dashed] (0,0)-- (5*\lax,5*\lay+5*\lby);
\draw[dashed] (0,0) -- (7*\lax,7*\lay);

\draw[thick,->] (0,0) -- (\ax,\ay);
\draw[thick,->] (0,0) -- (\bx,\by);
\draw[thick,->] (0,0) -- (-\ax,-\ay);
\draw[thick,->] (0,0) -- (-\bx,-\by);
\draw[thick,->] (0,0) -- (\ax+\bx,\ay+\by);
\draw[thick,->] (0,0) -- (-\ax-\bx,-\ay-\by);

\draw[thick,->] (0,0) -- (\lax,\lay);
\draw (\lax,\lay-1/8) node[right] {\(\eps_{2}\)};
\draw[thick,->] (0,0) -- (\lbx,\lby);
\draw (1/8+\lbx,\lby+1/8) node[above,left]{\(\eps_{1}\)};
\draw[thick,->] (0,0) -- (-\lax,-\lay);
\draw[thick,->] (0,0) -- (-\lbx,-\lby);
\draw[thick,->] (0,0) -- (\lax-\lbx,\lay-\lby);
\draw[thick,->] (0,0) -- (\lbx-\lax,\lby-\lay) node[left] {\(\eps_{3}\)};

\draw[thick, fill=lightgray] (4*\lbx,4*\lby) -- (4*\lbx+1/2*\lax,4*\lby-1/2*\lay) -- (2*\lax+7/2*\lbx,2*\lay+7/2*\lby) -- (2*\lax+8/2*\lbx,2*\lay+8/2*\lby) -- (1/2*\lax+11/2*\lbx,1/2*\lay+11/2*\lby) -- (0*\lax+11/2*\lbx,0*\lay+11/2*\lby) -- cycle;

\draw[fill] (0*\lax+8/2*\lbx,0*\lay+8/2*\lby) circle (2pt) node[left] {\(b+\)};
\draw[fill] (1/2*\lax+8/2*\lbx,-1/2*\lay+8/2*\lby) circle (2pt) node[below] {\(4-\)};
\draw[fill] (2*\lax,2*\lay+7/2*\lby) circle (2pt) node[below] {\(3+\)};

\draw[fill] (2*\lax+8/2*\lbx,2*\lay+8/2*\lby) circle (2pt) node[right] {\(2-\)};
\draw[fill] (1/2*\lax+11/2*\lbx,1/2*\lay+11/2*\lby) circle (2pt) node[above] {\(1+\)};
\draw[fill] (0*\lax+11/2*\lbx,0*\lay+11/2*\lby) circle (2pt) node[left] {\(a-\)};

\draw[thick] (0*\lax+8/2*\lbx,0*\lay+8/2*\lby) -- (2*\lax+8/2*\lbx,2*\lay+8/2*\lby);
\draw[thick] (1/2*\lax+8/2*\lbx,-1/2*\lay+8/2*\lby)--(1/2*\lax+11/2*\lbx,1/2*\lay+11/2*\lby);
\draw[thick] (2*\lax,2*\lay+7/2*\lby)--(0*\lax+11/2*\lbx,0*\lay+11/2*\lby);


\draw[thick, fill=lightgray] (4*\lax,4*\lay) -- (1/2*\lbx+7/2*\lax,1/2*\lby+7/2*\lay) -- (7/2*\lax+4/2*\lbx,7/2*\lay+4/2*\lby) -- (4*\lax+4/2*\lbx,4*\lay+4/2*\lby) -- (11/2*\lax+1/2*\lbx,11/2*\lay+1/2*\lby) -- (11/2*\lax+0/2*\lbx,11/2*\lay+0/2*\lby) -- cycle;

\draw[fill] (4*\lax+0/2*\lbx,4*\lay+0/2*\lby) circle (2pt) node[below] {\(d+\)};
\draw[fill] (7/2*\lax+1/2*\lbx,7/2*\lay+1/2*\lby) circle (2pt) node[left] {\(8-\)};
\draw[fill] (7/2*\lax+2*\lbx,7/2*\lay+2*\lby) circle (2pt) node[left] {\(7+\)};

\draw[fill] (4*\lax+2*\lbx,4*\lay+2*\lby) circle (2pt) node[above] {\(5-\)};
\draw[fill] (11/2*\lax+1/2*\lbx,11/2*\lay+1/2*\lby) circle (2pt) node[right] {\(6+\)};
\draw[fill] (11/2*\lax+0/2*\lbx,11/2*\lay+0/2*\lby) circle (2pt) node[below] {\(c-\)};

\draw[thick] (4*\lax+0/2*\lbx,4*\lay+0/2*\lby) -- (4*\lax+2*\lbx,4*\lay+2*\lby);
\draw[thick] (7/2*\lax+1/2*\lbx,7/2*\lay+1/2*\lby) -- (11/2*\lax+1/2*\lbx,11/2*\lay+1/2*\lby);
\draw[thick] (7/2*\lax+2*\lbx,7/2*\lay+2*\lby) -- (11/2*\lax+0/2*\lbx,11/2*\lay+0/2*\lby);

\end{scope}
\end{tikzpicture}

\end{center}
\caption{Projection of $W_{G}\lambda$ onto $P^{+}_{M}$.}\label{figure: Spin7G2 projection on M chamber}
\end{figure}
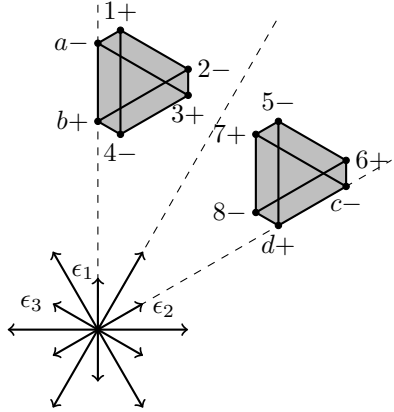Let us denote $a=(k+l+m)\epsilon_1$ and $b=(k+l)\epsilon_1$, and so these two points
together with the four points $q(w_i\lambda)$ for $i=1,2,3,4$ form the vertices of
a hexagon with three pairs of parallel sides. In the picture we have drawn all six
vertices in $P_K^+$, which happens if and only if $q(w_3\lambda)=k\epsilon_1 +(l+m)\epsilon_2\in P_K^+$,
or equivalently if $k\geq(l+m)$. But in general some of the $q(w_i\lambda)\in P_M^+$
for $i=2,3,4$ might lie outside $P_K^+$. Indeed $q(w_2\lambda)=(k+l)\epsilon_1+(l+m)\epsilon_2$
lies outside $P_K^+$ if $k<m$, and $q(w_4\lambda)=k\epsilon_1+l\epsilon_2$ lies outside $P_K^+$ if $k<l$.

For fixed $\lambda\in P_G^+$ the sum $m_{\lambda}(\mu)$ of
the following six partition functions as a function of $\mu\in P_K$
\[ \sum_1^4\det(w)p_A(q(w_i(\lambda+\rho_G)-\rho_G)-\mu)
-p_A(a-\epsilon_2-\mu)+p_A(b-\epsilon_1-\epsilon_2-\mu) \]
is just the familiar multiplicity function for the weight multiplicities of the root system $\mathrm{A}_2$.
It vanishes outside the hexagon with vertices $a$, $b$ and $q(w_i\lambda)$ for $i=1,2,3,4$.
On the outer shell hexagon it is equal to $1$,
and it steadily increases by $1$ for each inner shell hexagon,
untill the hexagon becomes a triangle,
and from that moment on it stabilizes on the inner triangle.
The two partition functions we have added corresponding to the points $a$ and $b$
are invariant as a function of $\mu$ for the action $\mu\mapsto s_2(\mu+\rho_K)-\rho_K$
of the simple reflection $s_2\in W_K$ with mirror $\R\omega_1$, because $s_2(A)=A$.
In order to obtain the final multiplicity function
\[ \mu\mapsto m_{\lambda}^{G,K}(\mu)=\sum_{v\in W_K}\det(v)m_{\lambda}(v(\mu+\rho_K)-\rho_K) \]
for the branching from $G$ to $K$ we have to antisymmetrize for the shifted by $\rho_K$ action of $W_K$.
Note that the two additional partition functions and their transforms under $W_K$
all cancel because of their symmetry and the antisymmetrization.
For $\mu\in P_K^+$ the only terms in the sum over $v\in W_K$ that have a nonzero contribution are
those for $v=e$ the identity element and $v=s_1$ the reflection with mirror $\R\omega_2$,
and we arrive at the following result.

\begin{theorem}\label{(B_3,G_2) multiplicity formula}
For $\lambda\in P_G^+$ and $\mu\in P_K^+$ the branching multiplicity from $G=\Spin(7)$ to $K=\mathrm{G}_2$ is given by
\begin{eqnarray}\label{eq:multB3G2}
m_{\lambda}^{G,K}(\mu)=m_{\lambda}(\mu)-m_{\lambda}(s_1\mu-\epsilon_3)
\end{eqnarray}
with $m_{\lambda}$ the weight multiplicty function of type $A_2$ as given by the
above alternating sum of the six partition functions.
\end{theorem}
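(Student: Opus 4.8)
The plan is to massage the alternating sum of Lemma~\ref{branching rule lemma} into the claimed difference of two $A_2$ weight multiplicities. Writing $\tilde\lambda=\lambda+\rho_G$, the starting point is
\[ m_\lambda^{G,K}(\mu)=\sum_{w\in W_G}\det(w)\,p_A\bigl(q(w\tilde\lambda-\rho_G)-\mu\bigr), \]
a signed sum of $48$ values of the partition function. The first observation I would use is that $A=\{\epsilon_1,\epsilon_2,\epsilon_3\}$ is exactly the set of positive roots of the $A_2$ root system underlying $M=\SU(3)$ (with $\rho_M=\tfrac12(\epsilon_1+\epsilon_2+\epsilon_3)=\epsilon_1$), so that $p_A$ is the Kostant partition function for $M$ and $W_M\cong\mathfrak{S}_3$ acts as the Weyl group of that system. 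Decomposing $W_G$ over the cosets of $W_M$ yields the eight representatives $w_1,\dots,w_8$ whose projections $q(w_i\lambda)$ land in $P_M^+$; these are eight of the twelve vertices of the two hexagons drawn in Figure~\ref{figure: Spin7G2 projection on M chamber}.

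Next I would isolate the intermediate function $m_\lambda$. The claim to verify is that the four contributions from $w_1,\dots,w_4$, corrected by the two extra partition functions attached to the points $a=(k+l+m)\epsilon_1$ and $b=(k+l)\epsilon_1$, assemble into a single $W_M$-alternating Kostant expression, namely
\[ m_\lambda(\mu)=\sum_{i=1}^4\det(w_i)\,p_A\bigl(q(w_i\tilde\lambda-\rho_G)-\mu\bigr)-p_A(a-\epsilon_2-\mu)+p_A(b-\epsilon_1-\epsilon_2-\mu), \]
and that its value is the weight multiplicity of $\mu$ in the $A_2$-module whose weight diagram is the upper hexagon. I would establish this by comparing the two piecewise-polynomial functions chamber by chamber: one checks that the right-hand side vanishes off the hexagon with vertices $a,b,q(w_i\lambda)$, equals one on the outer shell, and increases by one on each inner shell until it stabilises on the central triangle, which is the familiar $A_2$ weight pattern.

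With $m_\lambda$ identified, the branching multiplicity is recovered by antisymmetrising over the full group $W_K=W(\mathrm{G}_2)$, which contains $W_M$ with index two:
\[ m_\lambda^{G,K}(\mu)=\sum_{v\in W_K}\det(v)\,m_\lambda\bigl(v(\mu+\rho_K)-\rho_K\bigr). \]
Here the two correction terms earn their keep: since the simple reflection $s_2\in W_K$ with mirror $\R\omega_1$ fixes $A$ setwise, the partition functions at $a$ and $b$ are invariant under $\mu\mapsto s_2(\mu+\rho_K)-\rho_K$, and therefore cancel against their $s_2$-translates in the alternating sum, contributing nothing. For $\mu\in P_K^+$ the vector $\mu+\rho_K$ is strictly dominant, and a support check on the hexagon shows that of the twelve shifted arguments $v(\mu+\rho_K)-\rho_K$ only those for $v=e$ and for the reflection $s_1$ in $\R\omega_2$ can meet the support of $m_\lambda$; every other $v$ carries the argument out of the hexagon. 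Finally, from $s_1(\omega_1)=\omega_1-\epsilon_3$ and $s_1(\omega_2)=\omega_2$ I get $s_1(\rho_K)=\rho_K-\epsilon_3$, whence $s_1(\mu+\rho_K)-\rho_K=s_1\mu-\epsilon_3$, and the surviving two terms give exactly
\[ m_\lambda^{G,K}(\mu)=m_\lambda(\mu)-m_\lambda(s_1\mu-\epsilon_3). \]

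The hard part will be the bookkeeping of the second step. Turning the crude $48$-term sum into the clean six-term Kostant expression for $m_\lambda$ forces me to track precisely which of the points $q(w_i\lambda)$ for $i=2,3,4$ have left $P_K^+$, which happens when $k<m$ or $k<l$, and to check that the two added partition functions at $a$ and $b$, inserted exactly to repair the count on these boundary walls, restore the correct hexagonal multiplicities in every $M$-chamber. Once that combinatorial identity is secured, the $W_K$-antisymmetrisation, the cancellation of the correction terms, and the support argument that leaves only $v=e$ and $v=s_1$ are all routine.
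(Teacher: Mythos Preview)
Your proposal is correct and follows essentially the same route as the paper: identify $m_\lambda$ as the six-term $A_2$ weight multiplicity attached to the hexagon with vertices $a,b,q(w_i\lambda)$ for $i=1,\dots,4$, observe that the two correction terms are invariant under the shifted $s_2$-action because $s_2(A)=A$ and hence drop out under $W_K$-antisymmetrisation, and then argue by support that for $\mu\in P_K^+$ only $v=e$ and $v=s_1$ survive, with $s_1(\mu+\rho_K)-\rho_K=s_1\mu-\epsilon_3$. The paper presents the same sequence of reductions with the same justifications and at the same level of detail.
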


Indeed, we have $s_1(\mu+\rho_K)-\rho_K=s_1\mu-\epsilon_3$.
As before, we denote $klm=k\varpi_1+l\varpi_2+m\varpi_3$ and $kl=k\omega_1+l\omega_2$ with $k,l,m\in\N$
for the highest weight of irreducible representations of $G$ and $K$ respectively.
For $\mu\in\N\omega_1$ the multiplicities $m_{\lambda}^{G,K}(\mu)$ are only governed by the first term on the right hand side of (\ref{eq:multB3G2}) with $v=e$
and so are equal to $1$ for $\mu=n0$ with $n=(k+l),\cdots,(k+l+m)$ and $0$ elsewhere.
Indeed, $\mu=n0$ has multiplicity one if and only if it is contained in the segment
from $b=(k+l)\epsilon_1$ to $a=(k+l+m)\epsilon_1$. This proves to following statement.

\begin{corollary}
The fundamental representation of $G$ with highest weight $\lambda=001$ is the spin representation
of dimension $8$ with $K$-types $\mu=10$ and $\mu=00$. It is the fundamental spherical representation
for the Gelfand pair $(G,K)$. The irreducible spherical representations of $G$
have highest weights $00m$ with $K$-spectrum the set $\{n0;0\leq n\leq m\}$.
\end{corollary}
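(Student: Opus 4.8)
The plan is to derive everything from the branching formula (\ref{eq:multB3G2}) specialised to $\lambda=00m$, supplemented by standard structure theory of $\mathrm{B}_3$. First I would recall that the third fundamental weight $\varpi_3=(e_1+e_2+e_3)/2$ of $G=\Spin(7)$ is the highest weight of the spin representation, of dimension $2^3=8$ (immediate from the Weyl dimension formula); the determination of its $K$-types is then the case $m=1$ of the general computation below.

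The core step is to compute the full $K$-spectrum of $V^G_{00m}$ for arbitrary $m$. Setting $k=l=0$ in the notation preceding Theorem \ref{(B_3,G_2) multiplicity formula}, the two points $a=(k+l+m)\epsilon_1=m\epsilon_1$ and $b=(k+l)\epsilon_1=0$ coincide with $q(w_1\lambda)$ and $q(w_4\lambda)$, while $q(w_2\lambda)=q(w_3\lambda)=m\epsilon_2$; hence the supporting hexagon of the $\mathrm{A}_2$ weight-multiplicity function $m_\lambda$ collapses to the triangle $\Delta=\mathrm{conv}(0,m\epsilon_1,m\epsilon_2)$, on which $m_\lambda\equiv1$, since a triangular $\mathrm{A}_2$ pattern has all multiplicities equal to one. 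I would then work in the basis $(\epsilon_2,\epsilon_3)$, using the relation $\epsilon_1=\epsilon_2+\epsilon_3$: the triangle becomes $\Delta=\{c_2\epsilon_2+c_3\epsilon_3:0\le c_3\le c_2\le m\}$, the dominant chamber of $K$ is $\{c_2\ge c_3\ge0\}$, and the reflection $s_1=s_{\epsilon_3}$ acts by $c_2\epsilon_2+c_3\epsilon_3\mapsto c_2\epsilon_2+(c_2-c_3)\epsilon_3$.

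Plugging into (\ref{eq:multB3G2}) I evaluate $m^{G,K}_{00m}(\mu)=m_\lambda(\mu)-m_\lambda(s_1\mu-\epsilon_3)$ on the dominant $K$-chamber. For $\mu=n\omega_1=n(\epsilon_2+\epsilon_3)$ the reflected partner $s_1\mu-\epsilon_3$ has $\epsilon_3$-coordinate $-1$ and so falls outside $\Delta$; thus the multiplicity equals $1$ exactly for $0\le n\le m$, recovering the $\N\omega_1$ statement already established before (\ref{eq:multB3G2}). For $\mu=a\omega_1+b\omega_2$ with $b\ge1$ one finds $\mu=(a+2b)\epsilon_2+(a+b)\epsilon_3$ and $s_1\mu-\epsilon_3=(a+2b)\epsilon_2+(b-1)\epsilon_3$, so that both points lie in $\Delta$ precisely when $a+2b\le m$ and lie outside otherwise; the two contributions therefore cancel and $m^{G,K}_{00m}(\mu)=0$. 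Consequently the $K$-spectrum of $V^G_{00m}$ is exactly $\{n0:0\le n\le m\}$, each $K$-type occurring once. \emph{This off-axis cancellation in the degenerate triangle is the only genuinely computational point}, and it is an elementary finite check.

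The remaining assertions are then corollaries. The case $m=1$ gives the $K$-types $00$ and $10$, that is the trivial and the $7$-dimensional representation of $\mathrm{G}_2$, in accordance with $1+7=8=\dim V^G_{001}$. In particular $V^G_{001}$ contains the trivial $K$-type, so it is spherical, and since $\varpi_3=\lambda_{\sph}$ is a primitive weight (Theorem \ref{classification theorem}) it is the fundamental spherical representation. Finally, $V^G_{klm}$ is spherical if and only if it contains the trivial $K$-type $00\in\N\omega_1$, which by the axis computation forces $k+l\le0\le k+l+m$, i.e. $k=l=0$; hence the irreducible spherical representations of $G$ are precisely the $V^G_{00m}$, with $K$-spectrum $\{n0;0\le n\le m\}$ as just determined.
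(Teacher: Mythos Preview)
Your proof is correct and follows the same route as the paper, namely specialising Theorem~\ref{(B_3,G_2) multiplicity formula} to $\lambda=00m$ and reading off the $K$-spectrum. The paper's own argument (the paragraph immediately preceding the corollary) only treats the $K$-types on $\N\omega_1$ explicitly and leaves the assertion that \emph{all} $K$-types of $V^G_{00m}$ lie on $\N\omega_1$ implicit; your explicit verification of the off-axis cancellation $m_\lambda(\mu)=m_\lambda(s_1\mu-\epsilon_3)$ for $b\ge1$ in the degenerate triangle fills this in cleanly and makes the argument self-contained.
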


\begin{corollary}\label{(B_3,G_2,F_1) multiplicity free}
For any irreducible representation of $G$ with highest weight $\lambda=klm$ all $K$-types
with highest weight $\mu\in F_1=\N\omega_1$ are multiplicity free, and the $K$-type with highest
weight $\mu=n0$ has multiplicity one if and only if $(k+l)\leq n\leq (k+l+m)$.
The domain of those $\lambda=klm$ for which the $K$-type $\mu=n0$ occurs has a well shape
$P^{+}_{G}(n0)=B(n0)+\N001$ with bottom
\[ B(n0)=\{klm\in P_G^+;k+l+m=n\} \]
given by a single linear relation.
\end{corollary}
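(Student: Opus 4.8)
The plan is to derive the branching statement directly from the formula (\ref{eq:multB3G2}) of Theorem \ref{(B_3,G_2) multiplicity formula} specialized to the face $F_1 = \N\omega_1$, and then to identify the bottom by an elementary computation in the spherical direction $\lambda_{\mathrm{sph}} = \varpi_3 = 001$. Throughout I write $\mu = n0 = n\omega_1 = n\epsilon_1$ for a weight on $F_1$, and I use the relation $\epsilon_1 = \epsilon_2 + \epsilon_3$, i.e.\ $\epsilon_3 = \epsilon_1 - \epsilon_2$, among the short roots.

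First I would show that the subtracted term in (\ref{eq:multB3G2}) drops out on $F_1$. Since $s_1$ is the reflection in $\beta_1 = \epsilon_3$, we have $s_1\mu - \epsilon_3 = n\epsilon_1 - (n+1)\epsilon_3 = -\epsilon_1 + (n+1)\epsilon_2$, a weight whose $\epsilon_1$-coordinate equals $-1$. On the other hand the six vertices $a$, $b$ and $q(w_i\lambda)$ for $i=1,2,3,4$ of the supporting hexagon in Figure \ref{figure: Spin7G2 projection on M chamber} have $\epsilon_1$-coordinates among $\{k,\,k+l,\,k+l+m\}$, so the minimal $\epsilon_1$-coordinate attained on the convex hexagon is $k \geq 0 > -1$. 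Therefore $m_\lambda(s_1\mu - \epsilon_3) = 0$, and (\ref{eq:multB3G2}) collapses to $m_\lambda^{G,K}(n0) = m_\lambda(n\epsilon_1)$. I expect this vanishing to be the main obstacle, since it is the only place where the geometry of $s_1$ and the relation $\epsilon_1 = \epsilon_2 + \epsilon_3$ genuinely enter; everything else is bookkeeping.

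Next I would evaluate the surviving term. The vertices $b = (k+l)\epsilon_1$ and $a = (k+l+m)\epsilon_1$ both lie on the wall $\R\omega_1 = \R\epsilon_1$, and the segment $[b,a]$ is exactly the edge of the hexagon on this wall. Because the type $A_2$ weight multiplicity is equal to $1$ on the outer shell, this yields $m_\lambda(n\epsilon_1) = 1$ precisely when $k+l \leq n \leq k+l+m$ and $0$ otherwise. In particular $m_\lambda^{G,K}(n0) \in \{0,1\}$ for all $n$, which proves multiplicity freeness on $F_1$ together with the criterion $m_\lambda^{G,K}(n0) = 1 \Leftrightarrow k+l \leq n \leq k+l+m$.

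It remains to pin down the bottom, the well shape $P_G^+(n0) = B(n0) + \N 001$ being the general structure of an induced spectrum. By definition $\lambda = klm \in B(n0)$ iff $\lambda \in P_G^+(n0)$ but $\lambda - 001 = kl(m-1) \notin P_G^+(n0)$. If $m = 0$ the weight $kl(m-1)$ is not dominant, and membership of $\lambda$ then forces $n = k+l = k+l+m$; if $m \geq 1$ the weight $kl(m-1)$ violates the criterion only through the upper bound, i.e.\ $n > k+l+(m-1)$, again forcing $n = k+l+m$. Hence $B(n0) = \{klm \in P_G^+ : k+l+m = n\}$, cut out by the single linear relation $k+l+m = n$.
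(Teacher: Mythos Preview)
Your proof is correct and follows essentially the same route as the paper: the paper also argues (in the paragraph preceding the corollary) that for $\mu\in\N\omega_1$ only the $v=e$ term in (\ref{eq:multB3G2}) contributes, so that $m_\lambda^{G,K}(n0)=m_\lambda(n\epsilon_1)$ is $1$ exactly on the edge $[b,a]$ of the outer hexagon, and then reads off the bottom from the inequality $n\leq k+l+m$. Your version simply makes the vanishing of the $s_1$-term and the bottom computation more explicit than the paper does.
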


\begin{proof}
The multiplicity freeness and the bounds for $n$ follow from Theorem \ref{(B_3,G_2) multiplicity formula} and in turn these inequalities $n\le k+l+m$ imply the formulae for $B(n0)$ and $P^{+}_{G}(n0)$.
\end{proof}

This ends our discussion that $(G,K,F_1=\N\omega_1)$ is a multiplicity free system.
In order to show that $(G,K,F_2=\N\omega_2)$ is also a multiplicity free triple
we shall carry out a similar analysis.

\begin{corollary}\label{(B_3,G_2,F_2) multiplicity free}
For an irreducible representation of $G$ with highest weight $\lambda=klm$ all $K$-types
with highest weight $\mu\in F_2=\N\omega_2$ are multiplicity free, and the K-type with highest
weight $\mu=0n$ has multiplicity one if and only if $\max(k,l)\leq n\leq \min(k+l,l+m)$.
The domain of those $\lambda=klm$ for which the $K$-type $\mu=0n$ occurs has a well shape
$P^{+}_{G}(0n)=B(0n)+\N001$ with bottom
\[ B(0n)=\{klm\in P_G^+;m\leq k\leq n,l+m=n\} \]
given by a single linear relation and inequalities.
\end{corollary}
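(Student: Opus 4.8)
The plan is to proceed exactly as for the face $F_1$ in Corollary~\ref{(B_3,G_2,F_1) multiplicity free}, starting from the branching formula (\ref{eq:multB3G2}) of Theorem~\ref{(B_3,G_2) multiplicity formula}, the difference being that now \emph{both} terms survive. First I would put $\mu=0n=n\omega_2$. Since $s_1$ is the reflection with mirror $\R\omega_2$ it fixes $n\omega_2$, so (\ref{eq:multB3G2}) collapses to
\[ m_\lambda^{G,K}(n\omega_2)=m_\lambda(n\omega_2)-m_\lambda(n\omega_2-\epsilon_3). \]
Using $\omega_2=\epsilon_1+\epsilon_2$ and the relation $\epsilon_3=\epsilon_1-\epsilon_2$ among the short roots, I would write the two arguments in $(\epsilon_1,\epsilon_2)$-coordinates as $(n,n)$ and $(n-1,n+1)$. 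Both lie on the line $\epsilon_1+\epsilon_2=2n$, which is parallel to the two edges of the hexagon of Figure~\ref{figure: Spin7G2 projection on M chamber} carried by $\epsilon_1+\epsilon_2=k+l$ and $\epsilon_1+\epsilon_2=k+2l+m$.

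Next I would make $m_\lambda$ explicit on that line. Let $d_1,d_4$ be the distances of a weight to the vertical edges $\epsilon_1=k+l+m$ and $\epsilon_1=k$, let $d_3,d_6$ be the distances to the horizontal edges $\epsilon_2=l+m$ and $\epsilon_2=0$, and let $d_2,d_5$ be the distances to the diagonal edges $\epsilon_1+\epsilon_2=k+2l+m$ and $\epsilon_1+\epsilon_2=k+l$. The shell description recalled before Theorem~\ref{(B_3,G_2) multiplicity formula} reads $m_\lambda=\min(\min_i d_i,c)+1$ inside the hexagon and $0$ outside, with $c=\min(l,m)$. At the two points one checks $d_1\ge d_3$ and $d_6\ge d_4$ because $k\ge0$, so the relevant minimum is governed by the rising pair $R:=\min(d_3,d_4)=\min(l+m-n,\,n-k)$ and by $K_0:=\min(d_2,d_5)$, which is constant along the line. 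Passing from $(n-1,n+1)$ to $(n,n)$ raises $d_3,d_4$ by one and leaves $d_2,d_5,c$ fixed, so $m_\lambda^{G,K}(n\omega_2)$ is the difference of $\min(R,K_0,c)+1$ and $\min(R-1,K_0,c)+1$ (each truncated to $0$ off the hexagon); this difference lies in $\{0,1\}$, giving multiplicity freeness, and equals $1$ exactly when $0\le R\le\min(K_0,c)$.

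The hard part will be converting $0\le R\le\min(K_0,c)$ into the stated bounds. The inequality $R\ge0$ gives $k\le n\le l+m$ immediately. For the rest I would use the identities $d_2=d_3+(k+l-n)$ and $d_5=d_4+(n-l)$ valid at $(n,n)$, and split on whether $R=d_3$ or $R=d_4$: if $R=d_3$ then $R\le d_2$ forces $n\le k+l$ while $R\le c\le m$ forces $n\ge l$; if $R=d_4$ then $R\le d_5$ forces $n\ge l$ while $R\le c\le l$ forces $n\le k+l$. Either way $\max(k,l)\le n\le\min(k+l,l+m)$. Conversely these four inequalities give $R\le n-k\le l$ and $R\le l+m-n\le m$, hence $R\le c$, and, since then $k+l-n\ge0$ and $n-l\ge0$, the same two identities yield $d_2\ge d_3\ge R$ and $d_5\ge d_4\ge R$, i.e. $R\le K_0$. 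The only additional care is the bookkeeping at the hexagon boundary, where one of the two terms vanishes; the estimates $d_1\ge d_3$ and $d_6\ge d_4$ guarantee the difference stays in $\{0,1\}$ there as well.

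Finally the well and the bottom follow as in the $F_1$ case. Because $\lambda_{\sph}=\varpi_3=001$, adding it sends $klm\mapsto kl(m+1)$ and plainly preserves all four inequalities, so $P_G^+(0n)=B(0n)+\N 001$. A weight $\lambda=klm$ lies in the bottom precisely when $kl(m-1)$ drops out of the spectrum, which by the upper bound $n\le l+m$ occurs exactly for $l+m=n$; imposing $l+m=n$ on $\max(k,l)\le n\le\min(k+l,l+m)$ reduces $\max(k,l)\le n$ to $k\le n$ and $n\le k+l$ to $m\le k$, leaving $m\le k\le n$, which is the asserted $B(0n)$.
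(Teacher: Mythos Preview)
Your argument is correct. It differs from the paper's proof in method rather than in substance. The paper proceeds geometrically: it describes the entire support of $\mu\mapsto m_\lambda^{G,K}(\mu)$ on $P_K^+$ as a heptagon (obtained by cutting the $A_2$ hexagon of $m_\lambda$ with its $s_1$-reflected copy, cf.\ Figure~\ref{figure: Spin7G2 support}), observes that the outer shell has multiplicity one, and then locates the two endpoints $e=\min(k+l,l+m)\omega_2$ and $f=\max(k,l)\omega_2$ of the intersection with $\N\omega_2$ by a case split according to whether $q(w_2\lambda)$ and $q(w_4\lambda)$ lie inside or outside $P_K^+$. You instead evaluate the difference $m_\lambda(n,n)-m_\lambda(n-1,n+1)$ directly via the shell formula $m_\lambda=\min(\min_i d_i,c)+1$, noting that only the pair $(d_3,d_4)$ changes along the step $\epsilon_3$ while $(d_2,d_5)$ and $c=\min(l,m)$ stay fixed; the elementary identities $d_2=d_3+(k+l-n)$ and $d_5=d_4+(n-l)$ then convert ``$0\le R\le\min(K_0,c)$'' into the four stated inequalities without any case split on the position of the hexagon vertices. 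Both approaches rest on the same ingredients (Theorem~\ref{(B_3,G_2) multiplicity formula} and the $A_2$ shell pattern); yours is more algebraic and self-contained at the price of some bookkeeping, while the paper's gives the global picture of the $K$-spectrum but needs the heptagon description and the vertex case analysis.
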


\begin{proof}
Under the assumption of the first part of this proposition $klm\in P^{+}_{G}(0n)$ implies that $kl(m+1)\in P^{+}_{G}(0n)$, and the bottom $B(0n)$ of those $klm\in P^{+}_{G}(0n)$ for which $kl(m-1)\notin P^{+}_{G}(0n)$
contains $klm$ if and only if $n=l+m$ and $k\geq m$.
It remains to show the first part of the proposition.

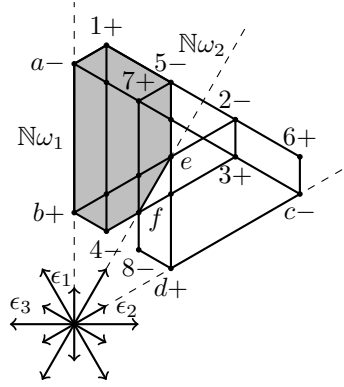
\begin{figure}[ht]
\begin{center}
\begin{tikzpicture}[scale=3/7]


\pgfmathsetmacro\ax{2}
\pgfmathsetmacro\ay{0}
\pgfmathsetmacro\bx{2 * cos(120)}
\pgfmathsetmacro\by{2 * sin(120)}
\pgfmathsetmacro\lax{2*\ax/3 + \bx/3}
\pgfmathsetmacro\lay{2*\ay/3 + \by/3}
\pgfmathsetmacro\lbx{\ax/3 + 2*\bx/3}
\pgfmathsetmacro\lby{\ay/3 + 2*\by/3}

\begin{scope}

\clip (5/3,13/4) circle (7);
\draw[dashed] (0,0) -- (0,10);
\draw[dashed] (0,0)-- (15*\lax,15*\lay+15*\lby);
\draw[dashed] (0,0) -- (15*\lax,15*\lay);

\draw[thick,->] (0,0) -- (\ax,\ay);
\draw[thick,->] (0,0) -- (\bx,\by);
\draw[thick,->] (0,0) -- (-\ax,-\ay);
\draw[thick,->] (0,0) -- (-\bx,-\by);
\draw[thick,->] (0,0) -- (\ax+\bx,\ay+\by);
\draw[thick,->] (0,0) -- (-\ax-\bx,-\ay-\by);

\draw(\lax,\lay-1/8) node[right]{\(\eps_{2}\)};
\draw[thick,->] (0,0) -- (\lax,\lay);
\draw[thick,->] (0,0) -- (\lbx,\lby);
\draw (2/8+\lbx,\lby+2/8) node[above,left]{\(\eps_{1}\)};
\draw[thick,->] (0,0) -- (-\lax,-\lay);
\draw[thick,->] (0,0) -- (-\lbx,-\lby);
\draw[thick,->] (0,0) -- (\lax-\lbx,\lay-\lby);
\draw[thick,->] (0,0) -- (\lbx-\lax,\lby-\lay) node[left] {\(\eps_3\)};

\draw[thick, fill=lightgray] (3*\lbx,3*\lby) -- (7*\lbx+0/2*\lax,7*\lby+0/2*\lay) -- (1*\lax+7*\lbx,1*\lay+7*\lby) -- (3*\lax+10/2*\lbx,3*\lay+10/2*\lby) -- (3*\lax+3*\lbx,3*\lay+3*\lby) -- (2*\lax+2*\lbx,2*\lay+2*\lby) -- (1*\lax+2*\lbx,1*\lay+2*\lby)-- cycle;

\draw[thick] (3*\lbx,3*\lby) -- (7*\lbx+0/2*\lax,7*\lby+0/2*\lay) -- (1*\lax+7*\lbx,1*\lay+7*\lby) -- (3*\lax+10/2*\lbx,3*\lay+10/2*\lby) -- (7*\lax+1*\lbx,7*\lay+1*\lby) -- (7*\lax+0*\lbx,7*\lay+0*\lby) -- (3*\lax+0*\lbx,3*\lay+0*\lby) -- (2*\lax+1*\lbx,2*\lay+1*\lby) --(2*\lax+2*\lbx,2*\lay+2*\lby) -- (5*\lax+2*\lbx,5*\lay+2*\lby) -- (7*\lax+0*\lbx,7*\lay+0*\lby);

\draw[thick] (3*\lax+0*\lbx,3*\lay+0*\lby) -- (3*\lax+3*\lbx,3*\lay+3*\lby);
\draw[thick] (0*\lax+7*\lbx,0*\lay+7*\lby) -- (5*\lax+2*\lbx,5*\lay+2*\lby);
\draw[thick] (0*\lax+3*\lbx,0*\lay+3*\lby) -- (5*\lax+3*\lbx,5*\lay+3*\lby);
\draw[thick] (2*\lax+2*\lbx,2*\lay+2*\lby) -- (2*\lax+5*\lbx,2*\lay+5*\lby) -- (3*\lax+5*\lbx,3*\lay+5*\lby);
\draw[thick] (5*\lax+2*\lbx,5*\lay+2*\lby) -- (5*\lax+3*\lbx,5*\lay+3*\lby);
\draw[thick] (1*\lax+2*\lbx,1*\lay+2*\lby) -- (1*\lax+7*\lbx,1*\lay+7*\lby);

\draw[fill] (0/2*\lax+6/2*\lbx,0/2*\lay+6/2*\lby) circle (2pt) node[left] {\(b+\)};
\draw[fill] (0/2*\lax+14/2*\lbx,0/2*\lay+14/2*\lby) circle (2pt) node[left] {\(a-\)};
\draw[fill] (1*\lax+7*\lbx,1*\lay+7*\lby) circle (2pt) node[above] {\(1+\)};
\draw[fill] (3*\lax+10/2*\lbx,3*\lay+10/2*\lby) circle (2pt) node[above] {\(5-\)};
\draw[fill] (3*\lax+3*\lbx,3*\lay+3*\lby) circle (2pt);
\draw (3*\lax+3*\lbx,3*\lay+3*\lby-1/5) node [right] {\(e\)};
\draw (2*\lax+2*\lbx,2*\lay+2*\lby-1/5) node[right] {\(f\)};
\draw[fill] (2*\lax+2*\lbx,2*\lay+2*\lby) circle (2pt);
\draw[fill] (1*\lax+2*\lbx,1*\lay+2*\lby) circle (2pt) node[below] {\(4-\)};

\draw[fill] (1*\lax+3*\lbx,1*\lay+3*\lby) circle (2pt);
\draw[fill] (1*\lax+6*\lbx,1*\lay+6*\lby) circle (2pt);
\draw[fill] (2*\lax+5*\lbx,2*\lay+5*\lby) circle (2pt) node[above] {\(7+\)};
\draw[fill] (3*\lax+4*\lbx,3*\lay+4*\lby) circle (2pt);
\draw[fill] (2*\lax+3*\lbx,2*\lay+3*\lby) circle (2pt);

\draw[fill] (3*\lax+0*\lbx,3*\lay+0*\lby) circle (2pt)  node[below] {\(d+\)};
\draw[fill] (14/2*\lax+0/2*\lbx,14/2*\lay+0/2*\lby) circle (2pt) node[below] {\(c-\)};

\draw[fill] (14/2*\lax+1*\lbx,14/2*\lay+1*\lby) circle (2pt) node[above] {\(6+\)};
\draw[fill] (5*\lax+3*\lbx,5*\lay+3*\lby) circle (2pt) node[above] {\(2-\)};
\draw[fill] (2*\lax+1*\lbx,2*\lay+1*\lby) circle (2pt) node[below] {\(8-\)};
\draw[fill] (3*\lax+2*\lbx,3*\lay+2*\lby) circle (2pt);
\draw[fill] (5*\lax+2*\lbx,5*\lay+2*\lby) circle (2pt) node[below] {\(3+\)};

\draw (0,5*\lby) node[left] {\(\mathbb{N}\omega_{1}\)};
\draw (5*\lax+5*\lbx,5*\lay+5*\lby) node[left] {\(\mathbb{N}\omega_{2}\)};

\end{scope}
\end{tikzpicture}
\end{center}
\caption{Support of the multiplicity function $\mu\mapsto m^{G,K}_{\lambda}(\mu)$.}\label{figure: Spin7G2 support}
\end{figure}
In order to determine the $K$-spectrum associated to the highest weight $\lambda=klm\in\N^3$ for $G$ observe that
\[ q(w_3\lambda)=k\epsilon_1+(l+m)\epsilon_2 \]
and so the $K$-spectrum on $\N\omega_2$ is empty for $k>(l+m)$, while
for $k=(l+m)$ the $K$-spectrum has a unique point $k\omega_2$ on $\N\omega_2$.
If $k<(l+m)$ the point $q(w_3\lambda)$ moves out of the dominant cone $P_K^+$ into $P_M^+-P_K^+$,
and the support of the function $P_K^+\ni\mu\mapsto m_{\lambda}^{G,K}(\mu)$ consists of
(the integral points of) a heptagon with an additional side on $\N\omega_2$ from $e$ to $f$
as in the picture above. On the outer shell heptagon the multiplicity is one,
and the multiplicities increase by one for each inner shell heptagon,
untill the heptagon becomes a triangle or quadrangle, and it stabilizes.
This follows from Theorem{\;\ref{(B_3,G_2) multiplicity formula}} in a straightforward way.

Depending on whether the vertex
\[ q(w_2\lambda)=(k+l)\epsilon_1+(l+m)\epsilon_2 \]
lies in $P_K^+$ (for $k\geq m$) or in $P_M^+-P_K^+$ (for $k<m$)
we get $e=(l+m)\omega_2$ or $e=(k+l)\omega_2$ respectively.
Hence we find
\[ e=\min(k+l,l+m)\omega_2\;,\;f=\max(k,l)\omega_2 \]
by a similar consideration for
\[ q(w_4\lambda)=k\epsilon_1+l\epsilon_2 \]
as before ($f=k$ for $k\geq l$ and $f=l$ for $k<l$).
This finishes the proof of Corollary{\;\ref{(B_3,G_2,F_2) multiplicity free}}.
\end{proof}

Our choice of positive roots for $G=\mathrm{B}_3$ and $K=\mathrm{G}_2$ was made in such a way that
the dominant cone $P_K^+$ for $K$ was contained in the dominant cone $P_G^+$ for $G$.
In turn this implies that the set
\[ A=q(R_G^+)-R_K^+=\{\epsilon_1,\epsilon_1,\epsilon_3\}\]
lies in an open half plane, which was required for the application
of the branching rule of Lemma{\;\ref{branching rule lemma}}.

However, we now switch to a different positive system in $R_G$, or rather
we keep $R_G^+$ fixed as before, but take the Lie algebra $\mathfrak{k}$ of $\mathrm{G}_2$ to be
perpendicular to the spherical direction $\varpi_3=(e_1+e_2+e_3)/2$ instead.
Under this assumption the positive roots $R_M^+$ form a parabolic subsystem in $R_G^+$,
and so the simple roots $\{\alpha_1=e_1-e_2,\alpha_2=e_2-e_3\}$ of $R_M^+$ are also simple roots in $R_G^+$.

\begin{figure}[ht]
\begin{center}
\begin{tikzpicture}[scale=.7]


\pgfmathsetmacro\ax{2}
\pgfmathsetmacro\ay{0}
\pgfmathsetmacro\bx{2 * cos(120)}
\pgfmathsetmacro\by{2 * sin(120)}
\pgfmathsetmacro\lax{2*\ax/3 + \bx/3}
\pgfmathsetmacro\lay{2*\ay/3 + \by/3}
\pgfmathsetmacro\lbx{\ax/3 + 2*\bx/3}
\pgfmathsetmacro\lby{\ay/3 + 2*\by/3}


\begin{scope}

\clip (-2,-2) rectangle (3+4*\ax,8);

\draw[thick, fill=lightgray] (0,0) -- (4*\lax,4*\lay) -- (0,4*\lby) -- cycle;

\draw[dashed] (0,0) -- (0,10);
\draw[dashed] (0,0)-- (8*\lax,8*\lay+8*\lby);
\draw[dashed] (0,0) -- (5*\lax,5*\lay);

\draw[thick,->] (0,0) -- (\ax,\ay) node[right] {\(\alpha_{2}\)};
\draw[thick,->] (0,0) -- (\bx,\by) node[above] {\(\alpha_{1}\)};
\draw[thick,->] (0,0) -- (-\ax,-\ay);
\draw[thick,->] (0,0) -- (-\bx,-\by);
\draw[thick,->] (0,0) -- (\ax+\bx,\ay+\by);
\draw[thick,->] (0,0) -- (-\ax-\bx,-\ay-\by);

\draw(\lax,\lay-1/8) node[right]{\(\eps_{2}\)};
\draw[thick,->] (0,0) -- (\lax,\lay);
\draw[thick,->] (0,0) -- (\lbx,\lby);
\draw (\lbx,\lby) node[left]{\(\eps_{1}\)};
\draw[thick,->] (0,0) -- (-\lax,-\lay);
\draw[thick,->] (0,0) -- (-\lbx,-\lby);
\draw[thick,->] (0,0) -- (\lax-\lbx,\lay-\lby);
\draw[thick,->] (0,0) -- (\lbx-\lax,\lby-\lay) node[left] {\(\eps_3\)};

\fill (0,4*\lby) circle (2pt) node[left] {\(n\eps_{1}\)};
\fill (4*\lax,4*\lay) circle (2pt);
\draw (4*\lax,4*\lay-1/4) node[below] {\(n\eps_{1}\)};

\draw[thick, fill=lightgray] (4*\lax+4*\lbx+3*\ax,4*\lay+4*\lby) -- (4*\lax+3*\ax,4*\lay) -- (0+3*\ax,4*\lby) -- cycle;

\draw[dashed] (0+3*\ax,0) -- (0+3*\ax,10);
\draw[dashed] (0+3*\ax,0)-- (15*\lax+3*\ax,15*\lay+15*\lby);
\draw[dashed] (0+3*\ax,0) -- (15*\lax+3*\ax,15*\lay);

\draw[thick,->] (0+3*\ax,0) -- (\ax+3*\ax,\ay) node[right] {\(\alpha_{2}\)};
\draw[thick,->] (0+3*\ax,0) -- (\bx+3*\ax,\by) node[above] {\(\alpha_{1}\)};
\draw[thick,->] (0+3*\ax,0) -- (-\ax+3*\ax,-\ay);
\draw[thick,->] (0+3*\ax,0) -- (-\bx+3*\ax,-\by);
\draw[thick,->] (0+3*\ax,0) -- (\ax+\bx+3*\ax,\ay+\by);
\draw[thick,->] (0+3*\ax,0) -- (-\ax-\bx+3*\ax,-\ay-\by);

\draw(\lax+3*\ax,\lay-1/8) node[right]{\(\eps_{2}\)};
\draw[thick,->] (0+3*\ax,0) -- (\lax+3*\ax,\lay);
\draw[thick,->] (0+3*\ax,0) -- (\lbx+3*\ax,\lby);
\draw (\lbx+3*\ax,\lby) node[left]{\(\eps_{1}\)};
\draw[thick,->] (0+3*\ax,0) -- (-\lax+3*\ax,-\lay);
\draw[thick,->] (0+3*\ax,0) -- (-\lbx+3*\ax,-\lby);
\draw[thick,->] (0+3*\ax,0) -- (\lax-\lbx+3*\ax,\lay-\lby);
\draw[thick,->] (0+3*\ax,0) -- (\lbx-\lax+3*\ax,\lby-\lay) node[left] {\(\eps_3\)};

\fill (03*\ax,4*\lby) circle (2pt) node[left] {\(n\eps_{1}\)};
\fill (4*\lax+3*\ax,4*\lay) circle (2pt);
\fill (4*\lax+4*\lbx+3*\ax,4*\lay+4*\lby) circle (2pt);
\draw (4*\lax+3*\ax,4*\lay-1/4) node[below] {\(n\eps_{1}\)};
\draw (4*\lax+4*\lbx+3*\ax-1/4,4*\lay+4*\lby) node[left] {\(n(\eps_{1}+\eps_{2})\)};

\end{scope}
\end{tikzpicture}
\end{center}
\caption{Projections of the bottoms $B_{n0}$ and $B_{0n}$.}\label{figure: Spin7G2 projections of bottoms}
\end{figure}
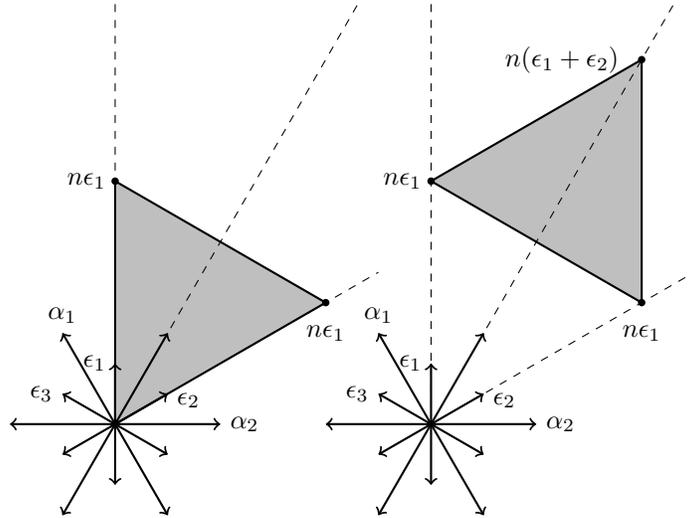

Let $p:P_G\rightarrow P_M=P_K$ be the orthogonal projection along the spherical direction.
By abuse of notation we denote (with $p(\varpi_3)=0$)
\[ \epsilon_1=p(\varpi_1)=(2,-1,-1)/3\;,\;\epsilon_2=p(\varpi_2)=(1,1,-2)/3 \]
for the fundamental weights for $P_M^+=p(P_G^+)$.
It is now easy to check that this projection
\[ p:B(n0)\rightarrow p(B(n0))\;,\;p:B(0n)\rightarrow p(B(0n)) \]
is a bijection from the bottom onto its image in $P_M^+$.
In the Figure \ref{figure: Spin7G2 projections of bottoms} we have drawn the projections
\[ p(B(n0))=\{k\epsilon_1+l\epsilon_2;k+l\leq n\}\;,\;p(B(0n))=\{k\epsilon_1+l\epsilon_2;k,l\leq n,k+l\geq n\} \]
on the left and the right side respectively.

Let us now prove the remaining case of Proposition \ref{prop: asymptotics}. Consider $\lambda=klm\in P^{+}_{G}$.
We take $x=k+l+m/2,y=l+m/2,z=m/2$ with $m$ relatively large. The projections of the elements $w\lambda$ that land in $P^{+}_{M}$ are given in Table \ref{Table: projections on M chamber}.

\begin{table}[ht]

\begin{center}
\begin{tabular}{|l|l|l|l|l|} \hline
$i$&$w_i\lambda$&$q(w_i\lambda)$&$q(w_i\lambda)$\\ \hline
$1$&$(x,y,z)$&$(x+z)\epsilon_1+(y-z)\epsilon_2$&$(k+l+m)\epsilon_1+l\epsilon_2$\\ \hline
$2$&$(x,y,-z)$&$(x-z)\epsilon_1+(y+z)\epsilon_2$&$(k+l)\epsilon_1+(l+m)\epsilon_2$\\ \hline
$3$&$(x,z,-y)$&$(x-y)\epsilon_1+(y+z)\epsilon_2$&$k\epsilon_1+(l+m)\epsilon_2$\\ \hline
$4$&$(x,-z,-y)$&$(x-y)\epsilon_1+(y-z)\epsilon_2$&$k\epsilon_1+l\epsilon_2$\\ \hline
$5$&$(y,x,z)$&$(y+z)\epsilon_1+(x-z)\epsilon_2$&$(l+m)\epsilon_1+(k+l)\epsilon_2$\\ \hline
$6$&$(y,x,-z)$&$(y-z)\epsilon_1+(x+z)\epsilon_2$&$l\epsilon_1+(k+l+m)\epsilon_2$\\ \hline
$7$&$(z,x,y)$&$(y+z)\epsilon_1+(x-y)\epsilon_2$&$(l+m)\epsilon_1+k\epsilon_2$\\ \hline
$8$&$(-z,x,y)$&$(y-z)\epsilon_1+(x-y)\epsilon_2$&$l\epsilon_1+k\epsilon_2$\\ \hline
\end{tabular}
\end{center}
\caption{Projections of $w\lambda$ in $P^{+}_{M}$.}\label{Table: projection of lambda}\label{Table: projections on M chamber}
\end{table}

As $m$ gets large the points $q(w_i\lambda)$ run to infinity except for $i=4$ and $i=8$. 
This means that we should take for $\nu=p(\lambda)=q(w_4\lambda)$ if we pick $i=4$. 
The multiplicity behavior $m^{G,K}_{\lambda}(\mu)$ in Picture \ref{figure: Spin7G2 support} for $m\rightarrow\infty$ 
goes as a function of $\mu\in P_K^+=\N\omega_1+\N\omega_2$ to the function that gives the multiplicity of $\mu$ induced 
representation $\mathrm{Ind}_{M}^{K}(V^{M}_{\nu})$ from $M=\SU(3)$ to $K=\G_2$, 
and therefore by Frobenius reciprocity equals $m^{K,M}_{\mu}(\nu)$. This shows that $\lim_{m\to\infty}m^{G,K}_{\lambda}(\mu)=m^{K,M}_{\mu}(\nu)$. 

The weights of the fundamental spherical representation with highest weight $\lambda_{\sph}=\varpi_{3}$ are $\frac{1}{2}(\pm\eps_{1}\pm\eps_{2}\pm\eps_{3})$. Expressed in terms of fundamental weights these become
$$001,(-1)01,1(-1)1, 01(-1)$$
and their negatives. It follows from Corollaries \ref{(B_3,G_2,F_1) multiplicity free} and \ref{(B_3,G_2,F_2) multiplicity free} that Theorem \ref{degree inequality theorem} holds true for this case.


\section{The pair $(G,K)=(\USp(2n),\USp(2n-2)\times\USp(2))$}\label{symplectic}

Let $G=\USp(2n)$ and $K=\USp(2n-2)\times\USp(2)$ with $n\ge3$. The weight lattices of $G$ and $K$ are equal, $P=\bbZ^{n}$, and we denote by $\eps_{i}$ the $i$-th basis vector. The set of dominant weights for $G$ is $P^{+}_{G}=\{(a_{1},\ldots,a_{n})\in P:a_{1}\ge\ldots\ge a_{n}\ge0\}$. The set of dominant weights for $K$ is  $P^{+}_{K}=\{(b_{1},\ldots,b_{n})\in\ P:b_{1}\ge\ldots\ge b_{n-1}\ge0,b_{n}\ge0\}$. The branching rule from $G$ to $K$ is due to Lepowsky \cite{Lepowsky}, \cite[Thm.~9.50]{Knapp}.

\begin{theorem}[Lepowsky]\label{theorem: lepowsky1}
Let $\lambda=(a_{1},\ldots,a_{n})\in P^{+}_{G}$ and $\mu=(b_{1},\ldots,b_{n})\in P^{+}_{K}$. Define $A_{1}=a_{1}-\max(a_{2},b_{1})$, $A_{k}=\min(a_{k},b_{k-1})-\max(a_{k+1},b_{k})$ for $2\le k\le n-1$ and $A_{n}=\min(a_{n},b_{n-1})$.
The multiplicity $m^{G,K}_{\lambda}(\mu)=0$ unless all $A_{i}\ge0$ and $b_{n}+\sum_{i=1}^{n}A_{i}\in2\bbZ$. In this case the multiplicity is given by
\begin{multline}\label{eq: lepowsky}
m^{G,K}_{\lambda}(\mu)=p_{\Sigma}(A_{1}\eps_{1}+A_{2}\eps_{2}+\cdots+(A_{n}-b_{n})\eps_{n})-\\
p_{\Sigma}(A_{1}\eps_{1}+A_{2}\eps_{2}+\cdots+(A_{n}+b_{n}+2)\eps_{n})
\end{multline}
where $p_{\Sigma}$ is the multiplicity function for the set $\Sigma=\{\eps_{i}\pm\eps_{n}:1\le i\le n-1\}$.
\end{theorem}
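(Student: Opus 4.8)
The plan is to deduce Lepowsky's formula from the Kostant--Heckman branching formula of Lemma~\ref{branching rule lemma}, which holds for any pair $K<G$ of compact connected groups and which simplifies here because $G=\USp(2n)$ and $K=\USp(2n-2)\times\USp(2)$ share a common maximal torus (both have rank $n$), so that the projection $q$ is the identity on $P=\bbZ^{n}$. Writing $R_G^+$ for the positive roots of $C_n$ and $R_K^+$ for those of $C_{n-1}\times C_1$, the complementary multiset is exactly $R_G^+\setminus R_K^+=\{\eps_i\pm\eps_n:1\le i\le n-1\}=\Sigma$, so the formula reads
\[ m^{G,K}_\lambda(\mu)=\sum_{w\in W_G}\det(w)\,p_\Sigma\bigl(w(\lambda+\rho_G)-\rho_G-\mu\bigr), \]
where $W_G$ is the group of signed permutations and $\rho_G=(n,n-1,\ldots,1)$.

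A useful first step is to make the support of $p_\Sigma$ explicit. Counting integer solutions of $c_i^{+}+c_i^{-}=\nu_i$ for $1\le i\le n-1$ with $\sum_i(c_i^{+}-c_i^{-})=\nu_n$ shows that $p_\Sigma(\nu)$ is supported precisely where $\nu_1,\ldots,\nu_{n-1}\ge0$, $|\nu_n|\le\sum_{i<n}\nu_i$ and $\nu_n\equiv\sum_{i<n}\nu_i\pmod 2$. Substituting the candidate argument $\nu=A_1\eps_1+\cdots+A_{n-1}\eps_{n-1}+(A_n-b_n)\eps_n$ then reproduces exactly the vanishing conditions of the statement, namely all $A_i\ge0$ together with $b_n+\sum_{i=1}^nA_i\in2\bbZ$; in particular $A_n=\min(a_n,b_{n-1})\ge0$ automatically.

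The two-term shape should be traced to the $\USp(2)$ factor. Its Weyl group is $\langle s_n\rangle$, the sign change of the last coordinate, with $\rho_{\USp(2)}=\eps_n$; since $s_n$ merely interchanges $\eps_i+\eps_n$ and $\eps_i-\eps_n$ it fixes $\Sigma$ and hence $p_\Sigma$. Grouping the Weyl sum by cosets of $\langle s_n\rangle$ and antisymmetrizing (exactly as in the $\Spin(7),\G_2$ reduction of \S\ref{Spin7}) should yield $m^{G,K}_\lambda(\mu)=M_\lambda(\mu)-M_\lambda(\mu')$ with $\mu'=s_n(\mu+\eps_n)-\eps_n$, i.e.\ $\mu'$ replaces $b_n$ by $-b_n-2$ and keeps $b_1,\ldots,b_{n-1}$. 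Since none of $A_1,\ldots,A_n$ involves $b_n$, once one knows $M_\lambda(\mu)=p_\Sigma(A_1\eps_1+\cdots+(A_n-b_n)\eps_n)$ the substitution $b_n\mapsto-b_n-2$ turns the second term into $p_\Sigma(A_1\eps_1+\cdots+(A_n+b_n+2)\eps_n)$, which is precisely \eqref{eq: lepowsky}.

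The remaining, and I expect genuinely hard, step is to show that $M_\lambda(\mu)$ — the part of the alternating sum over $W_G$ modulo $\langle s_n\rangle$ — collapses to the single value $p_\Sigma(A_1\eps_1+\cdots+(A_n-b_n)\eps_n)$. This is \emph{not} a formal invariance collapse, because $\Sigma$ fails to be stable under the remaining factor $W_{C_{n-1}}$: a sign change of $\eps_i$ with $i\le n-1$ carries $\eps_i+\eps_n$ out of $\Sigma$. One must instead determine, among the signed permutations of the first $n-1$ coordinates, exactly which send $w(\lambda+\rho_G)-\rho_G-\mu$ into the support of $p_\Sigma$, and then telescope their signed contributions. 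The interlacing quantities $A_1=a_1-\max(a_2,b_1)$, $A_k=\min(a_k,b_{k-1})-\max(a_{k+1},b_k)$ and $A_n=\min(a_n,b_{n-1})$ should emerge as precisely the bounds surviving this cancellation, re-deriving the familiar $C_{n-1}$ interlacing pattern inside the $C_n$ Weyl sum; carrying out this bookkeeping is where the real work lies. Since the identity is classical, a legitimate alternative — and the route we ultimately follow — is simply to invoke the statements of Lepowsky \cite{Lepowsky} and Knapp \cite[Thm.~9.50]{Knapp}.
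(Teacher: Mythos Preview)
The paper does not prove this theorem at all: it is stated with the attribution ``due to Lepowsky \cite{Lepowsky}, \cite[Thm.~9.50]{Knapp}'' and used as a black box for the subsequent multiplicity-free analysis. Your final sentence --- invoking Lepowsky and Knapp directly --- is therefore exactly what the paper does, and is a complete ``proof'' in the sense required here.

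The preceding sketch goes beyond what the paper offers, and the overall strategy (apply Lemma~\ref{branching rule lemma} with $q=\mathrm{id}$ and $A=\Sigma$, then peel off the $\USp(2)$ Weyl group $\langle s_n\rangle\subset W_K$ to produce the two terms) is sound in outline. One small clarification: the two-term structure comes from the $W_K$-antisymmetry in $\mu$ stated in Lemma~\ref{branching rule lemma}, not from grouping the $W_G$-sum itself by cosets of $\langle s_n\rangle$; since $s_n$ permutes $\Sigma$, the intermediate function $M_\lambda(\mu)=\sum_{w\in W_G}\det(w)\,p_\Sigma(w(\lambda+\rho_G)-\rho_G-\mu)$ already satisfies $M_\lambda(\mu)=-M_\lambda(s_n(\mu+\rho_K)-\rho_K)$, and one then has to identify $M_\lambda$ with the single partition-function value $p_\Sigma(\sum A_i\eps_i-b_n\eps_n)$ antisymmetrized over the residual $W_{C_{n-1}}$ factor. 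You correctly flag that this last collapse --- the interlacing combinatorics yielding the $A_k$ --- is the substantive part, and that it is not a formal invariance argument since $W_{C_{n-1}}$ does not preserve $\Sigma$. This is precisely the content of Lepowsky's original computation (and the two-step $C_n\downarrow C_{n-1}\times C_1$ analysis in Knapp), so deferring to those references is appropriate and matches the paper.
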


\begin{theorem}
Let $\mu=x\omega_{i}+y\omega_{j}\in P^{+}_{K}$ with $i<j$ and write $\mu=(b_{1},\ldots,b_{n})$. Let $\lambda=(a_{1},\ldots,a_{n})\in P^{+}_{G}$. Let $A_{1},\ldots, A_{n}$ be defined as in Theorem \ref{theorem: lepowsky1}. Then $m^{G,K}_{\lambda}(\mu)\le1$ with equality precisely when (1) $A_{k}\ge0$ for $k=1,\ldots,n-1$, (2) $b_{n}+\sum_{k=1}^{n}A_{k}\in2\bbZ$ and (3) $\max(A_{k},b_{n})\le\frac{1}{2}(b_{n}+\sum_{k=1}^{n}A_{k})$.
\end{theorem}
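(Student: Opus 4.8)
The plan is to convert Lepowsky's formula into an iterated $\SU(2)$ Clebsch--Gordan problem and then exploit the fact that $\mu$ lies on a two-dimensional face.

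First I would make the partition function $p_\Sigma$ for $\Sigma=\{\eps_i\pm\eps_n:1\le i\le n-1\}$ completely explicit. Since each $\eps_i$ with $i<n$ is fed only by $\eps_i+\eps_n$ and $\eps_i-\eps_n$, the variables decouple, and for $c=\sum_{i=1}^n c_i\eps_i$ with $c_i\ge0$ $(i<n)$ one gets
\[ p_\Sigma(c)=[t^{S}]\prod_{i=1}^{n-1}(1+t+\cdots+t^{c_i}),\qquad S=\tfrac12\big(c_n+\textstyle\sum_{i<n}c_i\big), \]
read as $0$ unless $S\in\bbN$ lies in $[0,\sum_{i<n}c_i]$. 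Substituting the two arguments of \eqref{eq: lepowsky} turns the multiplicity into a difference of coefficients
\[ m^{G,K}_\lambda(\mu)=N(S_-)-N(S_+),\qquad N(S)=[t^S]\prod_{k=1}^{n-1}(1+\cdots+t^{A_k}), \]
with $\sigma=\sum_{k=1}^nA_k$, $S_-=\tfrac12(\sigma-b_n)$, $S_+=\tfrac12(\sigma+b_n)+1$, and $N$ supported on $[0,T]$ where $T=\sum_{k=1}^{n-1}A_k$. Here condition (1) is exactly what makes the factors genuine polynomials (equivalently $N\not\equiv0$), and condition (2) is exactly $S_\pm\in\bbZ$.

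Next I would reinterpret $N(S)$ as the dimension of the weight space of weight $2S-T$ in the tensor product $\bigotimes_{k=1}^{n-1}V_{A_k}$ of irreducible $\SU(2)\cong\USp(2)$-modules $V_{A_k}$ (highest weight $A_k$, hence symmetric unimodal weight multiplicities, so $N(S)=N(T-S)$). A short telescoping computation based on this evenness absorbs $A_n$ and converts the difference into
\[ m^{G,K}_\lambda(\mu)=\Big[\,\bigotimes_{k=1}^n V_{A_k}\ :\ V_{b_n}\,\Big], \]
the multiplicity of the $\SU(2)$-irrep of highest weight $b_n$ in the Clebsch--Gordan product of all $n$ factors; this makes manifest that $m\ge0$, and that a nonzero value forces the weight-parity $b_n\equiv\sigma\pmod 2$, i.e.\ condition (2). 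The crux is then the face structure: writing $\mu=(b_1,\dots,b_n)$, the $\USp(2n-2)$-part $(b_1,\dots,b_{n-1})$ is a staircase with the three values $x+y,\,y,\,0$, so $b_{k-1}=b_k$ away from at most two break-points. For such $k$ the identity $A_k=\min(a_k,b_{k-1})-\max(a_{k+1},b_k)$ together with (1) forces $A_k=0$ (if $b_{k-1}=b_k=c$ and $A_k\ge0$ then $a_k\ge c\ge a_{k+1}$, whence $A_k=c-c=0$). Hence at most three of $A_1,\dots,A_n$ survive, and bookkeeping the break-points shows that whenever $b_n\ne0$ (the case $j=n$) at most two survive. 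Thus $m$ is either the multiplicity of the trivial module $V_0$ in a product of at most three $V_{A_k}$ (when $b_n=0$), or the multiplicity of $V_{b_n}$ in a product of at most two $V_{A_k}$ (when $b_n\ne0$). Both are at most $1$, since for $\SU(2)$ the trivial rep occurs in $V_a\otimes V_b\otimes V_c$ iff $a,b,c$ obey the triangle inequalities, and $V_d$ occurs in $V_a\otimes V_b$ iff $|a-b|\le d\le a+b$, each with multiplicity $\le1$. In both descriptions the admissibility reads $\max(A_k,b_n)\le\tfrac12(b_n+\sigma)$, which is precisely (3).

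The main obstacle is the structural lemma: verifying that exactly the right $A_k$ vanish demands a careful case analysis over the positions of the break-points $i+1,\ j+1$ relative to $n$ (the genuinely distinct cases being $j\le n-2$, $j=n-1$, and $j=n$ with $i\le n-2$ or $i=n-1$), and in each case one must confirm that the surviving $A_k$ together with $b_n$ are precisely the two or three spins whose Clebsch--Gordan admissibility reproduces (2) and (3). The telescoping identity of the second step, although elementary, must also be set up so that the shift by $A_n$ and the summand $+2$ (the value of $2\rho$ for $\USp(2)$) are accounted for correctly.
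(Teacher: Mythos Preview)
Your proposal is correct. Both your argument and the paper's share the crucial structural observation: the two-face hypothesis on $\mu$ forces all but at most three of the $A_k$ to vanish (at most two when $b_n\ne0$), and both organize this into the same four cases depending on the position of $j$ relative to $n-1$ and $n$. After this common reduction the two arguments diverge. The paper relabels the surviving quantities to reduce to $n=4$, further reduces cases (ii) and (iv) to case (i) by manipulations of partition functions, and then evaluates $p_\Sigma(A_1\eps_1+A_2\eps_2+A_3\eps_3)-p_\Sigma(A_1\eps_1+A_2\eps_2+A_3\eps_3+2\eps_4)$ by an explicit count of lattice points in a hexagon or parallelogram. Your route instead interprets $N(S)$ as a weight multiplicity in $\bigotimes_{k<n}V_{A_k}$, uses the symmetry $N(S)=N(T-S)$ together with a telescoping sum to absorb the factor $V_{A_n}$, and identifies the Lepowsky difference with the isotypic multiplicity $[\bigotimes_{k=1}^nV_{A_k}:V_{b_n}]$ for $\USp(2)$; the bound $\le1$ and the conditions (2), (3) then drop out of the Clebsch--Gordan rule uniformly for all four cases. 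The paper's computation is self-contained and elementary; yours is shorter and makes transparent why condition (3) is a triangle inequality, at the cost of the telescoping step (which, as you note, must handle the shift by $A_n$ and the $+2$ carefully---it does, via $N(S_--A_n-1)=N(T-S_-+A_n+1)=N(S_+)$).
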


\begin{proof}
Suppose that $m^{G,K}_{\lambda}(\mu)\ge1$. Then (1) and (2) follow from Theorem \ref{theorem: lepowsky1}. In fact, $A_{k}=0$ unless $k\in\{1,i+1,j+1\}\cap[1,n]$, because of the hypothesis on $\mu$. We evaluate (\ref{eq: lepowsky}) below, showing that $m^{G,K}_{\lambda}(\mu)\le1$ with equality precisely when (3) holds.

We distinguish 4 cases: (i) $j<n-1$, (ii) $j=n-1$, (iii) $j=n, i=n-1$, (iv) $j=n, i<n-1$. In all cases we reduce to $n=4$ and we find the following expressions for $m_{\lambda}^{G,K}(\mu)$: 
\begin{itemize}
\item[(i)] $p_{\Sigma}(A_{1}\eps_{1}+A_{2}\eps_2+A_{3}\eps_{3})-p_{\Sigma}(A_{1}\eps_{1}+A_{2}\eps_2+A_{3}\eps_{3}+2\eps_{4})$,
\item[(ii)] $p_{\Sigma}(A_{1}\eps_{1}+A_{2}\eps_2+A_{4}\eps_{4})-p_{\Sigma}(A_{1}\eps_{1}+A_{2}\eps_2+(A_{4}+2)\eps_{4})$,
\item[(iii)] $p_{\Sigma}(A_{1}\eps_{1}+(A_{4}-b_{4})\eps_{4})-p_{\Sigma}(A_{1}\eps_{1}+(A_{4}+b_{4}+2)\eps_{4})$,
\item[(iv)] $p_{\Sigma}(A_{1}\eps_{1}+A_{2}\eps_2-b_{4}\eps_{4})-p_{\Sigma}(A_{1}\eps_{1}+A_{2}\eps_2+(b_{4}+2)\eps_{4})$.
\end{itemize}
The cases (ii) and (iv) reduce to (i) using elementary manipulations of partition functions, see \cite[p.~588]{Knapp}. Case (iii) can also be reduced to (i) but this is not necessary as we can handle this case directly. We have $p_{\Sigma}(A_{1}\eps_{1}+(A_{4}-b_{4})\eps_{4})\le1$ with equality if and only if $A_{1}+A_{4}-b_{4}\in2\bbN$ and $A_{1}-A_{4}+b_{4}\in2\bbN$. Similarly $p_{\Xi}(A_{1}\eps_{1}+(A_{4}+b_{4}+2)\eps_{4})\le1$ with equality if and only if $A_{1}+A_{4}+b_{4}+2\in2\bbN$ and $A_{1}-A_{4}-b_{4}-2\in2\bbN$. This implies the assertion in case (iii).

In case (i) we have
\begin{eqnarray*}
\sum_{k=1}^{3}A_{k}\eps_{k}=\sum_{k=1}^{3}B_{k}(\eps_{k}+\eps_{4})+\sum_{k=1}^{3}(A_{k}-B_{k})(\eps_{k}-\eps_{4})
\end{eqnarray*}
if and only if $\sum_{i=1}^{3}B_{k}=A$. It follows that
$$p_{\Sigma}(A_{1}\eps_{1}+A_{2}\eps_{2}+A_{3}\eps_{3})=
\#\{(B_{1},B_{2},B_{3})\in\bbN^{3}:\sum_{k=1}^{3}B_{k}=A\mbox{ and }B_{k}\le A_{k}\}$$
and similarly
\begin{multline}p_{\Sigma}(A_{1}\eps_{1}+A_{2}\eps_{2}+A_{3}\eps_{3}+2\eps_{4})=\\
\#\{(B_{1},B_{2},B_{3})\in\bbN^{3}:\sum_{k=1}^{3}B_{k}=A+1\mbox{ and }B_{k}\le A_{k}\}.
\end{multline}
Assume that $A_{1}\ge A_{2}\ge A_{3}$. We distinguish two possibilities: (1) $A_{1}\le A$ and (2) $A_{1}>A$. In case (1) we have
\[p_{\Sigma}(\sum_{i=1}^{3}A_{i}\eps_{i})=\#\{\mbox{lattice points in hexagon indicated in Figure \ref{figure: counting points}}\}\]
which is given by
\[p_{\Sigma}(\sum_{i=1}^{3}A_{i}\eps_{i})=(A+1)(A+2)/2-\sum_{i=1}^{3}(A-A_{i})(A-A_{i}+1)/2.\]
Similarly
\[p_{\Sigma}(\sum_{i=1}^{3}A_{i}\eps_{i}+2\eps_{4})=(A+2)(A+3)/2-\sum_{i=1}^{3}(A+1-A_{i})(A-A_{i}+2)/2\]
and the difference is one, as was to be shown.

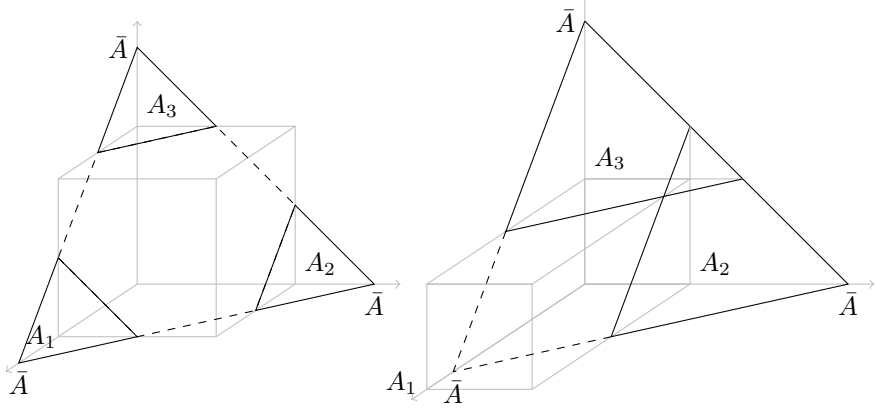
\begin{figure}[ht]
\begin{center}
\begin{tikzpicture}[scale=.35]
\begin{scope}

\clip (-5,-5) rectangle (28,11);


\draw[thin,lightgray,->] (0,0) -- (0,10);
\draw[thin,lightgray] (0,6) -- (6,6) -- (3,4) -- (3,-2);
\draw[thin,lightgray] (6,6) -- (6,0);
\draw[thin,lightgray] (6,0) -- (3,-2);
\draw[thin,lightgray] (3,-2) -- (-3,-2);
\draw[thin,lightgray] (-3,-2) -- (-3,4) --(3,4);
\draw[thin,lightgray] (-3,4) -- (0,6);
\draw[thin,lightgray,->] (0,0) -- (-5,-10/3);
\draw[thin,lightgray,->] (0,0) -- (10,0);


\draw[thin] (0,9) node[left]{$\bar{A}$} -- (3,6) -- (-3/2,5) -- cycle;
\draw[thin] (9,0) node[below]{$\bar{A}$} -- (9/2,-1) -- (6,3) -- cycle;
\draw[thin] (-9/2,-3) node[below]{$\bar{A}$} -- (-3,1) -- (0,-2) -- cycle;
\draw[dashed] (-3,1) -- (0,-2) -- (9/2,-1) -- (6,3) -- (3,6) -- (-3/2,5) -- cycle;

\draw[thin] (6,0) node[above right]{$A_2$};
\draw[thin] (0,6) node[above right]{$A_3$};
\draw[thin] (-2.7,-2) node[left]{$A_1$};

\draw[thin,lightgray,->] (20-3,0) -- (31-3,0);
\draw[thin,lightgray,->] (20-3,0) -- (20-3,11);
\draw[thin,lightgray,->] (20-3,0) -- (67/5-3,-22/5);
\draw[thin,lightgray] (14-3,-4) -- (18-3,-4) -- (18-3,0) -- (14-3,0) -- cycle; 
\draw[thin,lightgray] (24-3,0) -- (20-3,0) -- (20-3,4) -- (24-3,4) -- cycle; 
\draw[thin,lightgray] (14-3,-4) -- (20-3,0) -- (24-3,0) -- (18-3,-4);
\draw[thin,lightgray] (14-3,0) -- (20-3,4) -- (24-3,4) -- (18-3,0) -- cycle;

\draw[thin] (14-3,-4+1/4) node[left]{$A_1$};
\draw[thin] (24-3,0) node[above right]{$A_2$};
\draw[thin] (20-3,4) node[above right]{$A_3$};

\draw[thin] (20-3,10) node[left]{$\bar{A}$} -- (30-3,0) node[below]{$\bar{A}$}  -- (21-3,-2);
\draw[dashed] (21-3,-2) -- (15-3,-10/3) node[below]{$\bar{A}$};
\draw[thin] (21-3,-2) -- (24-3,6); 
\draw[thin, lightgray] (24-3,6) -- (24-3,4) -- (26-3,4);
\draw[thin] (23-3,10/3) -- (26-3,4);
\draw[thin] (23-3,10/3) -- (17-3,2) -- (20-3,10);
\draw[dashed] (17-3,2) -- (15-3,-10/3);


\end{scope}

\end{tikzpicture}
\end{center}
\caption{Counting integral points.}\label{figure: counting points}
\end{figure}
In case (2) where $A_{1}>A$ we have
\[p_{\Sigma}(\sum_{i=1}^{3}A_{i}\eps_{i})=\#\{\mbox{lattice points in parallelogram in Figure \ref{figure: counting points}}\}\]
which is given by $A_{2}A_{3}$. Similarly $p_{\Sigma}(\sum_{i=1}^{3}A_{i}\eps_{i}+2\eps_{4})=A_{2}A_{3}$ and hence the difference is zero. 
\end{proof}

The bottom $B(\mu)$ of the $\mu$-well $P^{+}_{G}(\mu)$ is parametrized by $P^{+}_{M}(\mu)$, where $M\cong\USp(2)\times\USp(2n-4)\times\USp(2)$. In \cite{Baldoni Silva} the branching rules for $K$ to $M$ are described. The dominant integral weights for $M$ are parametrized by $P^{+}_{M}=\{(c_{1},c_{2},\ldots,c_{n-1},c_{1}): 2c_{1}\in\bbN, c_{2}\ge\cdots\ge c_{n-1}\}\subset P$. The map $p:P^{+}_{G}\to P^{+}_{M}$ from Proposition \ref{projection from induced to restricted spectrum} is given as follows. Write $\lambda=(a_{1},\ldots,a_{n})\in P^{+}_{G}$ as
\begin{eqnarray}
\lambda=(\lambda-\frac{a_{1}+a_{2}}{2}\lambda_{\sph})+\frac{a_{1}+a_{2}}{2}\lambda_{\sph},
\end{eqnarray}
with $\lambda_{\sph}=\varpi_{2}=\eps_{1}+\eps_{2}$. Then $p(\lambda)=(\frac{1}{2}(a_{1}+a_{2}),a_{3},\ldots,a_{n-1},\frac{1}{2}(a_{1}+a_{2}))\in P^{+}_{M}$. The map $q:P\to P:\lambda\mapsto\lambda-(a_{1}+a_{2})\lambda_{\sph}/2$ projects onto the orthocomplement of $\lambda_{\sph}$ and the maps $p$ and $q$ differ by a Weyl group element in $W_{G}$. To determine the bottom $B(\mu)$ we have to find for each $\lambda\in P^{+}_{G}(\mu)$ the minimal $d\in\frac{1}{2}\bbN$ for which $q(\lambda)+d\lambda_{\sph}\in P^{+}_{G}(\mu)$. We distinguish two cases for the $K$-type $\mu=x\omega_{i}+y\omega_{j}=(b_{1},\ldots,b_{n})$, $i<j$: (1) $i=1$, (2) $i>1$. Assume (1). Then the relevant inequalities are $A_{1}\ge0,A_{2}\ge0$ and $A_{1}+A_{2}\ge B$, with $B$ equal to $A_{j+1}$ or $y$, depending on $j<n$ or $j=n$ respectively. Plugging in $\lambda=q(\lambda)+d\lambda_{\sph}$ and minimizing for $d$ yields
$$d=\max(b_{1}-c_{1},b_{2}+c_{1},\frac{1}{2}(b_{1}+B+\max(a_{3},b_{2}))),$$
where $c_{1}=(a_{1}-a_{2})/2$. The branching rules for $K$ to $M$ specialized to the specific choice of $\mu$ implies that $d=\frac{1}{2}(b_{1}+B+\max(a_{3},b_{2}))$ (see \cite{van Pruijssen Roman}). Assume (2). The relevant inequality is $A_{1}\ge0$. Since $i>1$ we have $b_{1}=b_{2}$ so $A_{1}=a_{1}-a_{2}$, which is invariant for adding multiples of $\lambda_{\sph}$. We plug in $q(\lambda)+d\lambda_{\sph}$ and write $c_{1}=(a_{1}-a_{2})/2$ . Minimizing $d$ so that $A_{k}\ge0$ yields $d=c_{1}+b_{1}$.

The weights of the fundamental spherical representation of highest weight $\lambda_{\sph}=\eps_{1}+\eps_{2}$ are $\{\pm\eps_{i}\pm\eps_{j}:i<j\}\cup\{0\}$. One easily checks that Theorem \ref{degree inequality theorem} holds true for this case.


\section{The pair $(G,K)=(\F_{4},\Spin(9))$}\label{F4}

In this section we take $G$ of type $F_{4}$ and $K=\Spin(9)$ the subgroup of type $B_{4}$. Let $H\subset K\subset G$ be the standard maximal torus and let $\lag,\lak,\lah$ denote the corresponding Lie algebras. We fix the set of positive roots of the root systems $\Delta(\lag,\lah)$ and $\Delta(\lak,\lah)$,
$$R^{+}_{K}=\{\eps_{i}\pm\eps_{j}|1\le i< j\le4\}\cup\{\eps_{i}|1\le i\le4\},$$
$$R^{+}_{G}=R^{+}_{K}\cup\left\{\frac{1}{2}(\eps_{1}\pm\eps_{2}\pm\eps_{2}\pm\eps_{2})\right\}.$$
The corresponding systems of simple roots are
$$\Pi_{G}=\{\alpha_{1}=\frac{1}{2}(\eps_{1}-\eps_{2}-\eps_{3}-\eps_{4}), \alpha_{2}=\eps_{4},\alpha_{3}=\eps_{3}-\eps_{4},\alpha_{4}=\eps_{2}-\eps_{3}\},$$
$$\Pi_{K}=\{\beta_{1}=\eps_{1}-\eps_{2},\beta_{2}=\eps_{2}-\eps_{3},\beta_{3}=\eps_{3}-\eps_{4},\beta_{4}=\eps_{4}\},$$
see also the Dynkin diagram in Figure \ref{figure: dynkin}.

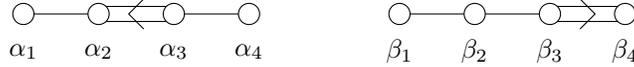
\begin{figure}[ht]
\begin{center}
\begin{tikzpicture}[scale=1]
\begin{scope}

\clip (0,-.30) rectangle (10.1,1);



\draw[black] (1,0.5) -- (2,0.5);
\draw[black] (3,0.5) -- (4,0.5);
\draw[black] (6,0.5) -- (8,0.5);

\draw[black] (2,0.4) -- (3,0.4);
\draw[black] (2,0.6) -- (3,0.6);

\draw[black] (8,0.4) -- (9,0.4);
\draw[black] (8,0.6) -- (9,0.6);

\draw[fill=white] (1,0.5) circle (4pt);
\draw[fill=white] (2,0.5) circle (4pt);
\draw[fill=white] (3,0.5) circle (4pt);
\draw[fill=white] (4,0.5) circle (4pt);

\draw[fill=white] (6,0.5) circle (4pt);
\draw[fill=white] (7,0.5) circle (4pt);
\draw[fill=white] (8,0.5) circle (4pt);
\draw[fill=white] (9,0.5) circle (4pt);

\draw (1,0) node {\(\alpha_{1}\)};
\draw (2,0) node {\(\alpha_{2}\)};
\draw (3,0) node {\(\alpha_{3}\)};
\draw (4,0) node {\(\alpha_{4}\)};

\draw (6,0) node {\(\beta_{1}\)};
\draw (7,0) node {\(\beta_{2}\)};
\draw (8,0) node {\(\beta_{3}\)};
\draw (9,0) node {\(\beta_{4}\)};

\draw[black] (2.6 ,0.7) -- (2.4 ,0.5) --(2.6 ,0.3);

\draw[black] (8.4 ,0.7) -- (8.6 ,0.5) --(8.4 ,0.3);

\end{scope}

\end{tikzpicture}
\end{center}
\caption{The Dynkin diagrams of $\F_{4}$ and $B_{4}$.}\label{figure: dynkin}
\end{figure}
\noindent The fundamental weights corresponding to $\Pi_{G}$ are given by
$$\varpi_{1}=\eps_{1},\varpi_{2}=\frac{1}{2}(3\eps_{1} + \eps_{2} + \eps_{3} + \eps_{4}), \varpi_{3}= 2 \eps_{1} + \eps_{2} + \eps_{3}, \varpi_{4}=\eps_{1} + \eps_{2}$$
and those corresponding to $\Pi_{K}$ by
$$\omega_{1}=\eps_{1},\omega_{2}=\eps_{1}+\eps_{2},\omega_{3}=\eps_{1}+\eps_{2}+\eps_{3},\omega_{4}=\frac{1}{2}(\eps_{1}+\eps_{2}+\eps_{3}+\eps_{4}).$$
The lattices of integral weights of $G$ and $K$ are the same and equal to $P=\bbZ^{4}\cup((1/2,1/2,1/2,1/2)+\bbZ^{4})$ and the sets of dominant integral weights are denoted by $P^{+}_{G}$ and $P^{+}_{K}$.

\begin{theorem}\label{thm: multiplicity free condition}
There are three faces $F$ of $K$ such that $m^{G,K}_{\lambda}(\mu)\le1$ for all $\lambda\in P^{+}_{G}$ and all $\mu\in F$: the two dimensional face spanned by $\{\omega_{1},\omega_{2}\}$ and two one dimensional faces, spanned by $\omega_{3}$ and $\omega_{4}$ respectively.
\end{theorem}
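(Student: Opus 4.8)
The plan is to reduce the statement, via Proposition \ref{prop: reducing to M}, to a multiplicity-freeness question for the branching from $K=\Spin(9)$ to the centralizer $M=Z_K(T)$, and then to make that branching explicit. First I would identify $M$: since $(G,K)=(\F_4,\Spin(9))$ is the rank one symmetric pair with $G/K=\Pro^2(\OO)$, the restricted root system is of type $BC_1$ with root multiplicities $8$ and $7$, so $\dim\lam=\dim\lak-15=21$ and $M=\Spin(7)$, embedded in $\Spin(9)$ through the spin embedding (the $9$-dimensional vector representation of $K$ restricts to $M$ as $\Delta_7\oplus\C$, with $\Delta_7$ the $8$-dimensional spin representation, so $M$ stabilizes a vector in $\C^9$ and hence sits inside the standard $\Spin(8)<\Spin(9)$). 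By Proposition \ref{prop: reducing to M}, the triple $(G,K,F)$ is multiplicity free if and only if $m^{K,M}_{\mu}(\nu)\le1$ for all $\mu\in F$ and all $\nu\in P_M^+$, so it suffices to analyse the branching $\Spin(9)\downarrow\Spin(7)$ on each face.

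Next I would compute this branching through the chain $\Spin(9)\supset\Spin(8)\supset\Spin(7)$. The first restriction $B_4\downarrow D_4$ is the classical interval (Murnaghan) rule, and the second is again an orthogonal branching $D_4\downarrow B_3$; the only subtlety is that the relevant $\Spin(7)=M$ is the spin-embedded copy, i.e. $8_v$ of $\Spin(8)$ restricts to the spinor $\Delta_7$, which by triality is the image of the standard vector-embedded $\Spin(7)$ under an outer automorphism $\tau$ of $\Spin(8)$ cyclically permuting $8_v,8_s,8_c$. Thus I would first apply $\tau$ to the $\Spin(8)$-highest weights appearing after the first step and then apply the ordinary $D_4\downarrow B_3$ rule. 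Composing the two interval-type laws produces an explicit piecewise-linear, Kostant-partition-function expression for $m^{K,M}_{\mu}(\nu)$; equivalently one may apply the alternating-sum formula of Lemma \ref{branching rule lemma} directly to $G\downarrow K$ with the set $A=R_G^+\setminus R_K^+=\{\tfrac12(\eps_1\pm\eps_2\pm\eps_3\pm\eps_4)\}$ of eight short roots, which is the route the computer-algebra computation of \S\ref{F4} carries out.

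With the branching in hand, the positive direction is a collapse argument. For $\mu$ in the face $\langle\omega_1,\omega_2\rangle$, or on the ray $\N\omega_3$, or on the ray $\N\omega_4$, the vanishing of most Dynkin labels of $\mu$ degenerates the two nested interval rules so that the governing partition-function difference takes only the values $0$ and $1$; this is the content to be verified face by face, yielding $m^{K,M}_{\mu}(\nu)\le1$. For the converse I would argue that the three listed faces are exactly the maximal multiplicity-free faces, which reduces the claim to showing that every strictly larger face fails. Since $\N\omega_1,\N\omega_2\subset\langle\omega_1,\omega_2\rangle$ are automatically multiplicity free, it suffices to treat the five minimal ``bad'' two-dimensional faces $\langle\omega_1,\omega_3\rangle$, $\langle\omega_2,\omega_3\rangle$, $\langle\omega_3,\omega_4\rangle$, $\langle\omega_1,\omega_4\rangle$, $\langle\omega_2,\omega_4\rangle$, exhibiting for each a single $\mu$ on that face together with a $\nu$ (equivalently a $\lambda\in P_G^+$) for which the multiplicity is at least $2$; every larger face contains one of these, hence also fails.

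The main obstacle is the triality bookkeeping together with the combinatorial size of the computation. Tracking the correct dictionary between the fundamental weights $\omega_3,\omega_4$ of $\Spin(9)$ and the vector versus spinor directions of $\Spin(8)$ after the outer twist $\tau$ is where labelling and sign errors naturally arise, and the alternating sum over $W_{\F_4}$ (of order $1152$) against the partition function of the eight roots in $A$ is heavy. This is precisely why the explicit branching rules of \S\ref{F4} are obtained with the aid of computer algebra, after which the face-by-face collapse and the finite list of counterexamples can be read off directly.
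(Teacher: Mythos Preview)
Your strategy is sound: the reduction via Proposition~\ref{prop: reducing to M} to the branching $K=\Spin(9)\downarrow M=\Spin(7)$ (spin-embedded), followed by the two-step interlacing $B_4\downarrow D_4\downarrow B_3$ with an outer twist on the $D_4$ level, is exactly the right route and matches the tools the paper assembles in \S\ref{F4} (the identification of $M$ in (\ref{embedding of M}) and the Baldoni~Silva branching recalled in Proposition~\ref{prop: branching spin9 -> spin7}). Note, however, that the paper does \emph{not} give its own proof of Theorem~\ref{thm: multiplicity free condition}: it simply cites \cite{He et al} (a classification via double flag varieties) and \cite[Lem.~2.2.10]{van Pruijssen}. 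Your approach is essentially the one behind the second reference, since Proposition~\ref{prop: reducing to M} is \cite[Prop.~2.2.9]{van Pruijssen}.

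Two small corrections. First, the outer automorphism $\tau$ used here is the involution of $D_4$ interchanging the simple roots $\eps_1-\eps_2$ and $\eps_3-\eps_4$ (equivalently the first and third fundamental weights), not a cyclic triality; this does not affect your argument but does affect the bookkeeping. Second, the computer algebra in \S\ref{F4} is deployed for Theorem~\ref{thm: F4bottom} (locating the bottom of the well via the $\F_4$ Kostant formula), not for Theorem~\ref{thm: multiplicity free condition}; the $\Spin(9)\downarrow\Spin(7)$ branching you need is handled by the explicit interlacing rules of \cite{Baldoni Silva} and does not require the $|W_{\F_4}|=1152$ sum. With these adjustments your face-by-face collapse for the positive direction and finite list of counterexamples for the converse would constitute a complete proof.
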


This result has been obtained in \cite{He et al} as part of a classification. Another proof is given in \cite[Lem.~2.2.10]{van Pruijssen}.

The pair $(G,K)$ is a symmetric pair and choosing the maximal anisotropic torus $T\subset G$ (a circle group) as in \cite{Baldoni Silva} we have $Z_{K}(T)=M\cong\Spin(7)$, where the embedding $\Spin(7)\to\Spin(8)$ is twisted:
\begin{eqnarray}\label{embedding of M}
\laso(7,\bbC)\subset\laso(8,\bbC)\stackrel{\tau}{\to}\laso(8,\bbC)\subset\laso(9,\bbC),
\end{eqnarray}
with $\tau$ the automorphism that interchanges the roots $\eps_{1}-\eps_{2}$ and $\eps_{3}-\eps_{4}$, see \cite{Baldoni Silva}. We fix the maximal torus $\lah_{M}=\lam\cap\lah$ and choose the positive roots $\Delta(\lam,\lah_{M})$ such that the set of simple roots equals
$$\Pi_{M}=\{\delta_{1}=\eps_{3}-\eps_{4},\delta_{2}=\eps_{2}-\eps_{3},\delta_{3}=\frac{1}{2}(\eps_{1}-\eps_{2}+\eps_{3}+\eps_{4})\}.$$
The corresponding fundamental weights are given by
$$\eta_{1}=\frac{1}{2}(\eps_{1}+\eps_{2}+\eps_{3}-\eps_{4}),\eta_{2}=\eps_{1}+\eps_{2},\eta_{3}=\frac{1}{4}(3\eps_{1}+\eps_{2}+\eps_{3}+\eps_{4}).$$
The spherical weight is $\lambda_{\sph}=\varpi_{1}$. We want to calculate the map $P^{+}_{G}\to P^{+}_{M}$, but $\lambda_{\sph}$ is not perpendicular to $P^{+}_{M}$. Hence we pass to another Weyl chamber, and project along the new spherical direction, which is perpendicular to $P^{+}_{M}$. Choose a Weyl group element $w_{M}\in W_{G}$ such that the Weyl chamber $w_{M} P^{+}_{G}$ has the following properties: (1) $w_{M}\lambda_{\sph}\perp P_{M}$ and (2) the projection along $w_{M}\lambda_{\sph}$ induces a map $w_{M}P^{+}_{G}\to P^{+}_{M}$. We ask Mathematica \cite{Mathematica} to go through the list of Weyl group elements and test for these properties. We find two Weyl group elements, $w_{M}$ and $s_{1}w_{M}$, where

\begin{eqnarray}\label{eqn: element w}
w_M=\left(
\begin{array}{cccc}
 \frac{1}{2} & \frac{1}{2} & \frac{1}{2} & \frac{1}{2} \\
 -\frac{1}{2} & \frac{1}{2} & \frac{1}{2} & -\frac{1}{2} \\
 -\frac{1}{2} & \frac{1}{2} & -\frac{1}{2} & \frac{1}{2} \\
 -\frac{1}{2} & -\frac{1}{2} & \frac{1}{2} & \frac{1}{2}
\end{array}
\right)
\end{eqnarray}
with respect to the basis $\{\eps_{1},\eps_{2},\eps_{3},\eps_{4}\}$.

\begin{lemma}\label{lemma: M projection}
Let $q:P^{+}_{G}\to P^{+}_{M}$ be given by $q(\lambda)=w_{M}(\lambda)|_{\lah_{M}}$, where $w_{M}$ is given by (\ref{eqn: element w}). Then $q(P^{+}_{G}(\mu))=P^{+}_{M}(\mu)$ and  $q(\sum_{i=1}^{4}\lambda_{i}\varpi_{i})=\lambda_{4}\eta_{1}+\lambda_{3}\eta_{2}+\lambda_{2}\eta_{3}$.
\end{lemma}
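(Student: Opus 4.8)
The plan is to treat the two assertions of the lemma separately: the explicit formula for $q$ is a finite linear-algebra computation, while the identity $q(P^+_G(\mu))=P^+_M(\mu)$ is deduced from Proposition \ref{projection from induced to restricted spectrum} once $q$ is recognized as the natural projection along the spherical direction.

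For the explicit formula I would first record that, because $w_M\in W_G$ is orthogonal, $w_M\lambda_{\sph}=w_M\varpi_1=\tfrac12(\eps_1-\eps_2-\eps_3-\eps_4)$ has unit length, and by design (property (1) of $w_M$) it is orthogonal to $P_M$. Hence restriction to $\lah_M$ coincides with the orthogonal projection $\xi\mapsto\xi-\langle\xi,w_M\lambda_{\sph}\rangle\,w_M\lambda_{\sph}$ onto $(w_M\lambda_{\sph})^{\perp}\supset P_M$. I would then apply $w_M$ (read off as the columns of the matrix (\ref{eqn: element w})) to the four fundamental weights and project, finding $q(\varpi_1)=0$, $q(\varpi_2)=\eta_3$, $q(\varpi_3)=\eta_2$ and $q(\varpi_4)=\eta_1$. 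For instance $w_M\varpi_4=\eps_1-\eps_4$ has inner product $1$ with $w_M\lambda_{\sph}$, so $q(\varpi_4)=\eps_1-\eps_4-w_M\lambda_{\sph}=\tfrac12(\eps_1+\eps_2+\eps_3-\eps_4)=\eta_1$, and the other three are entirely analogous. The stated formula $q(\sum_i\lambda_i\varpi_i)=\lambda_4\eta_1+\lambda_3\eta_2+\lambda_2\eta_3$ then follows by linearity.

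For the spectral identity I would argue that $q$ is exactly the natural projection $p:P^+_G\to P^+_M$ of Proposition \ref{projection from induced to restricted spectrum}, which sends a dominant $\lambda$ to the highest weight of the $M$-module $(V^G_\lambda)^{\lan_c}$. In an Iwasawa-adapted positive system — one in which the parabolic $\lam_c\lat_c\lan_c$ is standard and the spherical direction is perpendicular to $P_M$ — this projection is literally the restriction of the $G$-highest weight to $\lah_M$, since the $G$-highest weight vector is then already annihilated by $\lan_c$. The two defining properties of $w_M$ say precisely that $w_MP^+_G$ is such a chamber: property (1) makes $w_M\lambda_{\sph}\perp P_M$, and property (2) guarantees that the restriction of every $G$-dominant weight lands in $P^+_M$. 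Consequently $q(\lambda)=(w_M\lambda)|_{\lah_M}=p(\lambda)$, and the surjectivity statement of Proposition \ref{projection from induced to restricted spectrum} yields $q(P^+_G(\mu))=P^+_M(\mu)$. This is consistent with $q(\lambda_{\sph})=q(\varpi_1)=0$ from the first part, which forces $P^+_G(\mu)=B(\mu)+\N\lambda_{\sph}$ to collapse onto $q(B(\mu))=P^+_M(\mu)$.

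The main obstacle is the identification $q=p$ in the previous paragraph: the Cartan subalgebra $\lah$ of $\F_4$ used here contains $\lah_M$ but not the anisotropic line $\lat$, whereas Proposition \ref{projection from induced to restricted spectrum} is phrased for the Cartan $\lat_c\oplus\lah_{M,c}$. One must therefore check that the conjugation relating these two Cartan subalgebras, which both contain $\lah_M$, is implemented on $\lah$-dominant weights exactly by the Weyl element $w_M$ singled out by properties (1) and (2); once this change of frame is justified, the remaining content is the bookkeeping carried out above.
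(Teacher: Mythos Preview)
Your proposal is correct and follows essentially the same route as the paper's (very terse) proof. The paper cites Proposition~\ref{prop: asymptotics} directly for the surjectivity, whereas you cite Proposition~\ref{projection from induced to restricted spectrum}; since the surjectivity clause of the latter is proved precisely by invoking the former, this is the same argument. For the explicit formula the paper performs a base change of $w_M$ to the basis $\{\eta_1,\eta_2,\eta_3,\alpha_1\}$, while you compute $w_M\varpi_i$ column by column and then project orthogonally along $w_M\lambda_{\sph}=\alpha_1$; these are two ways of organising the same linear algebra, and your sample computation $q(\varpi_4)=\eta_1$ checks out (and so do the other three). Your closing caveat about identifying $q$ with the abstract projection $p$ across the two Cartan subalgebras is a genuine point the paper leaves implicit; it is exactly what the defining properties (1) and (2) of $w_M$ are there to guarantee, so your reading is the intended one.
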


\begin{proof}
The surjectivity is implied by Proposition \ref{prop: asymptotics}. The calculation involves a base change for $w_{M}$ with basis $\{\eta_{1},\eta_{2},\eta_{3},\alpha_{1}\}$ and follows readily.
\end{proof}

It follows that $\lambda=\lambda_{1}\varpi_{1}+\lambda_{2}\varpi_{2}+\lambda_{3}\varpi_{3}+\lambda_{4}\varpi_{4}\in P^{+}_{G}(\mu)$ implies that $\lambda_{4}\eta_{1}+\lambda_{3}\eta_{2}+\lambda_{2}\eta_{3}\in P^{+}_{M}(\mu)$. The branching rule $\Spin(9)\to\Spin(7)$ is described in \cite[Thm.~6.3]{Baldoni Silva} and we recall it for our special choices of $\mu$. It is basically the same as branching $B_{4}\downarrow D_{4}\downarrow B_{3}$ via interlacing, see e.g.~\cite[Thm.~9.16]{Knapp}, but on the $D_{4}$-level we have to interchange the coefficients of the first and the third fundamental weight.

\begin{proposition}\label{prop: branching spin9 -> spin7}
The spectrum $P^{+}_{M}(\mu)$ is given by the following inequalities.
\begin{itemize}\item Let $\mu=\mu_{1}\omega_{1}+\mu_{2}\omega_{2}$. Then $\lambda_{4}\eta_{1}+\lambda_{3}\eta_{2}+\lambda_{2}\eta_{3}\in P^{+}_{M}(\mu)$ if and only if
\begin{eqnarray*}
\lambda_{2}+\lambda_{3}+\lambda_{4}\le\mu_{1}+\mu_{2},\\
\lambda_{3}+\lambda_{4}\le\mu_{2}\le\lambda_{2}+\lambda_{3}+\lambda_{4}.
\end{eqnarray*}
\item Let $\mu=\mu_{2}\omega_{3}$. Then $\lambda_{4}\eta_{1}+\lambda_{3}\eta_{2}+\lambda_{2}\eta_{3}\in P^{+}_{M}(\mu)$ if and only if
\begin{eqnarray*}
\lambda_{2}+\lambda_{3}\le\mu_{3},\\
\lambda_{3}+\lambda_{4}\le\mu_{3}\le\lambda_{2}+\lambda_{3}+\lambda_{4}.
\end{eqnarray*}
\item Let $\mu=\mu_{4}\omega_{4}$. Then $\lambda_{4}\eta_{1}+\lambda_{3}\eta_{2}+\lambda_{2}\eta_{3}\in P^{+}_{M}(\mu)$ if and only if
\begin{eqnarray*}
\lambda_{3}=0,\\
\lambda_{2}+\lambda_{4}\le\mu_{4}.
\end{eqnarray*}

\end{itemize}
\end{proposition}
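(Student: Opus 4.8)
The plan is to realize the twisted restriction $\Spin(9)\downarrow\Spin(7)$ as the composite interlacing branching $B_4\downarrow D_4\downarrow B_3$ of Knapp \cite[Thm.~9.16]{Knapp}, modified on the $D_4$-level by the triality automorphism $\tau$ coming from the embedding (\ref{embedding of M}); this is exactly the content of \cite[Thm.~6.3]{Baldoni Silva}. Writing $\mu=(a_1,a_2,a_3,a_4)$ in standard $B_4$-coordinates, the rule is that $\nu\in P^+_M(\mu)$ if and only if there is a $D_4$-weight $(c_1,c_2,c_3,c_4)$ with
\[ a_1\ge c_1\ge a_2\ge c_2\ge a_3\ge c_3\ge a_4\ge|c_4| \]
whose image $(c_1',c_2',c_3',c_4')$ under $\tau$ (the triality element interchanging the coefficients of the first and third $D_4$ fundamental weights) interlaces the target weight $\nu=(d_1,d_2,d_3)$ in standard $B_3$-coordinates:
\[ c_1'\ge d_1\ge c_2'\ge d_2\ge c_3'\ge d_3\ge|c_4'|. \]
Since $\tau$ is a diagram automorphism it preserves $D_4$-dominance, so the middle weight needs no separate dominance check. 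As $(G,K,F)$ is multiplicity free by Theorem \ref{thm: multiplicity free condition}, and hence $\Spin(9)\downarrow\Spin(7)$ is multiplicity free by Proposition \ref{prop: reducing to M}, it suffices to settle the \emph{existence} of such a chain.

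Next I would fix coordinates so that $\nu$ can be read off the $\eta_i$. Setting $f_3=\delta_3$, $f_2=\delta_2+f_3$, $f_1=\delta_1+f_2$ produces the standard $B_3$-basis of $\lah_{M}^{*}$, and a one-line check gives $f_1=\eta_1$, $f_1+f_2=\eta_2$ and $\tfrac12(f_1+f_2+f_3)=\eta_3$, so the $\eta_i$ are precisely the $B_3$ fundamental weights. Consequently, by Lemma \ref{lemma: M projection}, the weight $\nu=\lambda_4\eta_1+\lambda_3\eta_2+\lambda_2\eta_3$ has standard $B_3$-coordinates
\[ (d_1,d_2,d_3)=\big(\lambda_4+\lambda_3+\tfrac12\lambda_2,\ \lambda_3+\tfrac12\lambda_2,\ \tfrac12\lambda_2\big). \]

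I would then specialize to the three sparse $\mu$, with $B_4$-coordinates $(\mu_1+\mu_2,\mu_2,0,0)$, $(\mu_3,\mu_3,\mu_3,0)$ and $\tfrac12(\mu_4,\mu_4,\mu_4,\mu_4)$ respectively. In each case the zeros and coincidences among the $a_i$ force all but one of the $c_i$ through the first interlacing, leaving a single free parameter ($c_3$, or $c_4$ in the $\omega_4$-case). Applying $\tau$ by swapping the first and third entries of the fundamental-weight vector $(c_1-c_2,c_2-c_3,c_3-c_4,c_3+c_4)$ and converting back gives an explicit $(c_1',c_2',c_3',c_4')$; feeding the $(d_1,d_2,d_3)$ above into the second interlacing then yields the stated inequalities. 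For instance, in the $\omega_4$-case the twist produces $c_2'=c_3'=c_4'$, which forces $d_2=d_3$, i.e. $\lambda_3=0$; the equation $d_3=\tfrac12\lambda_2$ fixes the free parameter, and the one surviving bound $c_1'\ge d_1$ collapses to $\lambda_2+\lambda_4\le\mu_4$. The $\{\omega_1,\omega_2\}$ and $\omega_3$ cases run identically and reproduce the two-line systems in the statement.

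The step I expect to be the main obstacle is the bookkeeping of the triality twist: one must confirm that the automorphism induced by $\tau$ in (\ref{embedding of M}) is the $\omega_1\leftrightarrow\omega_3$ swap, and not some other triality element, and one must track the sign of the twisted half-spin coordinate $c_4'$, which can turn negative so that $|c_4'|$ rather than $c_4'$ is the operative lower bound in the second interlacing. Once the correct twist is pinned down, the integrality and coset conditions are automatic, and the remainder is the routine collapse of interlacing chains for the highly degenerate coordinate vectors of the three $\mu$.
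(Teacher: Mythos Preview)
Your proposal is correct and follows exactly the route the paper indicates: the paper does not give a detailed proof but simply invokes \cite[Thm.~6.3]{Baldoni Silva}, describing the rule as the interlacing chain $B_4\downarrow D_4\downarrow B_3$ of \cite[Thm.~9.16]{Knapp} with the $D_4$-level modification ``interchange the coefficients of the first and the third fundamental weight''. Your worry about which triality element to use is thus resolved by the paper's own description, and your worked $\omega_4$ example (where $c_2'=c_3'=c_4'$ forces $\lambda_3=0$) recovers the stated result; the other two cases go the same way.
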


Given an element $\mu\in P^{+}_{K}$ we can determine the $M$-types $\nu=\nu_{1}\eta_{1}+\nu_{2}\eta_{2}+\nu_{3}\eta_{3}\in P^{+}_{M}(\mu)$ and we know from Proposition \ref{prop: asymptotics} that for $\lambda_{1}$ large enough,
\begin{eqnarray}\label{eqn: well limit}
\lambda=\lambda_{1}\varpi_{1}+\nu_{3}\varpi_{2}+\nu_{2}\varpi_{3}+\nu_{1}\varpi_{4}\in P^{+}_{G}(\mu).
\end{eqnarray}
We proceed to determine the minimal $\lambda_{1}$ such that (\ref{eqn: well limit}) holds, in the case that $\mu$ satisfies the multiplicity free condition of Theorem \ref{thm: multiplicity free condition}.

\begin{theorem}\label{thm: F4bottom}
Let $\mu\in(\bbN\omega_{1}\oplus\bbN\omega_{2})\cup(\bbN\omega_{3})\cup(\bbN\omega_{4})$. Then $\lambda=\lambda_{1}\varpi_{1}+\lambda_{2}\varpi_{2}+\lambda_{3}\varpi_{3}+\lambda_{4}\varpi_{4}\in B(\mu)$ if and only if (i) $q(\lambda)\in P^{+}_{M}(\mu)$ and (ii)
\begin{eqnarray}
\mu_{1}+\mu_{2}=\lambda_{1}+\lambda_{2}+\lambda_{3}+\lambda_{4} & \mbox{if} & \mu=\mu_{1}\omega_{1}+\mu_{2}\omega_{2},\label{eqn: bottom 1}\\
\mu_{3}=\lambda_{1}+\lambda_{2}+\lambda_{3} & \mbox{if} & \mu=\mu_{3}\omega_{3},\label{eqn: bottom 2}\\
\mu_{4}=\lambda_{1}+\lambda_{2}+\lambda_{4} & \mbox{if} & \mu=\mu_{4}\omega_{4}\label{eqn: bottom 3}.
\end{eqnarray}
\end{theorem}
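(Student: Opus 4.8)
The plan is to reduce the description of the bottom to the computation, on each fibre of the projection $q$, of the minimal value of the coefficient $\lambda_{1}$ of the spherical weight $\lambda_{\sph}=\varpi_{1}$, and then to pin this value down family by family. The starting observation is that the explicit formula $q(\sum_{i}\lambda_{i}\varpi_{i})=\lambda_{4}\eta_{1}+\lambda_{3}\eta_{2}+\lambda_{2}\eta_{3}$ of Lemma \ref{lemma: M projection} does not involve $\lambda_{1}$, so $q(\lambda+k\varpi_{1})=q(\lambda)$ for all $k$; thus $q$ is constant along the spherical rays. Combined with the well shape $P_{G}^{+}(\mu)=B(\mu)+\N\varpi_{1}$ and the surjectivity $q(P_{G}^{+}(\mu))=P_{M}^{+}(\mu)$ of Lemma \ref{lemma: M projection}, this shows that $q$ restricts to a bijection $B(\mu)\to P_{M}^{+}(\mu)$: each fibre meets the well in a single ray whose foot is the unique point of $B(\mu)$ on it. Hence for $\lambda\in B(\mu)$ the coefficients $\lambda_{2},\lambda_{3},\lambda_{4}$ are prescribed by $\nu=q(\lambda)\in P_{M}^{+}(\mu)$ through $\lambda_{2}=\nu_{3}$, $\lambda_{3}=\nu_{2}$, $\lambda_{4}=\nu_{1}$, and only $\lambda_{1}$ remains free.

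This already gives necessity of (i): since $B(\mu)\subseteq P_{G}^{+}(\mu)$, Lemma \ref{lemma: M projection} forces $q(\lambda)\in P_{M}^{+}(\mu)$, and Proposition \ref{prop: branching spin9 -> spin7} rewrites this membership as the stated inequalities in $\lambda_{2},\lambda_{3},\lambda_{4}$ (in particular $\lambda_{3}=0$ in the $\omega_{4}$ case). It remains to locate the foot of each ray, i.e. the threshold $\lambda_{1}^{\min}(\nu)$. Here I would invoke the monotonicity and stabilisation of Proposition \ref{prop: asymptotics}: along a fixed fibre the multiplicity $m^{G,K}_{\lambda_{1}\varpi_{1}+\nu_{3}\varpi_{2}+\nu_{2}\varpi_{3}+\nu_{1}\varpi_{4}}(\mu)$ is non-decreasing in $\lambda_{1}$ and, since $\nu\in P_{M}^{+}(\mu)$ with all multiplicities $\le 1$, stabilises to $1$. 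As these values lie in $\{0,1\}$, the fibre meets the well exactly in an up-set $\{\lambda_{1}\ge\lambda_{1}^{\min}\}$, so that $\lambda\in B(\mu)$ is equivalent to $\lambda_{1}=\lambda_{1}^{\min}(\nu)$. Condition (ii) is then precisely the claim that $\lambda_{1}^{\min}$ equals the displayed linear expression.

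To establish (ii) I would treat the two bounds separately. For the lower bound $\lambda_{1}\ge\lambda_{1}^{\min}$ (equivalently, that the point just below the claimed foot is not in $P_{G}^{+}(\mu)$) one can try the weight-containment route: if $V^{K}_{\mu}\hookrightarrow V^{G}_{\lambda}|_{K}$ then $\mu$, being an $H$-weight of $V^{G}_{\lambda}$, satisfies $\lambda-\mu\in\sum_{i}\N\alpha_{i}$; pairing with an appropriate functional that is non-negative on $R_{G}^{+}$ and substituting the $\eps$-expansions of the $\varpi_{i}$ and of $\mu$ should reproduce, after a short computation, the linear inequality of (ii) in each of the three families. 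The matching upper bound, namely existence of the $K$-type $\mu$ already at the claimed foot, is the genuinely hard point: monotonicity only propagates membership upward, and Proposition \ref{prop: asymptotics} only secures it for large $\lambda_{1}$. For this I would evaluate the $G\to K$ branching multiplicity directly from the Kostant--Heckman alternating sum over $W_{G}$ (of order $1152$), as announced via computer algebra, restricted to the relevant fibre, and verify that it already equals $1$ at $\lambda_{1}=\lambda_{1}^{\min}$ while it vanishes at $\lambda_{1}^{\min}-1$.

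The main obstacle is exactly this sharpness of the threshold: controlling the large alternating sum of partition functions near the bottom of the well, where the cancellations are delicate and several Weyl images $w(\lambda+\rho_{G})-\rho_{G}$ contribute simultaneously. I expect the three families $\mu\in\N\omega_{1}\oplus\N\omega_{2}$, $\mu\in\N\omega_{3}$, $\mu\in\N\omega_{4}$ to require separate bookkeeping, with the $\omega_{4}$ case singled out by the extra constraint $\lambda_{3}=0$ from Proposition \ref{prop: branching spin9 -> spin7}; once the branching function is in hand, reading off that the multiplicity jumps from $0$ to $1$ exactly across the hyperplane of (ii) is a finite check.
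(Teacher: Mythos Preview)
Your overall architecture matches the paper's: reduce to fibres of $q$, show that on each fibre the multiplicity is $0$ below a threshold and $1$ at and above it, and identify that threshold with the linear expression in (ii) via the Kostant--Heckman formula. Your invocation of Lemma~\ref{lemma: M projection} and Proposition~\ref{prop: asymptotics} for the bijection $B(\mu)\to P_M^+(\mu)$ and for monotonicity is correct and is exactly how the paper sets things up.

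There is, however, a genuine gap in your proposed shortcut for the lower bound. The weight--containment argument (``$\mu$ is a weight of $V_\lambda^G$, hence $\lambda-\mu\in\sum_i\N\alpha_i$, now pair with a functional nonnegative on $R_G^+$'') cannot produce the sharp inequality in (ii). Concretely, for $\mu=\mu_1\omega_1+\mu_2\omega_2$ one would need a functional $f$ with $f(\varpi_i)=1$ for $i=1,\dots,4$ and $f(\omega_1)=f(\omega_2)=1$; solving in the $\epsilon$-basis forces $f=\epsilon_1^*-\epsilon_3^*$, but then $f(\alpha_3)=f(\epsilon_3-\epsilon_4)=-1<0$. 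For $\mu=\mu_3\omega_3$ the linear constraints on $f$ are already inconsistent. So no functional in the closed dominant chamber recovers the hyperplane of (ii), and the convex weight bound is strictly weaker than what you need. The paper does not attempt such a shortcut: it verifies \emph{both} $m^{G,K}_\lambda(\mu)=1$ and $m^{G,K}_{\lambda-\lambda_{\sph}}(\mu)=0$ directly from the partition--function formula.

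What the paper supplies, and what your sketch underestimates, is the work needed to make the $|W_G|=1152$--term alternating sum tractable. For the standard positive system the relevant partition function is $p_A$ with $A=\{\tfrac12(\epsilon_1\pm\epsilon_2\pm\epsilon_3\pm\epsilon_4)\}$; its support is the cube $|\Lambda_j|\le\Lambda_1$, and on the hyperplane of (ii) only the Weyl elements $e,s_2$ survive in the $\omega_1,\omega_2$ case. For $\mu\in\N\omega_3\cup\N\omega_4$ the paper first passes to a different positive system via $\widetilde{w}=s_2s_1$ (so that $\omega_3,\omega_4$ lie in the new chamber), computes a second partition function $p_B$ explicitly, and reduces to twelve Weyl elements listed in Table~\ref{table: Weyl group elements}. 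Without this reduction the ``finite check'' you allude to is not feasible by hand; with it, both the multiplicity~$1$ at the foot and the multiplicity~$0$ just below follow.
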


Hence the bottom $B(\mu)$ is given by a singular equation and the inequalities of $P^{+}_{M}(\mu)$ in all cases, except for $(G,K)=(\SU(n+1),\mathrm{S}(\U(n)\times\U(1)))$. We have found the inequalities of Theorem \ref{thm: F4bottom} using an implementation of the branching rule from $\F_{4}$ to $\Spin(9)$ in Mathematica and looking at some examples. Before we prove Theorem \ref{thm: F4bottom} we settle the proof of the final case of Theorem \ref{degree inequality theorem}. 

\begin{corollary}
Let $\lambda\in P^{+}_{G}(\mu)\to\bbN$ and let $\lambda'\in P$ be a weight of the spherical representation. Then $|d(\lambda+\lambda')-d(\lambda|\le1$ with $d:P^{+}_{G}(\mu)\to\bbN$ the degree function of Theorem \ref{degree inequality theorem}. 
\end{corollary}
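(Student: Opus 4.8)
The plan is to turn the inequality into a finite check on the weights of the spherical representation $V^G_{\varpi_1}$, by first observing that on each of the three wells the degree function is \emph{affine}. Since $\lambda_{\sph}=\varpi_1$, passing from $\lambda$ to $\lambda+\varpi_1$ changes only the $\varpi_1$-coordinate and raises $d$ by one, so $d(\lambda)$ equals $\lambda_1$ minus the $\varpi_1$-coordinate of the bottom point sharing the remaining coordinates. Reading this off from the bottom equations (\ref{eqn: bottom 1})--(\ref{eqn: bottom 3}) of Theorem \ref{thm: F4bottom} gives
\begin{align*}
d(\lambda)&=\lambda_1+\lambda_2+\lambda_3+\lambda_4-(\mu_1+\mu_2), &&\mu=\mu_1\omega_1+\mu_2\omega_2,\\
d(\lambda)&=\lambda_1+\lambda_2+\lambda_3-\mu_3, &&\mu=\mu_3\omega_3,\\
d(\lambda)&=\lambda_1+\lambda_2+\lambda_4-\mu_4, &&\mu=\mu_4\omega_4,
\end{align*}
where $\lambda=\sum_i\lambda_i\varpi_i$. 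Hence for a weight $\lambda'$ of $V^G_{\varpi_1}$ with $\lambda+\lambda'\in P^+_G(\mu)$ one has $d(\lambda+\lambda')-d(\lambda)=\ell_F(\lambda')$, with $\ell_F$ the linear part of $d$ attached to the facet $F$; it remains to show $\ell_F(\lambda')\in\{-1,0,1\}$.

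Next I would list the weights of the $26$-dimensional representation $V^G_{\varpi_1}$, namely the short roots $\pm\eps_i$ and $\tfrac12(\pm\eps_1\pm\eps_2\pm\eps_3\pm\eps_4)$ together with the zero weight, and evaluate $\ell_F$ on them. Using $\eps_1=\varpi_1$, $\eps_2=-\varpi_1+\varpi_4$, $\eps_3=-\varpi_1+\varpi_3-\varpi_4$ and $\eps_4=-\varpi_1+2\varpi_2-\varpi_3$, one finds that in the coordinates $v=(v_1,v_2,v_3,v_4)$ relative to $\{\eps_i\}$ the functionals are $\ell_F(v)=v_1-v_3$ for the face $\{\omega_1,\omega_2\}$ and $\ell_F(v)=v_1-v_2$ for the face $\omega_3$. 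On each short root these take values in $\{-1,0,1\}$, and on a half-integral weight $\tfrac12(s_1,s_2,s_3,s_4)$ they equal $\tfrac12(s_1-s_3)$, respectively $\tfrac12(s_1-s_2)$, again in $\{-1,0,1\}$, while the zero weight gives $0$. This settles the first two facets with no further input.

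The one genuine obstacle is the facet $\mu=\mu_4\omega_4$, where the same computation yields $\ell_F(v)=v_1-2v_3+v_4$; this has modulus $2$ on $\pm\eps_3$ and on the half-integral weights with $s_3\neq s_4$, so the bare functional does not suffice. The resolution is that these weights cannot occur: by Proposition \ref{prop: branching spin9 -> spin7} the condition $q(\lambda)\in P^+_M(\mu)$ for $\mu=\mu_4\omega_4$ forces $\lambda_3=0$, and since $q$ is constant along the spherical direction $\varpi_1$ this vanishing propagates over the whole well $P^+_G(\mu)=B(\mu)+\bbN\varpi_1$. Thus both $\lambda$ and $\lambda+\lambda'$ have vanishing $\varpi_3$-coordinate, forcing the $\varpi_3$-coordinate of any admissible spherical weight $\lambda'$ to be zero. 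Expanding the weights of $V^G_{\varpi_1}$ in the $\varpi$-basis, this condition discards $\pm\eps_3,\pm\eps_4$ (keeping only $\pm\eps_1,\pm\eps_2$ and $0$ among the integral weights, on which $\ell_F\in\{-1,0,1\}$) and restricts the half-integral weights to those with $s_3=s_4$, where $\ell_F=\tfrac12(s_1-s_3)\in\{-1,0,1\}$. This completes the verification for all three facets and yields $|d(\lambda+\lambda')-d(\lambda)|\le1$, establishing the final case of Theorem \ref{degree inequality theorem}.
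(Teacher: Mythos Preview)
Your proof is correct and follows the same strategy as the paper's own argument: list the weights of $V^G_{\varpi_1}$ (the short roots together with zero), express them in the basis of fundamental weights, and check that the affine degree function changes by at most one. The paper compresses all of this into a single sentence (``after expressing these weights as linear combinations of fundamental weights, one easily checks the assertion''), whereas you carry out the computation explicitly.

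Your treatment actually makes visible a point the paper's terse proof leaves implicit: on the facet $\mu=\mu_4\omega_4$ the naive linear functional $\ell_F(v)=v_1-2v_3+v_4$ can have absolute value $2$ on some short roots, and one really does need the structural constraint $\lambda_3=0$ from Proposition~\ref{prop: branching spin9 -> spin7} (which propagates over the whole well since $q$ is constant along $\varpi_1$) to rule those weights out. This is exactly right, and it is the kind of detail that justifies the paper's ``easily''. One small imprecision: you write that $\ell_F$ has modulus $2$ on \emph{all} half-integral weights with $s_3\neq s_4$, but in fact only those with additionally $s_1\neq s_3$ are problematic; this does not affect the argument, since the constraint $\lambda'_3=0$ forces $s_3=s_4$ and eliminates all of them regardless.
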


\begin{proof}
The weights of the spherical representation are the short roots and zero (with multiplicity two). After expressing these weights as linear combinations of fundamental weights, one easily checks the assertion.
\end{proof}

\begin{proof}[Proof of Theorem \ref{thm: F4bottom}.]
The proof of Theorem \ref{thm: F4bottom} is devided into two parts, corresponding to the dimension of the face. The strategy in both cases is the same.
Fix $\mu\in(\bbN\omega_{1}\oplus\bbN\omega_{2})\cup(\bbN\omega_{3})\cup(\bbN\omega_{4})$ and choose a suitable system $R^{+}_{G}$ of positive roots of $G$. Let $A=R^{+}_{G}\backslash R^{+}_{K}$ and let $p_{A}$ denote the corresponding partition function. Let $\lambda\in P^{+}_{G}$ have the property that $q(\lambda)\in P^{+}_{M}(\mu)$. This gives restrictions on $\lambda_{2},\lambda_{3},\lambda_{4}$, according to Proposition \ref{prop: branching spin9 -> spin7}. Let $\lambda_{1}$ satisfy the appropriate linear equation from the theorem.

For $w\in W_G$ define $\Lambda_{w}(\lambda,\mu)=w(\lambda+\rho)-(\mu+\rho)$. Explicit knowledge of the partition function $p_{A}$ allows us, using Mathematica, to determine for which $w\in W_G$ the quantity $p_{A}(\Lambda_{w}(\lambda,\mu))$ is zero. We end up with two elements in case $\mu\in\bbN\omega_{1}\oplus\bbN\omega_{2}$ and twelve elements in the other cases, for which $p_{A}(\Lambda_{w}(\lambda,\mu))$ is \textit{possibly} not zero. This allows us to calculate $m^{G,K}_{\lambda}(\mu)$ using Lemma \ref{branching rule lemma}.  One checks that the multiplicity is one for this choice of $\lambda\in P^{+}_{G}(\mu)$.

Moreover, if $\mu\in\bbN\omega_{1}\oplus\bbN\omega_{2}$ then $p(\Lambda_{w}(\lambda-\lambda_{\sph}))=0$ for all Weyl group elements. In the other cases for $\mu$ we find the same twelve Weyl group elements for which $p_{A}(\Lambda_{w}(\lambda-\lambda_{\sph},\mu))$ \textit{possibly} does not vanish. One checks that the multiplicity is zero in this case.

We conclude the proof by indicating the the positive system that we chose in the various cases, a description of the partition function and lists of the Weyl group elements that may contribute in the Kostant multiplicity formula.

\paragraph{The case $\mu=\mu_{1}\omega_{1}+\mu_{2}\omega_{2}$.} Here we take the standard positive system $R^{+}_{G}$ and we have $A=R_{G}^{+}\backslash R^{+}_{K}=\{\frac{1}{2}(\eps_{1}\pm\eps_{2}\pm\eps_{3}\pm\eps_{4})\}$. Let $\Lambda=(\Lambda_{1},\Lambda_{2},\Lambda_{3},\Lambda_{4})\in P$. We claim that $p_{A}(\Lambda)>0$ if and only if $|\Lambda_{j}|\le\Lambda_{1}$ for $j=2,3,4$.

Let us denote $A=\{a_{000},\ldots,a_{111}\}$ where the binary index indicates where to put the $+$ or the $-$ sign on positions 2,3,4, e.g. $a_{100}=\frac{1}{2}(\eps_{1}-\eps_{2}+\eps_{3}+\eps_{4})$. Let
\begin{eqnarray}\label{equality 12}
\Lambda=\sum_{i=000}^{111}n_{i}a_{i}.
\end{eqnarray}
We are going to count the number of tuples $(n_{000},\ldots,n_{111})\in\bbN^{8}$ for which (\ref{equality 12}) holds. First of all, it follows from (\ref{equality 12}) that
$$\sum_{i=000}^{011}n_{i}=\Lambda_{1}+\Lambda_{2},\quad\sum_{i=100}^{111}n_{i}=\Lambda_{1}-\Lambda_{2}.$$
In other words, any linear combination (\ref{equality 12}) uses $\Lambda_{1}+\Lambda_{2}$ elements from the set $\{a_{000},\ldots,a_{011}\}$ and $\Lambda_{1}-\Lambda_{2}$ elements from the set $\{a_{100},\ldots,a_{111}\}$. Let us write $(\Lambda_{3},\Lambda_{4})=(v_{1},v_{2})+(\Lambda_{3}-v_{1},\Lambda_{4}-v_{2})$. For each such decomposition we need to count (1) the number of tuples $(n_{000},\ldots,n_{011})\in\bbN^{4}$ for which $\sum_{i=000}^{011}n_{i}a_{i}=((\Lambda_{1}+\Lambda_{2})/2,(\Lambda_{1}+\Lambda_{2})/2,v_{1},v_{2})$ and (2) the number of tuples $(n_{100},\ldots,n_{111})\in\bbN^{4}$ for which $\sum_{i=100}^{111}n_{i}a_{i}=((\Lambda_{1}-\Lambda_{2})/2,-(\Lambda_{1}-\Lambda_{2})/2,\Lambda_{3}-v_{1},\Lambda_{4}-v_{2})$. For each $(v_{1},v_{2})$ we take the product of these quantities, and summing these for the possible vectors $(v_{1},v_{2})$ yields the desired formula for $p_{A}$.

This reduces the calculation of $p_{A}$ to the following counting problem. Let $L=\bbZ^{2}\cup((\frac{1}{2},\frac{1}{2})+\bbZ^{2})$, let $A'=\{(\pm\frac{1}{2},\pm\frac{1}{2})\}$ and let $p\in\bbN$. Let us denote $A'=\{a'_{00},\ldots,a'_{11}\}$, where the binary number indicates where to put the $+$ and the $-$ signs, e.g. $a'_{10}=(-1/2,1/2)$. Given a vector $v=(v_{1},v_{2})$ we want to calculate the number of tuples $(n_{00},\ldots,n_{11})\in\bbN^{4}$ such that $\sum_{i=00}^{11}n_{i}a'_{i}=v$ and $\sum_{i=00}^{11}n_{i}=p$. It is necessary that $|v_{1}|,|v_{2}|\le p/2$. In this case, the number of tuples is $1+\frac{p}{2}-\max(|v_{1}|,|v_{2}|)$.

Returning to our original problem, we have
\begin{multline*}
p_{A}(\Lambda)=\sum_{v_{1},v_{2}}\left(1+\frac{\Lambda_{1}+\Lambda_{2}}{2}-\max(|v_{1}|,|v_{2}|)\right)\times \\ \left(1+\frac{\Lambda_{1}-\Lambda_{2}}{2}-\max(|\Lambda_{3}-v_{1}|,|\Lambda_{4}-v_{2}|)\right),
\end{multline*}
where $(v_{1},v_{2})$ satisfies the restrictions $|v_{1}|,|v_{2}|\le (\Lambda_{1}+\Lambda_{2})/2$ and simultaneously $|\Lambda_{3}-v_{1}|,|\Lambda_{4}-v_{2}|\le (\Lambda_{1}-\Lambda_{2})/2$. As a result, the ranges for the summations are
$$
v_{1}=\max\left(-\frac{\Lambda_{1}+\Lambda_{2}}{2},\Lambda_{3}-\frac{\Lambda_{1}-\Lambda_{2}}{2}\right),\ldots,\min\left(\frac{\Lambda_{1}+\Lambda_{2}}{2},\Lambda_{3}+\frac{\Lambda_{1}-\Lambda_{2}}{2}\right),$$
$$
v_{2}=\max\left(-\frac{\Lambda_{1}+\Lambda_{2}}{2},\Lambda_{4}-\frac{\Lambda_{1}-\Lambda_{2}}{2}\right),\ldots,\min\left(\frac{\Lambda_{1}+\Lambda_{2}}{2},\Lambda_{4}+\frac{\Lambda_{1}-\Lambda_{2}}{2}\right).
$$

In particular, $p_{A}(\Lambda)>0$ if and only if the ranges for $v_{1}$ and $v_{2}$ are both non-empty, which is equivalent to
\begin{eqnarray}
|\Lambda_{2}| & \le & \Lambda_{1}\label{ineq: A standard 1},\\
|\Lambda_{3}| & \le & \Lambda_{1}\label{ineq: A standard 2},\\ 
|\Lambda_{4}| & \le & \Lambda_{1}\label{ineq: A standard 3}.
\end{eqnarray}
The only two Weyl group elements for which $p_{A}(\Lambda_{w}(\lambda,\mu))$ contributes to the multiplicity $m^{G,K}_{\lambda}(\mu)$, under the assumptions (\ref{eqn: bottom 1}) and $q(\lambda)\in P^{+}_{M}(\mu)$ are $e,s_{2}$. In this case $m^{G,K}_{\lambda}(\mu)=1$. Also, $m^{G,K}_{\lambda-\lambda_{\sph}}(\mu)=0$ under the same conditions, as there are no Weyl group elements for which $p_{A}(\Lambda_{w}(\lambda-\lambda_{\sph},\mu))$ is non-zero.

\paragraph{The case $\mu=\mu_{3}\omega_{3}$ and $\mu=\mu_{4}\omega_{4}$.}

Let $\mu=\mu_{3}\omega_{3}$ or $\mu=\mu_{4}\omega_{4}$ and $\lambda\in B(\mu)$ and consider $\Lambda_{w}(\lambda,\mu)=w(\lambda+\rho)-(\mu+\rho)$ for $w\in W_{G}$. Using Mathematica to check the inequalities (\ref{ineq: A standard 1},\ref{ineq: A standard 2},\ref{ineq: A standard 3}) under the condition (\ref{eqn: bottom 2}) or (\ref{eqn: bottom 3}) we find a number of 16 Weyl group elements for which $\Lambda_{w}(\lambda,\mu)$ is possibly in the support of $p_{A}$. However, the formulas for the elements $\Lambda_{w}(\lambda,\mu)$ that possibly contribute do not look tempting to perform calculations with.

Instead we pass to another Weyl chamber for $\F_{4}$ while remaining in the same Weyl chamber for $\Spin(9)$. The Weyl chamber that we choose contains $\omega_{3}$ and $\omega_{4}$. The element $\widetilde{w}=s_{2}s_{1}\in W$ translates the standard Weyl chamber to one that we are looking for. The set of positive roots that corresponds to the system of simple roots is $\widetilde{w}\Pi_{G}=R^{+}_{K}\cup B$, where
\begin{multline*}B=\left\{
\frac{1}{2}(-\eps_{1}+\eps_{2}+\eps_{3}\pm\eps_{4}),
\frac{1}{2}(\eps_{1}-\eps_{2}+\eps_{3}\pm\eps_{4}),\right.\\
\left.\frac{1}{2}(\eps_{1}+\eps_{2}-\eps_{3}\pm\eps_{4}),
\frac{1}{2}(\eps_{1}+\eps_{2}+\eps_{3}\pm\eps_{4})\right\}
\end{multline*}
is the new set of positive roots of $G$ that are not roots of $K$. The Kostant multiplicity formula reads
$$m^{G,K}_{\lambda}(\mu)=\sum_{w\in W_{G}}\det(w)p_{B}(\Lambda_{w}(w(\lambda+\widetilde{\rho})-(\mu+\widetilde{\rho})),$$
where $\widetilde{\rho}=\frac{1}{2}(9\eps_{1}+7\eps_{2}+5\eps_{3}+\eps_{4})$ is the Weyl vector for the new system of positive roots. 

Our aim is to calculate the partition $p_{B}(\Lambda)$ for $\Lambda=(\Lambda_{1},\Lambda_{2},\Lambda_{3},\Lambda_{4})\in P$. To begin with we focus on the first three coordinates. Let $\pi:P\to\bbZ^{3}\cup((\frac{1}{2},\frac{1}{2},\frac{1}{2})+\bbZ^{3})$ denote the projection on the first three coordinates. Let $C=\{c_{1},c_{2},c_{3},c_{4}\}$ with
\begin{multline*}
c_{1}=\frac{1}{2}(-\eps_{1}+\eps_{2}+\eps_{3}),
c_{2}=\frac{1}{2}(\eps_{1}-\eps_{2}+\eps_{3}),\\
c_{3}=\frac{1}{2}(\eps_{1}+\eps_{2}-\eps_{3}),
c_{4}=\frac{1}{2}(\eps_{1}+\eps_{2}+\eps_{3}).$$
\end{multline*}
The number of linear combinations $\pi(\Lambda)=n_{1}c_{1}+n_{2}c_{2}+n_{3}c_{3}+n_{4}c_{4}$ with $n_{i}\in\bbN$ is non-zero if and only if
\begin{eqnarray}
0 & \le & \Lambda_{1}+\Lambda_{2},\label{ineq: B 1}\\
0 & \le & \Lambda_{1}+\Lambda_{3},\label{ineq: B 2}\\
0 & \le & \Lambda_{2}+\Lambda_{3}.\label{ineq: B 3}
\end{eqnarray}
We assume $\Lambda_{1}\ge\Lambda_{2}\ge\Lambda_{3}$. We have
\begin{eqnarray*}
(\Lambda_{1},\Lambda_{2},\Lambda_{3})&=&(\Lambda_{1}-\Lambda_{2})c_{1}+(\Lambda_{1}-\Lambda_{3})c_{2}+(\Lambda_{2}+\Lambda_{3})c_{4}\\
&=&(\Lambda_{1}-\Lambda_{2}+1)c_{1}+(\Lambda_{1}-\Lambda_{3}+1)c_{2}+c_{3}+(\Lambda_{2}+\Lambda_{3}-1)c_{4}\\
&\vdots&\\
&=&(\Lambda_{1}+\Lambda_{3})c_{1}+(\Lambda_{1}+\Lambda_{2})c_{2}+(\Lambda_{2}+\Lambda_{3})c_{3},
\end{eqnarray*}
from which we see that there are $\Lambda_{2}+\Lambda_{3}+1$ ways to write $(\Lambda_{1},\Lambda_{2},\Lambda_{3})$ as a linear combination of elements in $C$ with coefficients in $\bbN$. Every such combination uses a unique number of vectors: $2\Lambda_{1}+2r$, where $r=0,\ldots,\Lambda_{2}+\Lambda_{3}$.

Let $b_{i,\pm}=c_{i}\pm\frac{1}{2}\eps_{4}$ denote the elements in $B$ that project onto $c_{i}\in C$. Let $\Lambda=\sum s_{i,\pm}b_{i,\pm}$ be a positive integral linear combination of elements in $B$ and define $m_{i}=s_{i,+}+s_{i,-}$. Then $\pi(\Lambda)=\sum m_{i}c_{i}$ is a linear combination of elements in $C$ with coefficients in $\bbN$ and hence there is an $r\in\{0,\ldots,\Lambda_{2}+\Lambda_{3}\}$ such that $m_{1}=\Lambda_{1}-\Lambda_{3}+r,m_{2}=\Lambda_{1}-\Lambda_{2}+r,m_{3}=r$ and $m_{4}=\Lambda_{2}+\Lambda_{3}-r$. We find that $\sum_{i=1}^{4}s_{i,+}-\sum_{i=1}^{4}s_{i,-}=2\Lambda_{4}$ and $\sum_{i=1}^{4}s_{i,+}+\sum_{i=1}^{4}s_{i,-}=2\Lambda_{1}+2r$. It follows that the number of ways in which we can write $\Lambda$ as a linear combination of $2\Lambda_{1}+2r$ elements in $B$ with coefficients in $\bbN$ is equal to the number of tuples $(s_{1,+},s_{2,+},s_{3,+},s_{4,+})\in\bbN^{4}$ with $\sum_{i=1}^{4}s_{i,+}=\Lambda_{1}+\Lambda_{4}+r$ and $0\le s_{i,+}\le m_{i}$. This is the number of integral points in the intersection of the hyperrectangular $\{0\le s_{i,+}\le m_{i}\}$ and the affine hyperplane $\{s_{1,+}+s_{2,+}+s_{3,+}+s_{4,+}=\Lambda_{1}+\Lambda_{4}+r\}$ and we denote this quantity with $L((m_{1},m_{2},m_{3},m_{4}),\Lambda_{1}+\Lambda_{4}+r)$. Whenever
\begin{eqnarray}\label{ineq: B 4}
|\Lambda_{4}|\le \Lambda_{1}+\Lambda_{2}+\Lambda_{3},
\end{eqnarray}
$L((m_{1},m_{2},m_{3},m_{4}),\Lambda_{1}+\Lambda_{4}+r)>0$. Hence
$$p_{B}(\Lambda)=\sum_{r=0}^{\Lambda_{2}+\Lambda_{3}}L((\Lambda_{1}-\Lambda_{3}+r,\Lambda_{1}-\Lambda_{2}+r,r,\Lambda_{2}+\Lambda_{3}-r),\Lambda_{1}+\Lambda_{4}+r)$$
if $\Lambda_{1}\ge\Lambda_{2}\ge\Lambda_{3}$. The quantity $p_{B}(\Lambda)$ is positive if and only if the inequalities (\ref{ineq: B 1}),(\ref{ineq: B 2}),(\ref{ineq: B 3}) and (\ref{ineq: B 4}) hold. Note that these inequalities are invariant for permuting the first three coordinates of $\Lambda$.

Let $\mu=\mu_{3}\omega_{3}$ or $\mu=\mu_{4}\omega_{4}$ and let $\lambda\in P^{+}_{G}$ satisfy $q(\lambda)\in P^{+}_{M}(\mu)$ and (\ref{eqn: bottom 2}) or (\ref{eqn: bottom 3}) respectively. Define $\Gamma_{w}(\lambda,\mu)=w(\widetilde{w}\lambda+\widetilde{\rho})-(\mu+\widetilde{\rho})$. For the elements $\Gamma_{w}(\lambda,\mu)$ and $\Gamma_{w}(\lambda-\lambda_{\sph},\mu)$ we check the inequalities (\ref{ineq: B 1}),(\ref{ineq: B 2}),(\ref{ineq: B 3}) and (\ref{ineq: B 4}). We get 12 Weyl group elements for which $p_{B}(\Gamma_{w}(\lambda,\mu))$ and $p_{B}(\Gamma_{w}(\lambda-\lambda_{\sph},\mu))$ are possibly non-zero. Moreover, the twelve elements are the same for $\mu=\mu_{3}\omega_{3}$ and $\mu=\mu_{4}\omega_{4}$ and we have listed them in Table \ref{table: Weyl group elements}.

\begin{table}[ht]
\begin{center}
\begin{tabular}{|c|c|c|c|c|c|c|c|c|c|c|c|} \hline
$ w_{1}  $ & $  w_{2}  $ & $  w_{3}  $ & $  w_{4}  $ & $  w_{5}  $ & $  w_{6}  $ & $  w_{7}  $ & $  w_{8}  $ & $  w_{9}  $ & $  w_{10}  $ & $  w_{11}  $ & $  w_{12}$  \\ \hline
$ e          $ & $  s_{1}   $ & $  s_{2}   $ & $  s_{3}   $ & $  s_{4}   $ & $ s_{1}s_{2}  $ & $  s_{1}s_{3}  $ & $  s_{1}s_{4}  $ & $  s_{2}s_{1}  $ & $  s_{2}s_{3}  $ & $  s_{2}s_{4}  $ & $  s_{3} s_{2}$ \\ \hline
\end{tabular}
\caption{The Weyl group elements $w$ for which $\Gamma_{w}(\widetilde{w}\lambda,\mu)$ is possibly in the support of $p_{B}$.}\label{table: Weyl group elements}
\end{center}
\end{table}
\noindent Using the explicit description of $p_{B}$ one verifies the equalities $m^{G,K}_{\lambda}(\mu)=1$ and $m^{G,K}_{\lambda-\lambda_{\sph}}(\mu)=0$.
\end{proof}


\section{The differential equations}\label{DEs}

Our goal is to define a non-trivial commutative algebra of differential operators for the matrix valued orthogonal polynomials defined in Section \ref{intro}. Let $(G,K,F)$ be a multiplicity free system from Table \ref{table: mfs} and let $\mu\in F$. Let $\lag_{c},\lak_{c}$ denote the complexifications, let $U(\lag_{c})$ denote the universal enveloping algebra of $\lag_{c}$ and let $U(\lag_{c})^{\lak_{c}}$ denote the commutant of $\lak_{c}$ in $U(\lag_{c})$.
Let $\pi_{\mu}^{K}$ be an irreducible representation of $K$ in $V_{\mu}$ and let $\dot{\pi}^{K}_{\mu}$ denote the corresponding representation of $U(\lak_{c})$. Let $I(\mu)\subset U(\lak_{c})$ denote the kernel of $\dot{\pi}^{K}_{\mu}$ and consider the left ideal $U(\lag_{c})I(\mu)\subset U(\lag_{c})$. As in \cite[Ch.~9]{Dixmier} we define
$$\bbD(\mu)=U(\lag_{c})^{\lak_{c}}/(U(\lag_{c})^{\lak_{c}}\cap U(\lag_{c})I(\mu)),$$
which is an associative algebra. In fact, $\bbD(\mu)$ is commutative because it can be embedded, using an anti homomorphism, into the commutative algebra $U(\laa_{c})\otimes\End_{M}(V_{\mu})$ (see \cite[9.2.10]{Dixmier}), which is commutative by Proposition \ref{prop: reducing to M}. The irreducible representations of $\bbD(\mu)$ are in a 1--1 correspondence with the irreducible representations of $\lag_{c}$ that contain $\dot{\pi}^{K}_{\mu}$ upon restriction, see \cite[Thm.~9.2.12]{Dixmier}.

Let $D\in U(\lag_{c})$. The $\mu$-radial part $R(\mu,D)$ is a differential operator that satisfies
\begin{eqnarray}\label{def:radial part}
R(\mu,D)(\Phi|_{T})=D(\Phi)|_{T}
\end{eqnarray}
for all functions $\Phi:G\to\End(V_{\mu})$ satisfying (\ref{trafo rule}).
Following Casselman and Mili{\v{c}}i{\'c} \cite[Thm.~3.1]{CM1982} we find a homomorphism 
$$R(\mu): U(\lag_{c})^{\lak_{c}}\to C(T)\otimes U(\lat_{c})\otimes\End(\End_{M}(V_{\mu}))$$
such that (\ref{def:radial part}) holds for all $D\in U(\lag_{c})^{\lak_{c}}$ and all $\Phi\in C^{\infty}(G,\End(V_{\mu}))$ satisfying (\ref{trafo rule}). For the two non-symmetric multiplicity free triples we have an Iwasawa-like decomposition $\lag_{c}=\lak_{c}\oplus\lat_{c}\oplus\lan^{+}$ and a map $\lan^{+}\to\lak_{c}$ onto the orthocomplement of $\lam_{c}$ in $\lak_{c}$. This map replaces $I+\theta$ in the symmetric case and is essential in the construction of $R_{\mu}$, see \cite[Lem.~2.2]{CM1982}. The homomorphism $R(\mu)$ factors through the projection $U(\lag_{c})^{\lak_{c}}\to\bbD(\mu)$ and we obtain an injective algebra homomorphism that we denote by the same symbol, $R(\mu):\bbD^{\mu}\to C(T)\otimes U(\lat_{c})\otimes\End(\End_{M}(V_{\mu}))$. We identify $\End_{M}(V_{\mu})=\bbC^{N_{\mu}}$ by Schur's Lemma with $N_{\mu}$ the cardinality of the bottom $B(\mu)$ and we write $\mathbb{M}^{\mu}=\End(\bbC^{N_{\mu}})$. The elementary spherical functions $\Phi^{\mu}_{\lambda}$ are simultaneous eigenfunctions for the algebra $\bbD(\mu)$. The differential operators $R(\mu,D)$ become differential operators for the functions $\Psi_{d}^{\mu}:T\to\mathbb{M}^{\mu}$ and, according to the construction, the functions $\Psi_{n}^{\mu}$ are simultaneous eigenfunctions for the operators $R(\mu,D)$ with $D\in\bbD(\mu)$. The eigenvalues are diagonal matrices $\Lambda_{n}(D)\in\mathbb{M}^{\mu}$ acting on the right, i.e.~we have $R(\mu,D)\Psi^{\mu}_{n}=\Psi^{\mu}_{n}\Lambda_{n}(D)$.

In the forthcoming paper \cite{van Pruijssen Roman} it is shown that the function $\Psi^{\mu}_{0}:T\to\mathbb{M}^{\mu}$ is point wise invertible on $T_{\reg}$, the open subset of $T$ on which the restriction of the minimal spherical function, $\phi|_{T}$, is regular. The proof relies on the bispectral property that is present for the family of matrix valued functions $\{\Psi^{\mu}_{n}:n\in\bbN\}$. More precisely, the interplay between the differential operators and the three term recurrence relation imply that the function $\Psi^{\mu}_{0}$ satisfies an ODE whose coefficients are regular on $T_{\reg}$. If we conjugate $R(\mu,D)$ with $\Psi^{\mu}_{0}$ and perform the change of variables $x=c\phi(t)+(1-c)$, such that $x$ runs in $[-1,1]$, then we obtain a differential operator acting on the space of matrix valued orthogonal polynomials $\mathbb{M}^{\mu}[x]$. The algebra of differential operators that is obtained in this way is denoted by $\bbD^{\mu}$. The family of matrix valued orthogonal polynomials $(P^{\mu}_{n}(x);n\in\bbN)$ that we obtain from the functions $(\Psi^{\mu}_{n};n\in\bbN)$, is a family of simultaneous eigenfunctions for the algebra $\bbD^{\mu}$. The algebra of differential operators $\mathbb{M}^{\mu}[x,\del_{x}]$ acts on $\mathbb{M}^{\mu}[x]$, where the matrices act by left multiplication. Note that $\bbD^{\mu}\subset\mathbb{M}^{\mu}[x,\del_{x}]$. 

The description of the map $R(\mu)$ in \cite{CM1982} allows one to calculate explicitly the radial part of the (order two) Casimir operator $\Omega\in U(\lag_{c})^{\lak_{c}}$. An explicit expression can be found in \cite[Prop.~9.1.2.11]{Warner} for the case where $(G,K)$ is symmetric. The image of $\Omega$ in the algebra $\bbD^{\mu}$ is denoted by $\Omega^{\mu}$ and is of order two. Its eigenvalues can be calculated explicitly in terms of highest weights and they are real, which implies that $\Omega^{\mu}$ is symmetric with respect to the matrix valued inner product $\langle\cdot,\cdot\rangle_{W^{\mu}}$. These are examples of matrix valued hypergeometric differential operators \cite{Tirao}.


\section{Conclusions}

Several questions remain. We have shown the existence of families of matrix valued orthogonal polynomials, together with a commutative algebra of differential operators for which the polynomials are simultaneous eigenfunctions, mainly by working out the branching rules. The key result is that the bottom of the $\mu$-well is well behaved with respect to the weights of the fundamental spherical representation, so that the degree function has the right properties. It would be interesting to see whether one can draw the same conclusions by investigating of the differential equations for the matrix valued orthogonal polynomials. This would require more precise knowledge of the algebra $\bbD(\mu)$.

On the other hand, it would be interesting to investigate whether the good properties of the degree function follow from convexity arguments that come about if we formulate matters concerning the representation theory, such as induction and restriction, in terms of symplectic or algebraic geometry. For example, in this light, it is interesting to learn more about the (spherical) spaces $G_{c}/Q$ and their $G_{c}$-equivariant line bundles, where $Q\subset K_{c}$ is the parabolic subgroup associated to $F$, for a multiplicity free system $(G,K,F)$.

The existence of multiplicity free systems $(G,K,F)$ with $(G,K)$ a Gelfand pair of rank $>1$, raises the question whether the spectra of the induced representations have a similar structure as in the rank one case. If the answer is affirmative we expect that we can associate families of matrix valued orthogonal polynomials in several variables to these spectra, together with commutative algebras of differential operators that have these polynomials as simultaneous eigenfunctions. For the examples $(\Spin(9),\Spin(7),\bbN_{\omega_{1}})$ and $(\SU(n+1)\times\SU(n+1),\diag(\SU(n+1)),F)$, where $F=\omega_{1}\bbN$ or $F=\omega_{n}\bbN$, this seems to be the case. In general the branching rules will not be of great help in understanding the bottom of the $\mu$-well, as they soon become too complicated in the higher rank situations.


Gert Heckman, Radboud Universiteit Nijmegen, IMAPP, P.O.~Box 9010, 6500 GL Nijmegen, The Netherlands (\verb|g.heckman@math.ru.nl|).

Maarten van Pruijssen, Universit{\"a}t Paderborn, Institut f{\"u}r Mathematik, Warburger Str. 100, 33098 Paderborn, Germany (\verb|vanpruijssen@math.upb.de|).


\begin{thebibliography}{50}

\bibitem{Baldoni Silva}
M.W. Baldoni Silva, Branching theorems for semisimple Lie groups of real rank one,
Rend. Sem. Univ. Padova 61 (1979), 229-250.                                                                                           

\bibitem{Bochner}
S. Bochner, \"{U}ber Sturm--Liouvillesche Polynomsysteme, Mathematisch Zeitschrift 29 (1929), 730-736.

\bibitem{Bourbaki}
N. Bourbaki, Groupes et Alg\`{e}bres de Lie, Chapitres 4,5 et 6, Masson, Paris, 1981.

\bibitem{Brion1}
M. Brion, Classification des espaces homog\`{e}nes sph\'{e}riques, Compositio Math. 63:2 (1987), 189-208.

\bibitem{Camporesi1}
R. Camporesi, The Helgason Fourier transform for homogeneous vector bundles over compact Riemannian symmetric spaces--the local theory,
J. of Funct. Analysis 220 (2005), 97-117.

\bibitem{Camporesi2}
R. Camporesi, A generalization of the Cartan--Helgason theorem for Riemannian symmetric spaces of rank one,
Pacific J. Math. 222:1 (2005), 1-27.

\bibitem{CM1982}
W. Casselman and D. Mili{\v{c}}i{\'c}, Asymptotic behavior of matrix coefficients of admissable representations, Duke Math. J. 49:4(1982), 869--930.

\bibitem{Dixmier}
J. Dixmier, Alg{\`e}bres envelloppantes, {\'E}ditions Jacques Gabay, Paris, 1996.

\bibitem{Duran}
A.J. Duran, Matrix inner product having a matrix second order differential operator,
Rocky Mountain J. of Math. 27:2 (1997), 585-600.

\bibitem{Duran--Grunbaum}
A.J. Duran and F.A. Gr\"{u}nbaum, A Survey of orthogonal matrix polynomials satisfying
second order differential equations, Journal of Computational Applied Mathematics 178 (2005), 169-190.

\bibitem{Fulton--Harris}
W. Fulton and J. Harris, Representation Theory, Graduate Texts in Mathematics 129, Springer, New York, 1991.

\bibitem{Geronimo}
J.S. Geronimo, Scattering theory and matrix orthogonal polynomials on the real line,
Circuits Systems Signal Process 1 (1982), 261-180.

\bibitem{Grunbaum--Pacharoni--Tirao1}
F.A. Gr\"{u}nbaum, I. Pacharoni and  J.A. Tirao,
Matrix valued spherical functions associated to the complex projective plane, J. Funct. Anal. 188 (2002), 350-441.

\bibitem{Grunbaum--Pacharoni--Tirao2}
F.A. Gr\"{u}nbaum, I. Pacharoni and  J.A. Tirao,
Matrix valued orthogonal polynomials of Jacobi type: the role of group representation theory,
Annales de l' institut Fourier 55:6 (2005), 2051-2068.

\bibitem{Grunbaum--Tirao}
F.A. Gr\"{u}nbaum and  J.A. Tirao,
The Algebra of Differential Operators Associated to a Weight Matrix, Integr. Equ. Oper. Theory 58 (2007), 449-475.

\bibitem{Guillemin--Sternberg1}
V. Guillemin and S. Sternberg, Geometric Quantization and
Multiplicities of Group Representations, Invent. Math. 67 (1982), 515-538.

\bibitem{Guillemin--Sternberg2}
V. Guillemin and S. Sternberg, Convexity Properties of the Moment Map II,
Invent. Math. 77 (1984), 533-546.

\bibitem{Guillemin--Sternberg3}
V. Guillemin and S. Sternberg, The Frobenius Reciprocity Theorem from a
Symplectic Point of View, Lecture Notes Math. 1037 (1983), 242-256.

\bibitem{Guillemin--Sternberg4}
V. Guillemin and S. Sternberg, Multiplicity Free Spaces,
J. Differential Geometry 19 (1984), 31-56.

\bibitem{He et al}
X. He, K. Nishiyama, H. Ochiai, Y. Oshima, On orbits in double flag varieties for symmetric spaces, arXiv1208.2084v1.

\bibitem{Heckman}
G.J. Heckman, Projections of Orbits and Asymptotic Behavior of Multiplicities
for Compact Connected Lie groups, Invent. Math 67 (1982), 333-356.

\bibitem{Humphreys}
J.E. Humphreys, Introduction to Lie Algebras and Representation Theory,
Graduate Texts in Mathematics 9, Springer, New York, 1972.

\bibitem{Kirwan}
F.C. Kirwan, Convexity Properties of the Moment Map III,
Invent. Math. 77 (1984), 547-552.

\bibitem{Kitagawa}
M.~Kitagawa, Stability of Branching Laws for Highest Weight
Modules, arXiv:1307.0606.

\bibitem{Knapp}
A.W.Knapp, Lie Groups Beyond an Introduction, Second Edition,
Progress in Mathematics 140, Birkh\"auser, Boston, 2002.

\bibitem{Knop and Van Steirteghem}
F. Knop, B. Van Steirteghem, Classification of smooth affine spherical varieties, Transformation Groups, Vol. 11, No. 3, (2006),495--516.

\bibitem{Koelink--van Pruijssen--Roman1}
E. Koelink, M. van Pruijssen and P. Rom\'{a}n,
Matrix Valued Orthogonal Polynomials related to $(\SU(2)\times\SU(2),\mathrm{diag})$, Part I,
Int Math Res Notices (2012) 2012 (24): 5673-5730. 

\bibitem{Koelink--van Pruijssen--Roman2}
E. Koelink, M. van Pruijssen and P. Rom\'{a}n,
Matrix Valued Orthogonal Polynomials associated to $(\SU(2)\times\SU(2),\SU(2))$, Part II,
Publ. RIMS Kyoto Univ. 49 (2013), 271--312.

\bibitem{Koornwinder}
T.H. Koornwinder, Matrix elements of irreducible representations of $\SU(2)\times\SU(2)$
and vector-valued orthogonal polynomials, SIAM J. Math. Anal. 16 (1985), 602--613.

\bibitem{Kostant}
B. Kostant, A branching law for subgroups fixed by an involution and a noncompact analogue of the Borel--Weil theorem,
291-353, Progress in Math. 220, Birkh\"{a}user, Boston, 2004.

\bibitem{Kramer}
M. Kr\"{a}mer, Sph\"{a}rische Untergruppen in kompakten zusammenh\"{a}ngen-den Liegruppen,
Compositio Math. 38 (1979), 129--153.

\bibitem{Krein1}
M.G. Krein, Infinite $J$-matrices and a matrix moment problem, Dokl. Akad. Nauk, SSSR 69:2 (1949), 125--128.

\bibitem{Krein2}
M.G. Krein, Fundamental aspects of the representation theory of Hermitian operators with deficiency index $(m,m)$,
AMS Translations, Series 2 97 (1971), 75--143.

\bibitem{Lepowsky}
J. Lepowsky, Multiplicity formulas for certain semisimple Lie groups, Bull. Amer. Math. Soc. 77 (1971),601--605.

\bibitem{McKay--Patera}
W.G. McKay and J. Patera, Tables of Dimensions, Indices and Branching Rules for Representations of Simple Lie Algebras,
Marcel Dekker, New York, 1981.

\bibitem{Pacharoni--Tirao}
I. Pacharoni and J.A. Tirao, Matrix Valued Orthogonal Polynomials Arising from the Complex Projective Space,
Constr. Approx. 25 (2007), 177--192.

\bibitem{van Pruijssen}
M. van Pruijssen, Matrix Valued Orthogonal Polynomials related to Gelfand Pairs of Rank One, PhD Nijmegen, 2012.

\bibitem{van Pruijssen Roman}
M. van Pruijssen, P. Rom{\'a}n, Algorithm paper, work in progress.

\bibitem{Szego}
G. Szeg\"{o}, Orthogonal Polynomials, American Mathematical Society Colloquium Publications 23, New York, 1959.

\bibitem{Tirao}
J. A. Tirao, The matrix-valued hypergeometric equation, PNAS 100:14 (2003), 8138--8141.

\bibitem{Vogan}
D.A. Vogan, Lie algebra cohomology and a multiplicity formula of Kostant, J. Algebra 51 (1978), 69--75.

\bibitem{Wallach}
N. R. Wallach, Harmonic Analysis on Homogeneous Spaces, Marcel Dekkers, New York, 1973.

\bibitem{Warner}
G. Warner, Harmonic analysis on semi-simple Lie groups II, Springer--Verlag, New York-- Heidelberg, 1972.  

\bibitem{Wang}
H.-C. Wang, Two-point Homogeneous Spaces, Annals of Math. 55:1 (1952), 177--191.

\bibitem{Mathematica}
Wolfram Research, Inc., Mathematica, Version 5.1, Champaign, IL (2005).

\end{thebibliography}
\end{document}